\documentclass[11 pt]{amsart}
\usepackage{ color, graphicx}
\usepackage{amssymb,amsmath,amsthm}
\usepackage{mathrsfs, dsfont,latexsym}
\usepackage{paralist}

\evensidemargin 0.0in \oddsidemargin 0.0in \textwidth 6.5in
\topmargin  -0.2in \textheight  9.0in \overfullrule = 0pt
\usepackage{hyperref}

\theoremstyle{plain}

\newtheorem{proposition}{Proposition}[section]
\newtheorem{claim}[proposition]{Claim}
\newtheorem{lemma}[proposition]{Lemma}

\newtheorem{remark}[proposition]{Remark}
\numberwithin{equation}{section}
\setcounter{footnote}{1}

\newtheorem{cor}{Corollary}[section]

\newcommand{\NN}{\mathbb{N}}

\newcommand{\RR}{\mathbb{R}}

\newcommand{\p}{\partial}

\newcommand{\De}{\Delta}

\newcommand{\SCB}{\mathscr{B}}
\newcommand{\SCF}{\mathscr{F}}
\newcommand{\SE}{\mathscr{E}}

\newcommand{\CA}{\mathcal{A}}

\newcommand{\CD}{\mathcal{D}}
\newcommand{\CE}{\mathcal{E}}
\newcommand{\CH}{\mathcal{H}}

\newcommand{\CF}{\mathcal{F}}

\newcommand{\CR}{\mathcal{R}}

\newcommand{\half}{\frac{1}{2}}
\newcommand{\D}{{D}}

\newcommand{\CN}{\mathcal{N}}
\newcommand{\SD}{\mathscr{D}}

\newcommand{\CK}{\mathcal{K}}

\newcommand{\CX}{\mathcal{X}}

\newcommand{\CU}{\mathcal{U}}

\newcommand{\CS}{\mathcal{S}}
\newcommand{\TOM}{\Omega}
\newcommand{\TCS}{\mathcal{S}}

\newcommand{\W}{\tilde{W}}

\newcommand{\intl}{\int\limits}
\newcommand{\lf}{\left}
\newcommand{\rt}{\right}
\newcommand\<{\langle}\renewcommand\>{\rangle}

\newcommand\nc{\newcommand}
\nc\hd{\widehat{D}}
\nc\be{\begin{equation}}
\nc\ee{\end{equation}}
\nc\kp{\kappa}
\nc\8{\infty}

%


\DeclareMathSymbol{\Gamma}{\mathalpha}{letters}{"00}
\DeclareMathSymbol{\Theta}{\mathalpha}{letters}{"02}
\DeclareMathSymbol{\Lambda}{\mathalpha}{letters}{"03}
\DeclareMathSymbol{\Omega}{\mathalpha}{letters}{"0A}
\definecolor{cr}{rgb}{1,0,0}

\newtheorem{thm}{Theorem}[section]
\newtheorem{lem}[thm]{Lemma}
\newtheorem{prop}[thm]{Proposition}

\newtheorem{df}[thm]{Definition}
\newtheorem{rem}[thm]{Remark}

\numberwithin{equation}{section}

\title{Global existence for capillary water waves}
\author{P. Germain, N. Masmoudi, J. Shatah}
\email{pgermain@cims.nyu.edu, masmoudi@cims.nyu.edu, shatah@cims.nyu.edu}
\thanks{{\it{2010 Mathematics Subject Classification}}: 35B40, 37L50, 76D03, 76D33, 76D45}
\keywords{Water waves, capillarity, global existence, space-time resonances.}
\thanks{P. Germain is partially supported by NSF grant DMS-1101269 and a start-up grant from the Courant Institute.}
\thanks{N. Masmoudi is partially supported by NSF grant DMS-1211806 grant.}
\thanks{J. Shatah is partially supported by NSF grant DMS-1001674 grant.}

\begin{document}

\maketitle


\newcommand\eps{\varepsilon}

\section*{Abstract}
Consider the capillary water waves equations, set in the whole space with infinite depth, 
and consider small data (i.e. sufficiently close to zero velocity, and constant height of the water).
We prove global existence and scattering. The proof  combines  in a novel way the energy method
with a cascade of energy estimates, the space-time resonance method and commuting  vector fields. 
 
\section{Introduction}

In this manuscript we consider the global existence and asymptotic behavior of surface waves for an 
irrotational, incompressible, and inviscid  fluid in the presence of surface tension.  The fluid velocity is given by Euler's equation in a domain $\CU$:
 \[
\CU=\bigcup_t\SD_t, \quad   \SD = \SD_t  = \{ (x,z) = (x_1,x_2,z) \in \mathbb{R}^3 ,  z\leq h(x,t) \},  
\]
and the free boundary of the fluid at time $t$
\[
\SCB=\SCB_t= \{ (x,h(x,t)),  x \in \mathbb{R}^2\}=\p\SD
\]
moves by the normal velocity  of the fluid.  The surface tension is assumed to be proportional (by the coefficient $c$) to the mean curvature $\kappa$ of $\SCB$ and we neglect the presence of gravity.
In this setting the Euler equation for the fluid velocity $v$,  and the  boundary conditions are  given by


\begin{subequations}\label{CW}
\begin{align}
&\begin{cases}
\D_tv\stackrel{def}{=}\p_t v + \nabla_v v = -\nabla p  \quad  (x, z) \in \SD,\\
\nabla \cdot v =0 \quad (x, z) \in \SD,\\
\end{cases}\\
&
\begin{cases}
\p_t h+ \nabla_v (h-z)=0 \quad (x,z)\in \SCB,\\
p= c\kappa,  \quad (x,z)\in \SCB.
\end{cases}
\end{align}
\end{subequations}
where $\nabla =(\p_1, \p_2,\p_z)$.  Since the flow is assumed to be irrotational, the Euler equation can be reduced to an equation  on the boundary and thus the system of equations (E--BC) reduces to a system 
defined on $\SCB$. This is achieved by introducing the potential $\psi_{\mathcal{H}}$ where $v= \nabla \psi_{\mathcal{H}}$. Denoting the trace of the potential on the free 
boundary by $\psi(x,t) = \psi_{\mathcal{H}} (x,h(x,t),t)$,  the system of equations for $\psi$ and $h$ are (see for instance~\cite{SuSu})
\begin{equation}\label{ww}
\tag{WW}
\left\{
\begin{array}{l} \partial_t h = G(h) \psi \\
\partial_t \psi = c \kappa -\frac{1}{2} |\p \psi|^2 + \frac{1}{2(1+|\p h|^2)} \left(G(h) \psi + \p h \cdot \p \psi \right)^2 \\
(h,\psi)(t=0) = (h_0,\psi_0).  \end{array}
\right.
\end{equation}
where 
$$
G(h) \psi = \sqrt{1 + |\p h|^2} \mathcal{N}(h) \psi \sim \Lambda \psi + \hbox{(quadratic)},
$$
$\mathcal{N}$ being the Dirichlet-Neumann operator associated with $\SD$; $\p= (\p_1,\p_2)$;
$\Lambda = |\p|$; and where  the mean curvature can be expressed as
$$
\kappa = \frac{1}{2} \p \cdot \left( \frac{\p h}{\sqrt{1+|\p h |^2}} \right) \sim 
  \frac12  \Delta h + \hbox{(cubic)}.
$$  
In the sequel we take $c=2$ for simplicity.
The dispersive nature of \eqref{ww} is revealed by writing the linearization of this system around $(h,\psi)=(0,0)$:
\begin{equation} \label{Lin}
\left\{
\begin{array}{l} \partial_t h = \Lambda \psi, \\
\partial_t \psi = \Delta h,
\end{array} \right. 
\end{equation}
where $\mathfrak{R}_i$ are  at least quadratic in $(h,\psi)$. By setting
$$
u \overset{def}{=} \Lambda^{1/2} h + i \psi \;\;\;\mbox{and}\;\;\; u_0 \overset{def}{=} \Lambda^{1/2} h_0 + i \psi_0,
$$
the above system can be written as a single equation
\begin{equation}\label{WWlin}
\left\{
\begin{array}{l}
\partial_t u = - i \Lambda^{3/2} u +  \mathfrak{R} \\
u(t=0) = u_0.
\end{array}
\right.,
\end{equation} 
where $\mathfrak{R} = \Lambda^\half\mathfrak{R_1}+ i \mathfrak{R_2}$

\subsection{Known results}

A great number of works has been devoted to local well-posedness of the water wave problem. These works consider the problem with gravity, capillarity, or both; 
it is furthermore possible to allow domains of finite depth. The small data problem was first addressed by Nalimov~\cite{NA74} 
(see also H.~Yoshihara \cite{YO82}), but the first breakthrough in solving the local well-posedness problem with general data is due to Wu~\cite{WU97,WU99}. 
There are many other works on local well-posedness: we mention in particular Craig \cite{CA85}, 
Christodoulou and Lindblad \cite{CL00}, Lannes \cite{LA05}, Coutand and Shkoller \cite{CS05}, Ambrose and Masmoudi \cite{AM05,AM07,AM09}, Shatah and Zeng \cite{SZ06},
Alzazard, Burq and Zuily~\cite{ABZ}.

In connection with the local regularity problem, Christianson, Hur and Staffilani~\cite{CHS} and Alazard, Burq and Zuily~\cite{ABZ2} have been able to prove recently
a nonlinear smoothing effect.

Fewer works address the global evolution problem; all results are then restricted to small data.
The first progress in this direction is due to Wu~\cite{Wu0}, who proved almost global existence of gravity waves in dimension 2. 
The authors of the present paper~\cite{GMS3} could then prove global existence of gravity water waves in dimension 3. A very similar result was obtained later
by Wu~\cite{Wu}, using a different proof. We shall come back later to the methods of proof employed.

Another line of research has been concerned with traveling waves. We focus on fully localized traveling waves in dimension 3
(that is, two-dimensional interface). Such waves have been constructed recently by Groves and Sun~\cite{GS}, and Buffoni, Groves, Sun and Wahl\'en~\cite{BGSW}. The setting
is that of gravity-capillary waves of finite depth: denote respectively $c$, $g$ and $h$ for the capillarity coefficient, gravity, and the depth of the fluid,
and define the Bond number $\beta = \frac{c}{g h^2}$. The aforementioned works show that if $\beta > \frac{1}{3}$, traveling waves of arbitrarily small size in $L^2$ exist. 
This should be contrasted with our main theorem here: we prove that for $c>0$, $g=0$, $h=\infty$, small data lead to scattering for large time. In our main theorem however, 
smallness is expressed in weighted $L^2$ spaces, and it is not clear to which weighted $L^2$ spaces the traveling waves of Buffoni, Groves, Sun and Wahl\'en belong.  For a more general account of traveling waves, we refer to the recent book of Constantin~\cite{Constantin}.

\subsection{Main theorem}

To state our main result we need to introduce the following notation: let
$$
\p \overset{def}{=} (\partial_1,\partial_2), \;\;\;\;   \Omega \overset{def}{=} x^1 \partial_2 - x^2 \partial_1=\omega^i\p_i,   \;\;\;\; 
\mbox{ and }\;\;\;\; \mathcal{S} \overset{def}{=} \frac{3}{2}t\partial_t + x^i\p_i,
$$
and let $\Gamma$ denote any of the fields $\Gamma= \mathcal{S}, \Omega$, or $\p^3$, where $\p^3=\p_1^{i_1}\p_2^{i_2}$ with $i_1 +i_2 =3$. We adopt the multiindex
notation: if $\gamma = (\gamma_1,\gamma_2,\gamma_3) \in \mathbb{N}^3$,
$$ 
\Gamma^\gamma = \mathcal{S}^{\gamma_1} \Omega^{\gamma_2} \p^{3 \gamma_3}.
$$
We shall also need the spatial part of $\mathcal{S}$
$$
\Sigma \overset{def}{=} x^i\p_i.
$$
Recall the classical Sobolev space $W^{k,p}$, whose norm reads
$$
\left\| u \right\|_{W^{k,p}} \overset{def}{=} \left\| \left( 1 + \Lambda^k \right) u \right\|_p.
$$
For $k$ a positive real number and $\ell$ an integer, define the weighted Sobolev space $W^{k,p}_\ell$ by its norm
$$
\left\| u \right\|_{W^{k,p}_\ell} \overset{def}{=} \sum_{|\gamma|\leq \ell} \left\| \Gamma^\gamma u \right\|_{W^{k,p}}.
$$
Recall that $u = \Lambda^{1/2} h + i \psi$, where $(h,\psi)$ are given by \eqref{ww}.

\begin{thm}
Assume that the initial data  $u_0$ satisfies 
\begin{equation}
\label{conditiondata}
\| \Lambda^{1/2}  u_{0} \|_{W^{9/2,2}_{2K} ( \RR^2  )} 
+ \|  \Lambda^{\alpha_*}   u_0 \|_{W^{0,2}_{2K}( \RR^2  ) } 
<  \epsilon,
\end{equation} 
where $K\geq 10$, and $\epsilon,\alpha_*> 0$ are sufficiently small.  
Then there exists a global solution $u$ of \eqref{WWlin} such that 
$ \| u\|_X \lesssim \eps $ (where the  $ \| \, \|_X$ norm is defined in section  \ref{tech}). 
Furthermore, this solution scatters, i.e. there exists a solution $u_\ell$ of the linearized problem
$$
\partial_t u_\ell = - i \Lambda^{3/2} u_\ell 
$$
such that
$$
\left\| \Lambda^{1/2} \left( u(t) - u_\ell(t) \right) \right\|_2 \rightarrow 0 \qquad \mbox{as $t\rightarrow \infty$}.
$$
\end{thm}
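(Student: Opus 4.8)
I would run a bootstrap (continuity) argument on the single quantity $\|u\|_X$, driven by three interlocking ingredients: a \emph{cascade of energy estimates} for the high Sobolev and weighted $L^2$ norms, a \emph{dispersive decay estimate} obtained by the \emph{space--time resonance method}, and the fact that the fields $\Gamma\in\{\mathcal{S},\Omega,\partial^3\}$ commute with the linearized flow. Since \eqref{ww} is quasilinear (the Dirichlet--Neumann operator $G(h)$ depends on $h$), local well-posedness at the regularity of \eqref{conditiondata} is taken from the cited works, and the first task is to expand the nonlinearity in \eqref{WWlin} as $\mathfrak{R}=Q(u,u)+(\text{cubic and higher order})$, where each quadratic piece is a bilinear Fourier multiplier with symbol $m(\xi,\eta)$ acting on a pair of factors drawn from $u$ and $\bar u$; the essential bookkeeping is the homogeneity of the symbols $m$ and, above all, their behaviour near $\eta=0$, $\xi-\eta=0$, $\xi=0$, where the capillary dispersion $|\xi|^{3/2}$ degenerates. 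The scaling field $\mathcal{S}=\frac32 t\partial_t+x^i\partial_i$ is designed so that $[\mathcal{S},\partial_t+i\Lambda^{3/2}]=-\frac32(\partial_t+i\Lambda^{3/2})$, while $\Omega$ and $\partial$ commute with $\partial_t+i\Lambda^{3/2}$ exactly; hence each $\Gamma^\gamma u$ solves the same equation up to a nonlinearity that is again a sum of such bilinear terms, with the fields distributed over the two factors by a Leibniz rule.

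\textbf{Step 1 (energy cascade).} For $|\gamma|$ up to the top order I would run energy estimates for $\Gamma^\gamma u$ at the Sobolev level of \eqref{conditiondata}. Because the system is quasilinear, this demands the usual symmetrization / good-unknown reductions so that the worst (top-order) contribution of the nonlinearity is either skew-adjoint or bounded by $\|u\|_{W^{k,\infty}}$, giving $\frac{d}{dt}\|u\|_{\text{energy}}^2\lesssim\|u\|_{W^{k,\infty}}\|u\|_{\text{energy}}^2+(\text{lower-order terms})$. Since the $\mathbb{R}^2$ dispersive decay of $e^{-it\Lambda^{3/2}}$ is only $t^{-1}$, which fails to be integrable, this yields no better than slow polynomial growth $(1+t)^{C\epsilon}$ at the top. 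The role of the cascade is to stratify the norms into tiers: the highest (most fields, most derivatives) is allowed to grow like $(1+t)^{C\epsilon}$, while strictly lower tiers are shown to remain $O(\epsilon)$, each tier being estimated using only lower tiers for the genuinely nonlinear and time-decaying factors.

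\textbf{Step 2 (decay by space--time resonances).} To supply the $t^{-1}$ decay of $\|u\|_{W^{k,\infty}}$ consumed in Step 1, I would pass to the profile $f(t)=e^{it\Lambda^{3/2}}u(t)$, whose Duhamel formula is, schematically, $\widehat f(t,\xi)=\widehat f(0,\xi)+\sum\int_0^t\!\!\int e^{is\phi(\xi,\eta)}\,m(\xi,\eta)\,\widehat{f}(s,\eta)\,\widehat{f}(s,\xi-\eta)\,d\eta\,ds+(\text{higher order})$, with phases $\phi(\xi,\eta)=|\xi|^{3/2}\pm|\eta|^{3/2}\pm|\xi-\eta|^{3/2}$, one for each sign pattern. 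The linear dispersive bound gives $t^{-1}$ decay from data placed in a suitable weighted space; the nonlinear contribution is split according to the time-resonant set $\{\phi=0\}$ and the space-resonant set $\{\nabla_\eta\phi=0\}$. Away from $\{\phi=0\}$ one integrates by parts in $s$, gaining $s^{-1}$ from $\partial_s e^{is\phi}=i\phi\,e^{is\phi}$ at the cost of $\partial_s\widehat f=\widehat{e^{is\Lambda^{3/2}}\mathfrak R}$, which is again quadratic and small; away from $\{\nabla_\eta\phi=0\}$ one integrates by parts in $\eta$, gaining $s^{-1}$ from $\nabla_\eta e^{is\phi}$ at the cost of $\eta$-derivatives of $\widehat f$, i.e. of the weighted norms of Step 1. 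The crucial computation is the geometry of the space--time resonant sets $\{\phi=0\}\cap\{\nabla_\eta\phi=0\}$ for each sign interaction arising from the water-wave nonlinearity: one must show these are low-dimensional, establish quantitative transversal non-degeneracy of $\phi$ and $\nabla_\eta\phi$, and carefully treat the low- and high-frequency regions where $|\xi|^{3/2}$ and the symbols $m$ are singular — precisely where the weights $\Lambda^{1/2}$ and $\Lambda^{\alpha_*}$ in \eqref{conditiondata} are needed. The slow growth $(1+t)^{C\epsilon}$ of the weighted norms is tolerable because the resonant contributions come with an extra gain from the smallness of the resonant set.

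\textbf{Closing the argument and scattering.} Steps 1--2 together improve the a priori bound, so a standard continuity argument gives $\|u\|_X\lesssim\epsilon$ on the whole interval of existence and hence global existence. For scattering, these bounds make $\partial_t f=e^{it\Lambda^{3/2}}\mathfrak R$ integrable in time into $\dot H^{1/2}$ — again one cannot merely place $\mathfrak R$ in $L^2$, which would only yield $t^{-1+C\epsilon}$, but must re-use the non-resonant integration by parts to recover the missing power — so $\Lambda^{1/2}f(t)$ converges in $L^2$ to some $\Lambda^{1/2}f_\infty$, and $u_\ell(t):=e^{-it\Lambda^{3/2}}f_\infty$ is the asserted asymptotic linear solution. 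The main obstacle is Step 2: carrying out the full space--time resonance analysis for the capillary dispersion relation $|\xi|^{3/2}$ — locating and estimating the resonant sets for every sign interaction, taming the genuine singularities of the phase and of the symbols at zero and infinite frequency, and doing all of this using only the slowly growing weighted energies produced by the quasilinear energy cascade as input.
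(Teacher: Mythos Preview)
Your proposal is essentially correct and follows the same architecture as the paper: a bootstrap on $\|u\|_X$, a cascade of weighted energy estimates with slow polynomial growth, and a space--time resonance analysis of the Duhamel integral for the profile $f=e^{it\Lambda^{3/2}}u$ to obtain the uniform $L^2$ bound that feeds the $t^{-1}$ dispersive decay. Two points are worth sharpening. First, for the energy step the paper does not use a generic symmetrization or good-unknown reduction on \eqref{ww}, but rather the geometric formulation of Shatah--Zeng: one derives a second-order equation $D_t^2\kappa=\Delta_{\mathscr B}\mathcal N\kappa+R_0$ for the mean curvature and commutes $\mathcal S,\Omega,\mathcal A=\Delta_{\mathscr B}\mathcal N$ with it; the cascade is then $F_j(t)\lesssim\epsilon^2 t^{2(j+1)\delta}$, i.e.\ \emph{every} energy tier grows, with rate increasing in the number of vector fields, rather than the lower tiers staying $O(\epsilon)$ as you wrote. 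Second, and more importantly, in Step~2 you should not rely merely on ``smallness of the resonant set'': the decisive mechanism for the worst interaction ($+-$, with $\mathcal R_{+-}=\{\xi=0\}$) is a \emph{null structure}, namely that the quadratic symbol $m_{+-}(\xi,\eta)$ vanishes to sufficient order at $\xi=0$ (concretely $m_{\pm\pm}\in\mathcal M^{2,3/2,1,1}$), so that after the $\eta$-integration by parts the new symbol $\frac{m\,\partial_\eta\phi}{|\partial_\eta\phi|^2}$ remains in a good class. Without this symbol cancellation the $+-$ term would not close.
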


\begin{remark}
Consider $h_0 \in \mathcal{S}(\mathbb{R}^2)$ and $v_0 \in \mathcal{S}(\mathcal{D}_0)$ such that $\operatorname{curl} v = \operatorname{div} v = 0$. 
Then the data $\mu (h_0,v_0)$ satisfy the hypotheses of the theorem if $\mu$ is small enough.  However note that if  we took  $\alpha_* =0$ in the theorem above, namely that 
$ \|    u_0 \|_{W^{0,2}_{2K}( \RR^2  ) } 
<  \epsilon $ 
then  $\mu (h_0,v_0)$ would not satisfy the hypotheses of the theorem unless 
 a moment condition is satisfied by   $v_0$. 
Thus  by requiring  $\alpha_*>0$ we allow  a larger class of data at the expense of complicating  the  proof, since such a condition on the initial data  makes $\p h$ 
decay at a polynomial rate slower than $1/t$ (even at the linear level!).
\end{remark}
 
The proof of this result is based on combining  in a new way  1) the vector field method (see Klainerman \cite{Klainerman}) which is based on invariances of the equation, 2) A cascade of energy 
estimates,  3) dispersive bounds  and  4) 
the space time resonance method \cite{GMS1,GMS3}, which is based on resonant interactions of waves.  Below we give a brief and {\it simplified} description of the ideas of the proof. 

\subsection{Discussion of the difficulties and the method} 

There are several difficulties that distinguish this problem from the gravity water waves problem, namely:

\paragraph{ \it 1) Low frequencies} The group velocity for  capillary waves is given by $\frac32|\xi|^{1/2}$ while  for  gravity water waves it is given by
 $\frac12|\xi|^{-1/2}$.   This makes high frequency capillary waves disperse faster than gravity waves, but it also causes low frequency waves to decay very slowly.  
For the long time behavior the low rate of dispersion of low frequencies is more problematic.  In particular, in our setting, the quantity $\| \nabla h \|_{L^\infty}$ decays 
at a polynomial rate slower than $1/t$. This slow decay combined with the commutation properties of the 
vector fields causes a cascading growth on the weighted energy 
estimates.

\paragraph{\it  2) Quadratic resonances}  A second difficulty  is linked with the fact that $\xi \to |\xi|^{3/2}$ is convex rather than concave as in the gravity water waves 
problem.   Hence there are non trivial (time) quadratic resonances. This was not the case for the gravity water waves.

\paragraph{\it 3) Weighted energy estimates}  In this work, we chose to estimate $u$ in weighted energy spaces given by the invariances of the equation, rather than estimating, 
say $xf$ in $L^2$. This gives better control, since the invariant vector fields have better commuting properties than commuting  $x$ in the equation of the profile. The presence 
of quadratic time resonances in this problem also makes bounding $xf$ in $L^2$ a delicate question.

Through the use of vector fields, we actually  integrate the vector fields method and the space time resonance method to prove our result.   The steps in our proof are 
reminiscent of the proof of global existence in three dimensions for nonlinear wave equations satisfying the null condition by 
Klainerman~\cite{Klainerman2}, see also Sogge~\cite{Sogge}. In these works, the null condition is utilized via a {\it{pointwise}} bound for the quadratic term of the type 
(we denote $Q$ for the quadratic term, and $\Gamma$ for the vector fields) $|Q(u,u)| \leq \frac{C}{t} |\Gamma u| |\Gamma u|$. This inequality in turn is deduced from 
an algebraic identity.   For our problem such an algebraic identity  does not exists for quadratic terms. However by analyzing the space time resonant frequencies and the 
null structure, all cancellations can be taken into account and this makes it possible for us to close our argument.  Another difference is that here we need to use  a cascade of energy estimates 
with controlled growth bounds. 
 
\section{Sketch of the proof}

\subsection{Ideas of the proof}
\subsubsection*{1) \it Weighted Energy estimates} 
The basic estimate is the conservation of energy 
$$ 
\int_\SD |v|^2 \,dxdz +  \int_{\mathbb{R}^2} (\sqrt{1 + |\p h|^2} -1) \,dx 
   = \int_{\mathbb{R}^2} \psi G(h) \psi \,dx +
  \int_{\mathbb{R}^2} (\sqrt{1 + |\p h|^2} -1) \,dx
$$ 
which is a consequence of invariance under time translation. 
Higher energy estimates are derived by using the geometric structure of the problem, as was done in \cite{SZ06}.  This essentially  propagates the regularity of  $u$ at a rate of $3/2$ spatial derivatives at a time, which can be seen from the model linear problem  $ \p_tu =- \Lambda^{\frac 32} u$. To simplify our presentation we elect to propagate the spatial regularity at twice that rate, i.e., $3$.

Since weighted energy estimates are derived from invariances of the equation \cite{Klainerman}, we note that ransformations that keep \eqref{ww} invariant are:
\begin{inparaenum}[\itshape a\upshape)]
\item {\it space translations}, $(h(t,x),\psi(t,x))\mapsto (h(t,x+\delta),\psi(t,x+\delta))$ for  $\delta\in\RR^2$;
\item {\it space rotations}, $(h(t,x),\psi(t,x))\mapsto (h(t,R_\theta x),\psi(t,R_\theta x))$ for any $\theta$, where $R_\theta$ is the rotation of angle $\theta$ around the origin;  and
\item {\it scaling},  $(h(t,x),\psi (t,x)) \mapsto \left(\frac{1}{\lambda} h (\lambda^{3/2} t,\lambda x),\frac{1}{\lambda^{1/2}}\psi(\lambda^{3/2}t,\lambda x) \right)$ for any 
$\lambda >0$.
\end{inparaenum} The vector fields associated with  these transformations  are given by $\p$,  $\Omega$, and  $\mathcal{S}$.

Since these vector fields are  derived  from the symmetries of the equation, they have good commutation properties. Combining them with the physical energy, this naturally leads to the weighted Sobolev spaces defined above. We show that the norm with the highest number of derivatives or vector spaces grows slowly in time (like a small power of $t$):
$\| \Lambda^{1/2}  u_{0} \|_{W^{9/2,2}_{2K} ( \RR^2  )} 
+ \|  \Lambda^{\alpha_*}   u_0 \|_{W^{0,2}_{2K}( \RR^2  ) } \lesssim t^{\delta'}$.

These energy estimates are performed in Section~\ref{sectionenergy}.

\subsubsection*{\it 2)  Dispersive bound} The weighted energy estimate relies crucially on the decay of $\Lambda^{1/2+\alpha}u$ in $W^{5,\infty}_K$ at the rate $1/t$. 
It is obtained from the  linear estimate
$$
\left\| \Lambda^{1/2} e^{-it\Lambda^{3/2}} f \right\|_{L^\infty} \lesssim \frac 1t  \sum_{|k|\leq 3} \left\| Y(\p) \Sigma \Omega^k f \right\|_{L^2},
$$
where $Y(\p) = |\p|^{\iota} + |\p|^{-\iota}$ for arbitrary small $\iota$, derived in  Appendix~\ref{sectionlindecay}.  

\subsubsection*{3) Space-time  resonances}  For the above linear estimate to yield the desired decay of $\Lambda^{1/2+\alpha} u$ in $W^{5,\infty}_K$ at the rate $1/t$, 
it is easily seen that a uniform in time bound on $\Lambda^\alpha u$ in $W^{7,2}_{4+K}$ suffices. This is obtained  by the method of space-time resonances; 
the key analytic property of $(WW)$ being the vanishing of the quadratic interactions at the space time resonant waves. We briefly explain this below.
\label{QR}

The space-time resonances method identifies the wave interactions which lead to significant contributions to the long time dynamics of solutions. It also presents 
an algorithmic method of how to deal with these interactions. It was introduced by the authors
and proved very efficient in dealing with the global existence problem for a variety of dispersive equations: nonlinear Schr\"odinger~\cite{GMS1}~\cite{GMS2}, nonlinear
Klein-Gordon~\cite{PG2}, gravity water waves~\cite{GMS3}, Euler-Maxwell~\cite{GM}.
 We  refer the reader  to~\cite{PG1} for a  comprehensive  presentation of the method. 
 
For \eqref{ww}  the most significant interaction comes from the quadratic terms which 
can be computed explicitly by using  the expansion for $G(h)$ contained in Sulem and Sulem~\cite{SuSu}
$$
\left\{ \begin{array}{l} \partial_t h = \Lambda \psi - \nabla \cdot(h \nabla \psi) - \Lambda (h \Lambda \psi) \mbox{ + cubic and higher order terms}\\
\partial_t \psi = \Delta h - \frac{1}{2} |\nabla \psi|^2 + \frac{1}{2} |\Lambda \psi|^2  \mbox{ + cubic and higher order terms}.  \end{array}\right.
$$
By writing the solution  $u = \Lambda^{1/2} h + i \psi,
$  in terms of its profile $u(t) = e^{-it\Lambda^{3/2}} f(t)$, and using Fourier transform, one quickly discovers that the worst quadratic interaction, as far as space-time 
resonances are concerned, is  $u$ with $\bar u$.  The space time resonant frequencies in this case are given by $ \mathcal{R}_{+-} = \{(\xi,\eta) ; \quad \xi= 0 \}$. 
The vanishing of these quadratic interactions seems needed, which is the case for \eqref{ww}.   These calculations are carried out in Section~\ref{sectionnonlindecay}.

\subsection{Detailed plan of the proof}

\label{tech}

Since local well-posedness is not an issue, the whole proof consists of a global a priori estimate. Define 
\begin{enumerate}
\item The energy norm
$$
\|u\|_{\operatorname{energy}} \overset{def}{=} \sup_t \sum_{j=0}^{2K} \<t\>^{-(j+1) \delta} \left\| \Lambda^{1/2} u \right\|_{W^{3(2K-j)+9/2,2}_j}
$$  
\item The decay norm
\begin{equation*}
\begin{split}
\| u\|_{\operatorname{decay}} \overset{def}{=} & \sup_t \left[ \<t\>^{-\delta'} \left\| Y(\partial)^3 \Lambda^\alpha u \right\|_{W^{9,2}_{8+K}} 
+ \left\|Y(\partial)^2 \Lambda^\alpha u \right\|_{W^{7,2}_{4+K}} \right] \\
& + \sup_t \sup_{0 \leq \beta \leq 1/2} \left[  \<t\>^{1-\delta'-\frac{2}{3} \beta} \left\| Y(\partial)^3 \Lambda^{\frac{1}{2} + \alpha - \beta} u \right\|_{W^{7,\infty}_{4+K}}
+ \<t\>^{1-\frac{2}{3} \beta} \left\| Y(\partial) \Lambda^{\frac{1}{2} + \alpha - \beta} u \right\|_{W^{5,\infty}_K} \right].
\end{split}
\end{equation*}
\item The global norm
$$
\|u\|_{X} \overset{def}{=} \|u\|_{\operatorname{energy}} + \| u\|_{\operatorname{decay}}
$$
\end{enumerate}

We can now define all the parameters appearing in the proof:
\begin{itemize}
\item $K \geq 10$ controls the number of derivatives (see~(\ref{conditiondata})).
\item $\iota << \alpha,\epsilon $: auxiliary parameter appearing in $Y(\xi)$, used when summing dyadic pieces.
\item $\alpha_* <<1$: (fractional) number of derivatives of $u_0$ in $L^2$ for small frequencies (see~(\ref{conditiondata})).
\item $\alpha = \alpha_* + 3 \iota<<1$: used in the definition of $\|u\|_{\operatorname{energy}}$ and $\|u\|_{\operatorname{decay}}$.
\item $\epsilon << 1$: controls the size of the initial data in $X$ (see~(\ref{conditiondata})).
\item $R >> 1$: large parameter which we do not set yet. It will be such that the solution satisfies $\|u\|_X < R \epsilon$.
\item $\delta = \sup (C_0 R \epsilon, \alpha)<<1$, for a big enough constant $C_0$: time growth rate, appears in the definition of $\|u\|_{\operatorname{energy}}$.
\item $\delta' = (2K+1) \delta<<1$: time growth rate, appears in the definition of $\|u\|_{\operatorname{decay}}$.
\end{itemize}

The steps in proving the global a priori estimate are as follows:

\subsubsection*{Control of the energy norm.} This is obtained in Section~\ref{sectionenergy}. There it is proved (Proposition~\ref{prop:hs} and Proposition~\ref{prop:energy})
that if the data is such that \eqref{conditiondata} holds 
and $\|u\|_X < R \epsilon$, then 
$$
\left\| u\right\|_{\operatorname{energy}} \leq C_1' \epsilon,
$$
for some constant $C_1'$ uniform in $R$ as long as $R\epsilon <<1$.

\subsubsection*{Control of the decay norm.} This is obtained in Section~\ref{sectionnonlindecay}. 
There it is proved (Proposition~\ref{eagle}) that if the data is such that \eqref{conditiondata} holds and $\|u\|_X < R \epsilon$, then
$$
\left\| u\right\|_{\operatorname{decay}} \leq C_1'' \left( \epsilon + \|u\|_X^2 \right),
$$
for some constant $C_1''$ uniform in $R$ as long as $R\epsilon <<1$.

\subsubsection*{Control of the $\|\, \|_X$ norm.} From the two previous points, we deduce that if
the data is such that \eqref{conditiondata} holds 
and $\|u\|_X < R \epsilon $, 
then
$$
\left\| u\right\|_X  \leq C_1  \left( \epsilon + \|u\|_X^2 \right),
$$
for some constant $C_1 \geq 1$ uniformly  in $R$ as long as $R\epsilon <<1$. For $\epsilon$ sufficiently small, the inequalities
$$
x \leq C_1 (\epsilon + x^2) \quad \mbox{and} \quad x \geq 0
$$
hold on two connected components, $[0,x_1]$ and $[x_2,\infty]$ with $x_1 < x_2$ and $x_1 \sim C_2 \epsilon$, for a constant $C_2$.
 Picking $R>C_2$, we get the desired estimate $\|u\|_X < R \epsilon$ by a continuity argument.

\subsection{Notations}

We adopt the following standard notations
\begin{itemize}
\item $A \lesssim B$ if $A \leq C B$ for some implicit constant $C$. The value of $C$ may change from line to line.
\item $A \sim B$ means that both $A \lesssim B$ and $B \lesssim A$.
\item If $f$ is a function over $\mathbb{R}^d$ then its Fourier transform, denoted $\widehat{f}$, or $\mathcal{F}(f)$, is given by
\footnote{In the text, we systematically drop the constants such as $\frac{1}{(2 \pi)^{d/2}}$ since they are irrelevant for our purposes.}
$$
\widehat{f}(\xi) = \mathcal{F}f (\xi) = \frac{1}{(2\pi)^{d/2}} \int e^{-ix\xi} f(x) \,dx \;\;\;\;\mbox{thus} \;\;\;\;f(x) = \frac{1}{(2\pi)^{d/2}} \int e^{ix\xi} \widehat{f}(\xi) \,d\xi.
$$
\item The Fourier multiplier with symbol $m(\xi)$ is defined by
$$
m(\frac 1i \p)f = \mathcal{F}^{-1} \left[m \mathcal{F} f \right].
$$
\item In particular, we denote ($\iota$ being a sufficiently small real number) 
$$
\Lambda \overset{def}{=} |\partial| \qquad \mbox{and} \qquad Y(\partial) \overset{def}{=} \Lambda^\iota + \Lambda^{-\iota}
$$
\item The bilinear Fourier multiplier with symbol $m$ is given by
$$T_m(f,g)(x) \overset{def}{=} \mathcal{F}^{-1} \int m(\xi,\eta) \widehat{f}(\eta) \widehat{g}(\xi-\eta)\,d\eta.$$
\item The japanese bracket $\langle \cdot \rangle$ stands for $\langle x \rangle = \sqrt{1 + x^2}$.
\end{itemize} 

\section{Energy Estimates} \label{sectionenergy}  The conserved energy of system \eqref{CW} is given by
\begin{equation}\label{en0}
E_{\rm{physical}} = \int_{\SD} |v|^2 dxdz + \int_{\SCB}1- \frac 1{\sqrt{1+ |\p h|^2}} dS
\end{equation}
which is sufficient to give $\dot H^{\frac 12}(\SCB)$ on the potential $\psi_\CH$,  and $\dot H^{1}(\SCB)$ on $h$  for small $\p h$.
Higher $H^s$ bounds and weighted norms bounds will be derived by commuting vector fields with the evolution equation of the 
mean curvature $\kappa$  of $\SCB$, and from Euler's equation evaluated on the boundary  $\p\CU$ (see \cite{SZ06} for the derivation)

\begin{subequations}\label{firstk}
\begin{align}
&\D_t \kappa =- \Delta_{ \SCB} v \cdot N -
2 \Pi \cdot (\nabla^\top  v) \label{E:dtk2}
\\[.3em]
&N\cdot \D_t v = -\CN\kappa +N\cdot \nabla \Delta_0^{-1}{\rm div} (v\cdot\nabla v)\label{seuler}\\[.3em]
&\D_t(z-h)= v_3 -D_t h =0,\label{boundary}
\end{align}
\end{subequations}
where $\D_t = \p_t +  v \cdot  \nabla \in T_{(x,t)}\p \CU$ is the material derivative on $\p \CU$, $ \Delta_{\SCB} $ is the surface Laplacian, $\Pi$ is the second fundamental form of $\SCB$, $\CN$ is the Dirichlet Neumann operator of $\SD$,  $\top$ and $\perp$ are the tangential and normal component to $\SCB$ of the relevant quantity, $ \Delta_0^{-1}$ is the inverse Laplacian on $\SD$ with $0$ Dirichlet data. Here we also  introduced the notation $A_1 \cdot A_2 =\text{trace} (A_1 (A_2^*))$, for two matrices, where  $A^*$ is the adjoint  of $A$.

In order to compute commutators of vector fields with the above system we need to introduce some geometric notation.

\subsection{Geometric notations}
Recall that  $\CU=\{(t,x,z); \;\; (x,z)\in\SD\}$ denotes the space time fluid domain. Let
 \[
\tilde\p_t = \p_t  +(\p_th)\p_z, \qquad  \tilde\p_i= \p_i + (\p_ih)\p_z, \quad i =1,2,
\]
denote vector fields defined on $\CU$ that  are tangent to $\p\CU$.  For any function $\varphi$ defined on $\CU$ let $\varphi_b$ denote its value on $\CU$, i.e., 
$\varphi_b(t,x) = \varphi(t,x,h(t,x))$, consequently
\[
\tilde\p_\alpha\varphi(t,x,z)|_{z=h(t,x)}= \p_\alpha(\varphi_b(t,x)). 
\]
Also note that since $\tilde\p_i\in T\SCB$ are linearly independent, they can be used to construct an orthonormal frame on  $T\SCB$, which we denote by $(e_1,e_2)$.

For any vector $e\in T\SCB$  the covariant derivative $\CD_e$ on $T\SCB$ can be defined in terms of the outward  unit normal $N\perp T\SCB$ as follows. 
Writing  $\nabla_e=e\cdot\nabla$ for the directional derivative in $\RR^3$,  we have
\begin{equation*}
\CD_e w=( \nabla_e w)^\top = \nabla_e w - (\nabla_e w)^\perp  = \nabla_e w +( w\cdot \nabla_eN) N, \qquad w \in T\SCB.
\end{equation*}
For a vector-valued function on $\RR^3$ $f = (f^j)$, we also denote $Df$ for the partial derivatives matrix
$$
Df = (\partial_i f^j)_{i,j}.
$$
For any orthonormal frame $\{e_a\}$ of  $T\SCB$,  the second fundamental form $\Pi$,  the mean curvature $\kappa$ of $\SCB$, the Hessian $\CD^2$
and the surface Laplacian $\Delta_\SCB$ are given by
\begin{align*}
&\Pi(e_a) = \nabla_{e_a}N, \qquad  \kappa = \nabla_{e_a}N\cdot e_a, \qquad  \CD \phi(X,Y) = \nabla_X \nabla_Y \phi - \nabla_{\CD_X Y} \phi, \\
&\Delta_\SCB \varphi =   \CD^2\varphi(e_a,e_a) =   \nabla_{e_a}\nabla_{e_a} \varphi- \nabla_{\CD_{e_a}e_a}\varphi=    \nabla_{e_a}\nabla_{e_a} \varphi- \nabla_{\nabla_{e_a}e_a}\varphi -\kappa\nabla_N\varphi,
\end{align*}
where we sum over repeated indices.  Note that using the frame $\{e_a\}$ the equation for $\kappa$ can be rewritten as
\begin{equation}\label{dtkframe}
\D_t \kappa =- \Delta_{ \SCB} v \cdot N -2  (\nabla_{e_a}N) \cdot (\nabla_{e_a}  v).
\end{equation}
The harmonic extension to $\mathscr{D}$ of a function $f$ defined on $\mathscr{B}$ is denoted by $f_\CH$, and the Dirichlet-Neumann operator by  $\CN$ 
\[
\CN (f) = \nabla_N f_\CH : \SCB \to \RR.
 \]
We refer to Section~\ref{appendixtaee} for the definitions and main properties of the spaces $H^s(\mathscr{B})$ and $H^s(\mathscr{D})$.

\subsection{Vector fields} \label{Vf}
Recall that we defined $\Omega$ and $\mathcal{S}$ on $\mathbb{R}^2$ by
\begin{equation}
\label{VF}
\Omega = x^1 \partial_2 - x^2 \partial_1 = \omega^i \partial_i \quad \mbox{and} \quad \mathcal{S} = \frac{3}{2} t \partial_t + x^1 \partial_1 + x^2 \partial_2 
= \frac{3}{2} t \partial_t + x^i \partial_i.
\end{equation}
For any function $\varphi: \CU \to \RR$  with   $\varphi_b =\varphi|_{\p \CU}$ we have
\[
\begin{split}
\left(\frac{3}{2} t \partial_t + x^i \partial_i\right)\varphi_b = \left. \left( \frac 32 t\p_t +x^i\p_i + z\p_z+( \CS h-h )\p_z\right)\varphi\right|_{\p \CU},\\
\left( x^1 \partial_2 - x^2 \partial_1\right)\varphi_b = \left. \left(  x^1 \partial_2 - x^2 \partial_1+(  x^1 \partial_2h - x^2 \partial_1h )\p_z\right)\varphi\right|_{\p \CU}.
\end{split}
\]
Thus with a slight abuse of notation, $(\CS,\Omega)$, can be considered as vector fields defined on $\p\CU$ by writing
\begin{align*}
&\TCS  =  \frac 32 t\p_t + x^i\p_i + z\p_z +Z\p_z, \qquad Z= (\frac{3}{2} t \partial_t + x^i \partial_i) h-h,\\
&\TOM = x^1\p_2 -x^2\p_1 + ( x^1 \partial_2 - x^2 \partial_1)h \partial_z.
\end{align*}
The coordinates of the space parts of $\mathcal{S}$ and $\Omega$ are denoted respectively $(S^i)$ and
$(\omega^i)$:
$$
\left(\begin{array}{l} S^1 \\ S^2 \\ S^3 \end{array} \right) = \left(\begin{array}{c} x^1 \\ x^2 \\ z + \mathcal{S} h - h \end{array} \right)
\quad \mbox{and} \quad \left(\begin{array}{l} \omega^1 \\ \omega^2 \\ \omega^3 \end{array} \right) 
= \left(\begin{array}{c} -x^2 \\ x^1 \\ \Omega h \end{array} \right).
$$
These vector fields can also be extended harmonically on  $\mathscr{D}$   $(\TCS_\CH, \TOM_\CH)$,  and thus on $\CU$:
 \begin{align*}
& \TCS_\CH =    \frac 32 t\p_t + x^i\p_i + z\p_z   +Z_\CH\p_z, \qquad \TOM_\CH =   x^1 \partial_2 - x^2 \partial_1 +   \omega^3_\CH\p_z.
 \end{align*}
 
\subsection{Commutators}  Since on $\p\CU$,   $\TCS, \TOM,   \D_t\in T\p\CU$, then $[\TCS, \D_t ],  [\TOM, \D_t ] , [\TOM, \TCS] \in   T\p\CU$, and
\begin{subequations}\label{commute}
\begin{align}
&[\TCS, \D_t ] \varphi   + \frac 32 \D_t \varphi  = (\frac12v^i+\CS v^i)\tilde{\p}_i\varphi  \\
&[\TOM, \D_t ] \varphi   = \Omega v^i \tilde{\p}_i\varphi -v^1  \tilde{\p}_2 \varphi + v^2  \tilde{\p}_1 \varphi.
\end{align}
\end{subequations}
\begin{remark}  One can think of these commutators in the following way. \\

\noindent $\bullet$ $[\TCS, \D_t ] + \frac 32 \D_t$  is a first order differential operator with coefficients depending on $( v,\CS v, \p h v, \p h \CS v)$.\\

\noindent $\bullet$   $[\TOM, \D_t ]$ is a first order differential operator with coefficients depending on $( v,\Omega v, \p h v, \p h\Omega v)$.\\
\end{remark}

To compute commutators of $(\D_t,  \TCS, \TOM)$ with the surface Laplacian $\Delta_\SCB$, we proceed as follows.  
Writing  $\CX = \alpha(t)\p_t + X\cdot\nabla \in T\p\CU$  for any of these vector fields, we first note that for  any vector field $e\in T\SCB$ we have $[ \CX,e]  \in T\SCB$
since $[\CX,e] \in T\p\CU$ and has no $\p_t$ term. Thus
\begin{equation*}
[ \CX,e] = \CX e - \nabla_e X =  (\CX e)^\top -( \nabla_{e }X)^\top
\end{equation*}
We also note that  for  any orthonormal frame $\{e_a\}$ of $T\SCB$ that depends smoothly on $t$, we have 
\begin{equation} \label{antisym}
(\CX e_a)^\top\cdot e_b  +  e_a  \cdot (\CX e_b)^\top = \CX(e_a\cdot e_b) =0.
\end{equation}
With these observations  we compute
\begin{equation*}
\begin{split}
\CX\Delta_\SCB \varphi  &= \CX  \left(\nabla_{e_a}\nabla_{e_a} \varphi- \nabla_{\CD_{e_a}e_a}\varphi\right)\\
&= \Delta_\SCB \CX\varphi  +   \nabla_{ [\CX,  e_a]}\nabla_{e_a}\varphi +  \nabla_{e_a}\nabla_{ [\CX,  e_a]}\varphi - \nabla_{[\CX, \CD_{e_a}e_a]}\varphi\\
&= \Delta_\SCB \CX\varphi  + 2\CD^2\varphi(e_a, [\CX,  e_a]) +  \nabla_{\W}\varphi\\
&= \Delta_\SCB \CX\varphi  - 2\CD^2\varphi(e_a, (\nabla_{e_a}X)^\top) +  \nabla_{\W}\varphi, \qquad \mbox{ by \eqref{antisym}} 
\end{split}
\end{equation*}
where $\W = \CD_{[ \CX,  e_a]}e_a + \CD_{e_a}[ \CX,  e_a]  -[ \CX, \CD_{e_a}e_a]\in T\SCB$. By keeping track of  the tangential terms we get
\begin{equation*}
\begin{split}
\W &=\left( \nabla_{[ \CX,  e_a]}e_a + \nabla_{e_a}[ \CX,  e_a]  - \CX(\nabla_{e_a}e_a -\kappa N)
+ \nabla_{ \CD_{e_a}e_a} X \right)^\top\\
&=  \left( \nabla_{[ \CX,  e_a]}e_a  + \nabla_{e_a}\CX e_a- \nabla_{e_a}\nabla_{e_a}  X- \CX(\nabla_{e_a}e_a) + \CX (\kappa N) + \nabla_{\CD_{e_a}e_a} X\right)^\top\\
&= - \left( \Delta_\SCB X \right)^\top + \kappa\CX N.
\end{split}
\end{equation*}
Thus
\begin{equation}\label{comxlap}
[\CX , \Delta_\SCB]\varphi =  -2\CD^2\varphi\cdot \nabla^\top X - \left( \Delta_\SCB X \right)\cdot (\nabla^\top \varphi) + \kappa\CX N\cdot (\nabla^\top \varphi),
\end{equation}
and the commutators of $\D_t$, $\TCS$, and $\TOM$ with $\Delta_\SCB$ are given by differential operators
\begin{subequations}\label{comm-lap}
\begin{align}
&[\D_t , \Delta_\SCB]\varphi =-2\CD^2\varphi\cdot \nabla^\top v - \left( \Delta_\SCB v \right)\cdot (\nabla^\top \varphi) 
+\kappa\D_t N\cdot (\nabla^\top \varphi)  \label{dtlap}\\
&[\TCS , \Delta_\SCB]\varphi = -2 \Delta_\SCB\varphi  -2\CD^2\varphi\cdot \nabla^\top (Z\mathbf{k}) - 
\left( \Delta_\SCB S \right)\cdot (\nabla^\top \varphi) + \kappa\CS N\cdot (\nabla^\top \varphi)\\
&[\TOM , \Delta_\SCB]\varphi =  -2\CD^2\varphi\cdot \nabla^\top \omega  - \left( \Delta_\SCB \omega \right)\cdot (\nabla^\top \varphi) + \kappa\TOM  N\cdot (\nabla^\top \varphi)
\end{align}
\end{subequations}

\begin{remark} \label{redtlap} Using the relation  $D_th= v_3$ we get\\

\noindent $\bullet$ $[\D_t , \Delta_\SCB]$  is a second order operator with coefficients depending on 
$w_1\overset{def}{=}( \p^2h,(\p h)^2, Dv, D^2v,$

$\qquad \qquad\qquad\qquad\qquad\qquad\qquad\qquad\qquad\qquad\qquad\qquad\qquad\qquad\qquad\qquad\qquad\p h Dv, \p h D^2v)$.\\

\noindent $\bullet$   $[\TCS , \Delta_\SCB] + 2 \Delta_\SCB$ is a second order operator with coefficients depending on $w_2\overset{def}{=}( \p^2 h, (\p h)^2 , \CS  \p h, \CS \p^2h)$.\\

\noindent $\bullet$   $[\TOM , \Delta_\SCB]$ is a second order operator with coefficients depending on $w_3\overset{def}{=}( \p^2 h, (\p h)^2, \TOM  \p h, \TOM \p^2h)$.
\end{remark}

To compute commutators of $(\D_t,  \TCS, \TOM)$ with  the Dirichlet--Neumann operator $\CN$ we proceed as follows.  Again writing 
 $\CX = \alpha(t)\p_t + X\cdot\nabla \in T\p\CU$,  for either $\D_t$, $\TCS$, or $\TOM$,  we have for any  $g : \CU \to\RR$ and $\varphi : \p\CU \to\RR$
\begin{subequations}\label{precmd}
\begin{align}
&  \CX_\CH \nabla g = \nabla   \CX_\CH g  - (DX_\CH)^* \nabla g\label{grad} \\
&  \CX_\CH  \varphi_\CH = (  \CX \varphi)_\CH
+ \Delta^{-1}_0 2 {\rm div}( (DX_\CH)^* \nabla \varphi_\CH) \\
&  \CX_\CH \Delta_0^{-1}g =\Delta_0^{-1}(  \CX_\CH  g) + \Delta^{-1}_0 2 {\rm div}((DX_\CH)^*  \nabla\Delta_0^{-1}g )\label{lap-1}
\end{align}
\end{subequations}
This gives the identities
$$
[ \CX_\CH, \nabla \Delta_0^{-1} {\rm div}] g = (DX_\CH)^* \nabla \Delta_0^{-1}{\rm div}  g 
+ 2 \nabla \Delta_0^{-1} {\rm div} \left( (DX_\CH)^* \nabla \Delta_0^{-1} {\rm div} g \right) + \nabla \Delta_0^{-1} \left( D X_{\CH} \cdot D g \right)
$$
and
\begin{equation}\label{ddn}
[ \CX, \CN]\varphi =  (\CX N)\cdot \nabla \varphi_\CH   - N\cdot ((DX_\CH)^* \nabla \varphi_\CH)
+ N\cdot \nabla  \Delta_0^{-1} 2{\rm div}((DX_\CH)^*  \nabla \varphi_\CH).
\end{equation}
In particular
\begin{subequations}\label{comm-n}
 \begin{align}
&[\D_t, \CN]\varphi =\D_tN\cdot \nabla \varphi_\CH   - N\cdot ((Dv)^*\nabla \varphi_\CH)
+ N\cdot \nabla  \Delta_0^{-1} 2{\rm div}((Dv)^*  \nabla \varphi_\CH),  \label{dtn}\\[.3em]
\begin{split}\label{sdn}
&[\TCS, \CN]\varphi + \CN \varphi = (\CS N) \cdot  \nabla \varphi_\CH  - N\cdot( \nabla Z_\CH) \p_z\varphi_\CH
 + N\cdot \nabla  \Delta_0^{-1} 2{\rm div}(( \nabla Z_\CH) \p_z\varphi_\CH)
\end{split} \\[.3em]
\begin{split}\label{rdn}
&[\TOM, \CN] \varphi=  (\TOM N) \cdot  \nabla \varphi_\CH   -  N^1\p_2\varphi_\CH + N^2\p_1\varphi_\CH -   N\cdot (\nabla \omega^3_\CH)\p_z \varphi_\CH\\
&\phantom{[\TOM, \CN] \varphi=} + N\cdot \nabla  \Delta_0^{-1} 2{\rm div}((\nabla \omega^3_\CH) \p_z \varphi_\CH);
\end{split}
\end{align}
\end{subequations}
The only term that prevents the order of these commutators to be apparent is $N\cdot \nabla  \Delta_0^{-1} 2{\rm div}$.   However by proposition \eqref{elliptic}
\[
N\cdot \nabla  \Delta_0^{-1} {\rm div} :\; H^{s+\frac 12}(\SD)\to H^s(\SCB)
\]
 is a bounded operator.    Thus if $\SE: H^s(\SCB)  \to  H^{s+\frac 12}(\SD)$, is an extension operator, then
\[ N\cdot \nabla  \Delta_0^{-1} {\rm div}\SE :\; H^{s}(\SCB)\to H^s(\SCB),
\]
is a bounded operator.
\begin{remark}   \label{renlap}  Using the relation $D_th= v_3$  we get\\

\noindent $\bullet$ $[\D_t, \CN]$  is a first order operator with coefficients depending on $\tilde w_1 \overset{def}{=} (Dv, \p h Dv)$.\\

\noindent $\bullet$   $[\TCS, \CN] +\CN$ is a first order operator with coefficients depending on $\tilde w_2 \overset{def}{=}( \p h, \CS  \p h)$.\\

\noindent $\bullet$   $[\TOM, \CN] $ is a first order operator with coefficients depending on $\tilde w_3\overset{def}{=}( \p h, \TOM  \p h)$.
\end{remark}
Bounds on commutators are given in the following lemmata.
\begin{lemma}\label{com1-est} Assume $(v,\p h)$ are smooth and $\|(v,\p h)\|_{W^{\frac s2,\8}}\lesssim 1$, for $s >2$, 
then 
\begin{subequations}\label{estd}
\begin{align}
&\begin{cases}\label{estd1}
\|[\D_t,\CD]\varphi\|_{L^2} \lesssim  \|(Dv, \p h Dv)\|_{L^{\infty}}\|\nabla \varphi\|_{L^2}\\
\|[\D_t,\CD]\varphi\|_{H^s} \lesssim  \|(Dv, \p h Dv)\|_{W^{s/2 ,\infty}}\|\nabla \varphi\|_{H^s}  +   
 \| \nabla \varphi \|_{W^{s/2 ,\infty}}\| (Dv, \p h Dv)  \|_{H^s} \\
\end{cases}\\
&\begin{cases}\label{estd2}
\|[D_{t},\CN] \varphi\|_{L^2} \lesssim \|\tilde w_1\|_{L^\infty} \|\nabla \varphi\|_{L^2},\\
\|[D_{t},\CN] \varphi\|_{H^s} \lesssim \|\tilde w_1\|_{W^{[s/2],\infty}} \|\nabla \varphi\|_{H^{s}} +\| \tilde w_1\|_{H^{s}} \, \|\nabla \varphi\|_{W^{[s/2],\infty}}.
\end{cases}\\
&\begin{cases}\label{estd3}
\|[\CD,\CN] \varphi\|_{L^2} \lesssim \|\p^2h\|_{L^\infty} \|\nabla \varphi\|_{L^2},\\
\|[\CD,\CN] \varphi\|_{H^s} \lesssim \|\p^2h\|_{W^{[s/2],\infty}} \|\nabla \varphi\|_{H^{s}} +\| \p^2h\|_{H^{s}} \, \|\nabla \varphi\|_{W^{[s/2],\infty}}.
\end{cases}\\
&| \langle[D_t, \CN] \varphi,   \varphi \rangle  |  \lesssim  \left[ \|Dv\|_\infty + \|Dh\|_{W^{1,\infty}} \|Dv\|_{W^{1,\infty}}
+ \|D^2 v\|_\infty \right] \|\varphi\|^2_{H^{1/2}}\label{estd4}
\end{align}
\end{subequations}
\end{lemma}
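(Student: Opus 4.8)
The plan is to obtain all four estimates from the explicit commutator formulas derived just above, namely \eqref{comxlap} (equivalently \eqref{dtlap}) for $[\D_t,\CD]$, and \eqref{ddn}–\eqref{dtn} for $[\D_t,\CN]$ and $[\CD,\CN]$, by treating each term in those formulas as a product (or a product composed with an order-zero operator) and applying the standard product/commutator calculus in $L^2$ and $H^s$. The two basic tools are: (i) the bilinear estimate $\|fg\|_{H^s}\lesssim \|f\|_{W^{[s/2],\infty}}\|g\|_{H^s}+\|f\|_{H^s}\|g\|_{W^{[s/2],\infty}}$ (Moser/Kato–Ponce type, valid on $\SCB$ and $\SD$ via the function-space theory quoted in Section~\ref{appendixtaee}), together with its $L^2$ degenerate case $\|fg\|_{L^2}\lesssim\|f\|_\infty\|g\|_{L^2}$; and (ii) the boundedness, from Proposition~\eqref{elliptic}, of $N\cdot\nabla\Delta_0^{-1}\operatorname{div}\,\SE:H^s(\SCB)\to H^s(\SCB)$ and of the harmonic extension and Dirichlet–Neumann operators with the expected half-derivative gains. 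Throughout, one uses that the coefficient bundles $w_i$, $\tilde w_i$ are, by the relation $D_th=v_3$ recorded in Remarks~\ref{redtlap}, \ref{renlap}, polynomial expressions in $(v,\p h)$ and their derivatives, and that $\|(v,\p h)\|_{W^{s/2,\infty}}\lesssim 1$ controls all the low-order factors appearing in Leibniz expansions.

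First I would do \eqref{estd2}. In \eqref{dtn} each of the three terms is of the form (coefficient depending on $\tilde w_1=(Dv,\p h\,Dv)$) times a derivative of $\varphi_\CH$, possibly precomposed with the bounded operator $N\cdot\nabla\Delta_0^{-1}2\operatorname{div}(\cdot)$. Since $\|\varphi_\CH\|_{H^{s+1/2}(\SD)}\lesssim\|\varphi\|_{H^{s+1}(\SCB)}$ would be too lossy, one instead keeps $\nabla\varphi_\CH$ and notes $\|\nabla\varphi_\CH\|_{H^s(\SD)}\lesssim\|\nabla\varphi\|_{H^s(\SCB)}$ up to the trace/extension bounds; then the product estimate (i) gives the $H^s$ bound with the stated split, and the $L^2$ bound is the degenerate case $\|\tilde w_1\|_\infty\|\nabla\varphi\|_{L^2}$ (here the operator $N\cdot\nabla\Delta_0^{-1}\operatorname{div}$ is order zero on $L^2(\SCB)\to L^2(\SCB)$ after composing with an extension, so it costs nothing). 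Estimate \eqref{estd3} is identical in structure, reading off from \eqref{comxlap} with $\CX=\D_t$ that $[\D_t,\CD]\varphi=-2\CD^2\varphi\cdot\nabla^\top v-(\Delta_\SCB v)\cdot\nabla^\top\varphi+\kappa\,\D_tN\cdot\nabla^\top\varphi$, whose coefficients are, via $D_th=v_3$, controlled by $\p^2h$ and lower-order data already bounded by $1$; then apply (i) with $\nabla\varphi$ in the high slot. Estimate \eqref{estd1} is the $[\D_t,\CD]$ statement under a weaker coefficient norm, namely writing the coefficients directly in terms of $(Dv,\p h\,Dv)$ rather than $\p^2h$; this follows the same computation but keeps $Dv$ as the controlling quantity and uses the same product calculus.

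The last estimate \eqref{estd4} is the one requiring a genuinely different idea, and I expect it to be the main obstacle. A naive application of \eqref{estd2} with $s=1/2$ would demand $\|\tilde w_1\|_{H^{1/2}}$ and hence $D^2v\in L^2$, which is not what is asserted; instead one must exploit the pairing against $\varphi$ itself and the self-adjointness structure of $\CN$ (it is symmetric and nonnegative: $\langle\CN\varphi,\varphi\rangle=\int_\SD|\nabla\varphi_\CH|^2$). The plan is to write $\langle[\D_t,\CN]\varphi,\varphi\rangle=\langle\D_t(\CN\varphi),\varphi\rangle-\langle\CN\D_t\varphi,\varphi\rangle$ — or, more robustly, to go back to the definition $\langle\CN\varphi,\varphi\rangle=\int_\SD|\nabla\varphi_\CH|^2\,dxdz$, differentiate along $\D_t$ using the transport of the domain by $v$ (a Reynolds-transport / change-of-variables argument), and collect the resulting terms. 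The commutator pairing then becomes an integral over $\SD$ of $\nabla\varphi_\CH$ contracted against $(Dv)$-type coefficients times $\nabla\varphi_\CH$, plus boundary contributions involving $v_3$ and the curvature of $\SCB$; each such term is bounded by $\|\varphi\|_{H^{1/2}(\SCB)}^2$ using only $\|Dv\|_\infty$, $\|Dh\|_{W^{1,\infty}}\|Dv\|_{W^{1,\infty}}$, and (for the worst term, coming from the $\Delta_0^{-1}\operatorname{div}$ piece after one integration by parts) $\|D^2v\|_\infty$, exactly matching the right-hand side. The care needed is to integrate by parts so that at most one derivative ever falls on a single copy of $\varphi_\CH$ and the remaining derivatives land on the smooth coefficients, thereby avoiding the $H^1$ norm of $\varphi$; this is where the symmetry of $\CN$ is essential.
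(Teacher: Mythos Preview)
Your treatment of \eqref{estd1}--\eqref{estd3} is essentially what the paper does: read off the commutator as a first-order operator with explicit coefficients from \eqref{dtn} (resp.\ \eqref{ddn}), then apply the product estimate and the boundedness of $N\cdot\nabla\Delta_0^{-1}\operatorname{div}\,\SE$ from Proposition~\ref{elliptic}. (There is a labeling slip in your write-up: you describe \eqref{estd3} via $[\D_t,\CD]$, but \eqref{estd3} is $[\CD,\CN]$; the argument you sketch applies to \eqref{estd1}, and \eqref{estd3} is handled by the same mechanism using \eqref{ddn} with $\CX$ a tangential field, giving coefficients $\p^2h$.)

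For \eqref{estd4} your route differs from the paper's. You propose to differentiate the identity $\langle\CN\varphi,\varphi\rangle=\int_\SD|\nabla\varphi_\CH|^2$ along the flow (Reynolds transport) and extract the commutator pairing from the resulting volume and boundary terms. The paper instead stays with the explicit three-term formula \eqref{dtn} and pairs each piece against $\varphi$ directly: the tangential term $\int_\SCB\varphi\,G\cdot\nabla^\top\varphi$ is rewritten as $\tfrac12\int_\SCB G\cdot\nabla^\top\varphi^2$ and integrated by parts on $\SCB$; the term $\int_\SCB g\,\varphi\,\CN\varphi$ is handled via the identity $\int_\SCB g\,\varphi\,\CN\varphi=\tfrac12\int_\SCB\varphi^2\CN g+\int_\SD g_\CH|\nabla\varphi_\CH|^2$ (this is where $\|D^2v\|_\infty$ enters, through $\CN g$); and for the $\Delta_0^{-1}\operatorname{div}$ piece one uses $\int_\SCB\varphi\,N\cdot\nabla\Delta_0^{-1}\operatorname{div}(Dv\,\nabla\varphi_\CH)=\int_\SD\varphi_\CH\operatorname{div}(Dv\,\nabla\varphi_\CH)$ and integrates by parts once more. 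Your domain-variation approach should ultimately produce the same terms, but the paper's computation is shorter and avoids having to track the moving domain and the harmonic-extension correction $\partial_t(\varphi_\CH)-(\partial_t\varphi)_\CH$; the static integrations by parts already place exactly one derivative on each copy of $\varphi_\CH$ without invoking any transport identity.
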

\proof
$[\D_t,\CD]$ is a first order differential operator whose coefficients depend on  $(Dv, \partial h Dv)$, 
thus  \eqref{estd1}  is straightforward. 

To compute the norm of $[D_{t},\CN] $, given by  \eqref{dtn}, on $H^s$ we have  to bound terms of the form 
$G \cdot \nabla^\top$, $g\CN \varphi$, $|N \cdot \nabla \Delta^{-1}_{0} { {\rm div}}(A \nabla \cdot)$,   where  $G$, $g$, and $A$ are given in terms of  $\tilde w_1= (Dv, \p hDv)$.  
By  proposition  \ref{elliptic}  these terms can be bounded by 
\[
\begin{split}
&\|G \cdot \nabla^\top \varphi \|_{L^2}  + \|g\CN\varphi \|_{L^2} \lesssim 
 \| (G, G \partial h, g, g\p h)  \|_{L^{\infty}}  \| \nabla \varphi \|_{L^2} \\
&\|G \cdot \nabla^\top \varphi \|_{H^{s}}  + \|g\CN\varphi \|_{H^{s}} \lesssim 
  \| (G, G \partial h, g, g \partial h)  \|_{W^{{[\frac{s}{2}}],\infty}}  \| \nabla \varphi \|_{H^{s}} 
+ \| (G, G \partial h, g, g\p h)\|_{H^s} \| \nabla \varphi \|_{W^{{[\frac{s}{2}}],\infty}} \\
&\|N \cdot \nabla \Delta^{-1}_{0} { {\rm div}}(A \nabla \varphi ) \|_{L^2}
\lesssim    \|A\|_{L^\infty}\| \nabla \varphi \|_{L^2}\\ 
&\|N \cdot \nabla \Delta^{-1}_{0} { {\rm div}}(A \nabla \varphi ) \|_{H^{s}}
\lesssim    (1 + \| \partial h
\|_{W^{{[\frac{s}{2}} -1],\infty}}) \|A \nabla \varphi \|_{H^{s}} + \| \partial h\|_{H^{s-1}}\| A\nabla \varphi \|_{W^{[\frac{s}{2}],\infty}},
\end{split}
\]
and this implies \eqref{estd2}. Equation \eqref{estd3} follows from a similar computation.

Finally to derive \eqref{estd4}  we note that  $ \langle [D_t,\CN]\varphi,   \varphi\rangle$ can be written as a sum of three terms which can be estimated as follows:
\begin{align*}
&\bigg|\int\limits_\SCB\varphi G \cdot \nabla^\top \varphi\, dS \bigg|=\bigg|\int\limits_\SCB \frac G2 \cdot \nabla^\top \varphi^2\, dS \bigg|\lesssim
(\|\nabla^\top G\|_{L^\infty}  + \| G\|_{L^\infty}  \|\partial^2 h \|_{L^\infty} )|\varphi|^{2}_{L^2},\\
&\bigg|\int\limits_\SCB g \varphi \, \CN \varphi \, dS \bigg| =
\bigg| \frac{1}{2} \int_{\SCB} \varphi^2 \mathcal{N}g + \int_{SCD} g_{\CH} \nabla \varphi_{\CH} \nabla \varphi_{\CH} \bigg| \\
& \phantom{\bigg|\int\limits_\SCB g \varphi \, \CN \varphi \, dS \bigg|} 
\lesssim \left[ \|Dv\|_\infty + \|Dh\|_{W^{1,\infty}} \|Dv\|_{W^{1,\infty}} + \|D^2 v\|_\infty \right] |\varphi|^2_{H^{1/2}(\SCB)}
\end{align*}
where the last inequality sign follows from Proposition~\ref{prop:elliptic}, and finally
\begin{align*}
&\bigg|\int\limits_\SCB \varphi \, N \cdot \nabla \, \Delta^{-1}_{0} { {\rm div}}(D v
\nabla \varphi_{\CH}) \, dS \bigg| =  \bigg| \int\limits_\SD \varphi_{\CH}  { {\rm div}}(D v
\nabla \varphi_{\CH}) \, dx \bigg|  \\
&\phantom{ \bigg|\int\limits_\SCB \varphi \, N \cdot \nabla \, \Delta^{-1}_{0} { {\rm div}}(D v
\nabla \varphi_{\CH}) \, dS \bigg|} =  \bigg|\int\limits_\SCB N \cdot (D v \nabla \varphi_\CH)
\varphi_\CH dS \bigg| +\bigg| \int\limits_\SD \nabla \varphi_\CH \cdot D v \nabla \varphi_\CH dx\bigg|\\
&\phantom{  \bigg|\int\limits_\SCB \varphi \, N \cdot \nabla \, \Delta^{-1}_{0} { {\rm div}}(D v
\nabla \varphi_{\CH}) \, dS \bigg|}  \lesssim  (\|D v\|_{W^{1,\infty}} + \|\partial^2 h\|_{L^\infty}  \|Dv\|_{L^\infty} )\|\varphi\|^2_{H^{1/2}},
\end{align*}
which implies the stated inequality. 
\endproof

Using these commutators we can derive  a second order evolution equation for $\kappa$.
We start  by  choosing an orthonormal frame $\{e_a\}$ which is parallelly  transported along  the particle path i.e., 
\[
\D_t e_a = (\nabla_{e_a}v\cdot N)N.
\]
Applying $\D_t$ to  \eqref{dtkframe}, we obtain (see equation~(3.15) in~\cite{SZ06})
\[
\D_t^2 \kappa =- \D_t \Delta_{\SCB} v\cdot N  -  \Delta_{\SCB} v\cdot \D_t N  -2   (\D_t\nabla_{e_a}N) \cdot (\nabla_{e_a}  v)  -2   (\nabla_{e_a}N) \cdot (\D_t\nabla_{e_a}  v).
\]
Using the fact that 
\[
\D_t N = - [(D v)^*N]^\top, \mbox{ and }  [\D_t, \nabla_{e_a}] = -  \nabla_{(\nabla_{e_a}v)^\top},
\]
and the commutator formula $[\D_t, \Delta_{\SCB}]$, we obtain (see equation~(3.16) in~\cite{SZ06})
\begin{equation*} \begin{split}
\D_t^2 \kappa = &- \Delta_{\SCB} \D_t v\cdot N  - 2  \nabla_{e_a}N\cdot \nabla_{e_a} \D_t v  -2 \Delta_{\SCB} v \cdot \D_tN \\
&-\left( 4\CD_{e_a} \lf(\D_tN\rt) + 2 \nabla_{(\nabla_{e_a}v)^\top}N)\right) \cdot \nabla_{e_a} v - \kappa |\D_tN|^2 
- 2 \D_tN \cdot \nabla_{(\D_tN)}N
\end{split}
\end{equation*}
Using Euler's equation evaluated on $\SCB$ to substitute for $\D_t v$ on $\SCB$, we obtain
\begin{equation} \begin{split}
\D_t^2 \kappa =  &\Delta_{\SCB} \CN (\kappa) - \nabla \kappa_\CH \cdot
\Delta_{\SCB}N  -\Delta_{\SCB} \nabla\Delta_0^{-1} {\rm div}(v\cdot\nabla v) \cdot N \\
&- 2 \Delta_{\SCB} v \cdot \D_tN
-\left( 4\CD_{e_a} \lf(\D_tN\rt) + 2 \nabla_{(\nabla_{e_a}v)^\top}N)\right) \cdot \nabla_{e_a} v
 - \kappa |\D_tN|^2 \\
&  -2 \nabla_{e_a}N \cdot (\nabla_{e_a} \nabla \Delta_0^{-1}{\rm div}( v\cdot\nabla v))  -  2 \D_tN \cdot \nabla_{(\D_tN)}N \\
\overset{def}{=} &    \Delta_{\SCB} \CN (\kappa)  + R_0.
\label{E:dttk2} \end{split}
\end{equation}
Note that in terms of regularity on the boundary $\SCB$,  we have  from \eqref{firstk}
\[
v\sim \D_th, \qquad \Delta_\SCB v \sim \D_t\kp, \qquad N\cdot \D_tv  \sim \CN\kappa \sim D^3 h.
\]
Thus   the nonlinear  terms of $R_0$ are 
\[
\begin{split}
&\nabla \kappa_\CH \cdot \Delta_{\SCB}N =( \p^3h)^2 + cubic\\
&\Delta_{\SCB} \nabla\Delta_0^{-1} {\rm div}(v\cdot\nabla v) \cdot N = DvD^2v + cubic\\
&\Delta_{\SCB} v \cdot \D_tN =  D^2v\p_t\p h +cubic\\
&\CD_{e_a} \lf(\D_tN\rt) \cdot \nabla_{e_a} v = \p_t\p^2h Dv + cubic.
\end{split}
\]
If we introduce $w\overset{def}{=} (v,\p h)$, then we can write
\begin{equation}\label{r0approx}
R_0 \sim  \D_t\kp D_t\p h + (\p\kappa)^2 + \text{  cubic} \sim  (Dw, D^2 w)^2   +(w)^3.
\end{equation}
Energy estimates will be derived using equation \eqref{E:dttk2}.

\begin{remark} The energy and weighted energy estimates can be done simultaneously.
However we elected to separate the two for the benefit of the reader. The $H^s$ estimate is derived by commuting  $\CA =  \Delta_\SCB\CN$ 
with the equation which leads to a schematic equation 
$\p_t^2 \CA g -\p^3 \CA g = (\p \CA g)\p g$, while the weighted estimates which are derived from commuting $\CS$ and $\TOM$ lead to $\p_t^2 \CS g -\p^3 \CS g = (\p^3 g)\p \CS g$.
Thus energy estimates for $\CS g$ require estimates on $\p^3 g$. In terms of regularity, $\CS$ acts therefore as $3/2$ derivatives.
To avoid fractional derivatives in energy (and weighted energy) estimates we estimate derivatives in multiples of $3$.
\end{remark}

\subsection{${H^s(\SCB)}$  \label{energy-sec} estimates} For the remainder of this section all norms of $v$ are computed for $v|_\SCB$,  thus  $H^s$ stands for $H^s(\SCB)$, etc.
Bounds  of $\partial{h}$ and $v$ in $H^s$ will be  derived from the equation

\[
D^2_t \kappa - \Delta_\SCB \CN \kappa = R_0 \quad \text{ on }\quad \p\CU,
\]
and as such they are implicit in \cite{SZ06}.  

Energy estimates are usually obtained in a straightforward manner by
differentiating the equation and multiplying it by $D_t \kappa$. However in
this case, there will be commutators present, and bounding them
requires some care.

%

Consider smooth solutions of
\begin{equation}\label{lin-eq}
D^2_t g - \Delta_\SCB \CN g = F \quad \text{ on }\quad \p\CU.
\end{equation}
For functions defined on $\SCB$ let $\langle\; , \;  \rangle$ denote the  inner product on $L^2(\SCB)$,
and  define
\[
e_{g}(t) =  \langle \CD \CN D_tg,   \CD \CN\D_tg \rangle  +  \langle \CN\Delta_\SCB\CN g,   \Delta_\SCB\CN g \rangle = \|\mathcal{D}\CN D_t g \|^2_{L^2} + \| \Delta_\SCB\CN g \|^2_{H^{\frac12}}.
\]
\begin{lemma}\label{linear:prop}
Smooth solutions to the above equation satisfy, if $(Dv,\partial h)$ is small in $W^{3,\infty}$,
\begin{align*}  
\dot e_g \lesssim    \|(Dv,\p h Dv, \p^2 h)\|_{W^{3, \infty}} (e_g + \|D_tg\|_{L^2}^2 + \|\CD g\|^2_{\dot H^{\half}}) + \|\CD\CN F\|_{L^2}\sqrt{e_g},
\end{align*}
\end{lemma}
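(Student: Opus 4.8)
The plan is to run a weighted energy estimate for the second-order hyperbolic system \eqref{lin-eq}, using the operator $\CA = \Delta_\SCB\CN$ as a self-adjoint (to leading order) positive operator and $D_t$ as the time derivative. First I would differentiate the defining expression for $e_g(t)$ along the particle flow. Since $e_g = \langle \CD\CN D_tg, \CD\CN D_tg\rangle + \langle \CN\Delta_\SCB\CN g, \Delta_\SCB\CN g\rangle$, applying $D_t$ and using that $D_t$ acts as a (formally antisymmetric up to a divergence-of-$v$ correction) derivation on $L^2(\SCB)$ — i.e. $\frac{d}{dt}\langle a,b\rangle = \langle D_t a, b\rangle + \langle a, D_t b\rangle + \langle (\operatorname{div}_\SCB v)\, a, b\rangle$ — I would obtain
\[
\dot e_g = 2\langle \CD\CN D_t^2 g, \CD\CN D_tg\rangle + 2\langle \CN\Delta_\SCB\CN D_tg, \Delta_\SCB\CN g\rangle + (\text{commutator terms}) + (\text{terms with } \operatorname{div}_\SCB v).
\]
Then I would substitute $D_t^2 g = \Delta_\SCB\CN g + F$ from \eqref{lin-eq} into the first term. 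The principal parts should cancel: $2\langle \CD\CN\Delta_\SCB\CN g, \CD\CN D_tg\rangle$ wants to match $-2\langle \CN\Delta_\SCB\CN D_tg, \Delta_\SCB\CN g\rangle$ after integrating by parts (moving $\CD$ onto the other factor gives $-\Delta_\SCB$, and $\Delta_\SCB\CN$ commutes with $\CN$ to leading order), leaving $2\langle \CD\CN F, \CD\CN D_tg\rangle$, which is bounded by $\|\CD\CN F\|_{L^2}\sqrt{e_g}$ — exactly the last term in the claimed inequality. The key structural input is that $\CA = \Delta_\SCB\CN$ is essentially self-adjoint and nonnegative, so that its "square root" energy is coercive; this is the content of the definition of $e_g$ and of Proposition~\ref{elliptic} / Proposition~\ref{prop:elliptic} on the mapping properties of $\CN$ and $\nabla\Delta_0^{-1}\operatorname{div}$.

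The real work is controlling the error terms generated by the fact that $D_t$ does not commute with $\CD$, $\CN$, $\Delta_\SCB$, and that these three operators do not commute with each other. Here I would invoke the commutator identities and bounds already established: $[D_t,\Delta_\SCB]$ from \eqref{dtlap} (a second-order operator with coefficients in $w_1 = (\p^2h,(\p h)^2, Dv, D^2v, \p h\, Dv, \p h\, D^2v)$, per Remark~\ref{redtlap}), $[D_t,\CN]$ from \eqref{dtn} (first order, coefficients in $\tilde w_1 = (Dv, \p h\, Dv)$, per Remark~\ref{renlap}), $[D_t,\CD]$ (first order, coefficients in $(Dv,\p h\, Dv)$), and $[\CD,\CN]$ from \eqref{estd3}. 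Each time a commutator is moved across, it produces a term of the schematic form (coefficient in $(Dv, \p h\, Dv, \p^2h)$) times (up to the full energy order of derivatives of $g$) times (the top-order factor $\CD\CN D_tg$ or $\Delta_\SCB\CN g$). Using Lemma~\ref{com1-est} — in particular the $L^2$ bounds \eqref{estd1}, \eqref{estd2}, \eqref{estd3} and the crucial inner-product bound \eqref{estd4} (which is what lets us absorb the worst term $\langle [D_t,\CN]\CD g, \CD g\rangle$-type contributions into $\|\CD g\|_{\dot H^{1/2}}^2$ rather than losing a derivative) — each such term is controlled by $\|(Dv, \p h\, Dv, \p^2h)\|_{W^{3,\infty}}$ times $(e_g + \|D_tg\|_{L^2}^2 + \|\CD g\|_{\dot H^{1/2}}^2)$. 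The smallness hypothesis $\|(Dv,\p h)\|_{W^{3,\infty}}\ll 1$ ensures all the paradifferential/elliptic estimates of Lemma~\ref{com1-est} apply with the stated form, and also lets the geometric quantities ($N$, the metric on $\SCB$, $\Delta_\SCB$ itself) be treated as perturbations of the flat case.

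I would organize the bookkeeping by listing, term by term, the pieces of $\dot e_g$: (i) the principal cancellation, yielding the $\|\CD\CN F\|_{L^2}\sqrt{e_g}$ term; (ii) commutators of $D_t$ past $\CD$, $\CN$, $\Delta_\SCB$ acting on the "good" factor $D_tg$, each handled by \eqref{estd1}–\eqref{estd3} plus the regularity dictionary $\Delta_\SCB v\sim D_t\kappa$, $\CN\kappa\sim D^3h$ from \eqref{firstk}; (iii) the $\operatorname{div}_\SCB v$ correction from differentiating the $L^2(\SCB)$ pairing, which is lower order and bounded by $\|Dv\|_{W^{1,\infty}}$ times $e_g$; (iv) the delicate term where a commutator lands on a low-regularity factor and must be paired rather than estimated in $L^2$ — this is where \eqref{estd4} is essential and where $\|\CD g\|_{\dot H^{1/2}}^2$ (as opposed to $e_g$ alone) enters the right-hand side. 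The main obstacle I anticipate is precisely (iv): keeping the commutator terms from costing half a derivative more than $e_g$ controls, which forces one to pair certain terms using the symmetry structure (as in the three-term estimate in the proof of \eqref{estd4}) instead of brutally applying Cauchy–Schwarz. Once every term is shown to be $\lesssim \|(Dv,\p h\,Dv,\p^2h)\|_{W^{3,\infty}}(e_g + \|D_tg\|_{L^2}^2 + \|\CD g\|_{\dot H^{1/2}}^2) + \|\CD\CN F\|_{L^2}\sqrt{e_g}$, the lemma follows.
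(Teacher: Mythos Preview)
Your proposal is correct and follows essentially the same route as the paper: differentiate $e_g$, substitute $D_t^2 g = \Delta_\SCB\CN g + F$ so the principal terms cancel and leave $\langle \CD\CN F,\CD\CN D_t g\rangle$, pick up the surface-measure correction $D_t\,dS = (v^\perp\kappa + \CD\cdot v^\top)\,dS$, and bound the remaining commutator terms via Lemma~\ref{com1-est}, with \eqref{estd4} handling the paired term $\langle [D_t,\CN]\Delta_\SCB\CN g,\Delta_\SCB\CN g\rangle$ to produce $\|\CD g\|_{\dot H^{1/2}}^2$ rather than a derivative-losing bound. The paper arrives at the same list of terms by multiplying \eqref{lin-eq} by $\CN\Delta_\SCB\CN D_t g$ and integrating; your organization into items (i)--(iv) matches their bookkeeping.
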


\proof Recall first that
$$
D_t dS = ( v^\perp \kappa + \CD \cdot v^\top ) dS.
$$
Multiply \eqref{lin-eq}  by $\CN\Delta_\SCB\CN D_tg$ and integrate over $\SCB$ to get
\[
\begin{split}
\dot  e_g(t) =& \langle ( v^\perp \kappa + \CD \cdot v^\top )\CD  \CN\D_tg,\CD  \CN\D_tg \rangle
+ 2\langle [D_t, \CD] \CN D_tg,  \CD  \CN\D_tg \rangle + 2 \langle\CD  [D_t, \CN ]D_tg,   \CD \CN\D_tg \rangle \\
&+ \langle[D_t, \CN] \Delta_\SCB\CN g,   \Delta_\SCB\CN g \rangle  
+2 \langle \CN[D_t , \Delta_\SCB\CN] g,   \Delta_\SCB\CN g \rangle  +  \langle \CD \CN F,   \CD \CN\D_tg \rangle.
\end{split}
\]
Thus to estimate $e_g$ we use the commutator bounds given in Lemma  \ref{com1-est}.
These bounds imply
\[
 |\langle [D_t, \CD] \CN D_tg,  \CD  \CN\D_tg \rangle|+  |\langle\CD  [D_t, \CN ]D_tg,   \CD \CN\D_tg\rangle | \lesssim \|(Dv,\p hDv, \p^2 h)\|_{W^{1,\infty}} 
\| D_tg\|^2_{H^2},
 \] 
%
%
and by \eqref{estd4} of the same lemma we have 
\[
| \langle[D_t, \CN] \Delta_\SCB\CN g,   \Delta_\SCB\CN g \rangle  |
+| \langle \CN[D_t , \Delta_\SCB\CN] g,   \Delta_\SCB\CN g \rangle | \lesssim  \|(D v,\p hDv,\p^2 h)\|_{W^{3,\infty}} \|\CD g\|^2_{H^{5/2}}.
\]
Consequently 
\[
\dot e_g \lesssim  \|(Dv,\p hDv, \p^2 h)\|_{W{1, \infty}} (e_g + \|D_tg\|_{L^2}^2 + \|\CD g\|^2_{\dot H^{\half}}) + \|\CD\CN F\|_{L^2}\sqrt{e_g},
\]
which implies the desired result.
\endproof 

$H^s$ estimates for solutions of  \eqref{CW} follow in a straightforward manner from Lemma \ref{linear:prop}. Let  $\kappa_n =\CA^n\kappa$, where $\CA= \Delta_\SCB\CN$, then     
\begin{equation}
\D_t^2 \kappa_n =  \Delta_{\SCB} \CN (\kappa_n)  + R_n,\label{kn}
\end{equation}
where $R_n$ is defined inductively by
\[
R_n \overset{def}{=}- [\CA, \D^2_t] \kappa_{n-1}  +\CA R_{n-1}= D_t[D_t,\CA]\kappa_{n-1} + [D_t,\CA]\D_t\kappa_{n-1}+ \CA R_{n-1}   ,
\]with $R_0$ given by \eqref{E:dttk2}.  Thus 
\begin{align}\label{Rninduc}
R_n =\sum_{i=1}^n\CA^{n-i} \left(D_t[D_t,\CA]\kappa_{i-1} + [D_t,\CA]\D_t\kappa_{i-1}\right)  + \CA^{n}R_0
\end{align}

Define the energy  
\[
\CE_n(t) \overset{def}{=} \half \langle \CD \CN \D_t\kappa_n,   \CD \CN \D_t\kappa_n \rangle  +  \half \langle \CN\CA\kappa_n,   \CA\kappa_n \rangle
\]
where $\CA= \Delta_\SCB\CN$. Finally, let
$$
E_n(t) \overset{def}{=} \sum_{k=0}^{n} \CE_k(t) + E_{\rm{physical}},
$$
where $n$ is a large integer and $E_{\rm{physical}}$ is given by \eqref{en0}. Then $E_n$ is equivalent to
\begin{equation*}
\begin{split}
E_n & \sim  \|D_t\kappa\|^2_{H^{3n + 2}}   + \|\kappa\|^2_{H^{3n+ 7/2}} + \|\Lambda^{1/2} \psi \|^2_{L^2}+ \|\p h\|^2_{L^{2}} \\
& \sim \|\Lambda^{1/2} \psi\|^2_{H^{3n+\frac{9}{2}}}+ \|\p h\|^2_{H^{3n+ 9/2}} \\
& \sim \| \Lambda^{1/2} u \|^2_{H^{3n+\frac{9}{2}}}.
\end{split}
\end{equation*}

Global energy estimates will be derived under the assumptions
\begin{equation}\label{assume1}
\begin{cases}\tag{a1}
E_{2K}(t) \lesssim  \epsilon, \mbox{ for } \epsilon \ll 1, \quad \text{for  $K$ large},\\
b(t) \overset{def}{=} \|\p^2 h\|_{W^{3K + 7/2 ,\infty}} +\| D v\|_{W^{3K + 3,\infty}} \lesssim \epsilon t^{-1},  \quad \text{for $ t\ge 1$}. 
\end{cases}
\end{equation}
which follow from $\|u\|_{\rm{decay}} \lesssim \epsilon$.
\begin{proposition}\label{prop:hs}
Under assumption \eqref{assume1}, solutions of \eqref{E:dttk2} with data $(h_0,v_0)$
\[   E_{2K}(0)
 \sim   \|v_0\|^2_{H^{6K +4}}+ \|\p h_0\|^2_{H^{6K +9/2}} \le \epsilon^2 \ll1,
\]
satisfy
\begin{equation}
\label{energyn}
E_{2K}(t) \lesssim \epsilon^2 t^{2\delta},
\end{equation}
\end{proposition}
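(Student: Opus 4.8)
The plan is to run a Grönwall argument on the hierarchy of energies $\CE_k$, $0\le k\le 2K$, using the differential inequality of Lemma~\ref{linear:prop} applied to each $g=\kappa_n$ in the equation \eqref{kn}, and feeding in the decay hypothesis \eqref{assume1} on $b(t)$. First I would record that, by the equivalence $E_n\sim \|\Lambda^{1/2}u\|^2_{H^{3n+9/2}}$ and the definition of $b(t)$, the coefficient $\|(Dv,\p h\,Dv,\p^2h)\|_{W^{3,\infty}}$ appearing in Lemma~\ref{linear:prop} is controlled by $b(t)\lesssim \epsilon t^{-1}$ (for $t\ge 1$; for $t\le 1$ everything is bounded by the local theory), so each $\CE_n$ satisfies
\[
\dot{\CE}_n \lesssim \frac{\epsilon}{\langle t\rangle}\,\CE_n + \|\CD\CN R_n\|_{L^2}\sqrt{\CE_n}.
\]
The point of the cascade is that $\|\CD\CN R_n\|_{L^2}$ is NOT controlled by $\CE_n$ alone: from the inductive formula \eqref{Rninduc}, $R_n$ involves $\CA^{n-i}$ applied to terms quadratic in $w=(v,\p h)$ and its derivatives (schematically $(Dw,D^2w)^2+(w)^3$ as in \eqref{r0approx}, with commutators $[D_t,\CA]$ adding one order of $w$-coefficients), so by the product/Moser estimates one has
\[
\|\CD\CN R_n\|_{L^2}\lesssim b(t)\Big(\sqrt{E_n}+\sqrt{\textstyle\sum_{k\le n}\CE_k}\Big)\lesssim \frac{\epsilon}{\langle t\rangle}\sqrt{E_n},
\]
where the low-frequency / low-regularity factors in each quadratic term are put in $L^\infty$ (this is where $b(t)$ enters) and the high-regularity factor is put in $L^2$ and absorbed into $E_n$. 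Thus summing over $k\le n=2K$ and using $E_{2K}=\sum_{k\le 2K}\CE_k+E_{\rm physical}$ together with conservation of $E_{\rm physical}$,
\[
\frac{d}{dt}E_{2K}\lesssim \frac{\epsilon}{\langle t\rangle}E_{2K},
\]
and Grönwall gives $E_{2K}(t)\lesssim E_{2K}(0)\,\langle t\rangle^{C\epsilon}\lesssim \epsilon^2\langle t\rangle^{2\delta}$ once $\delta\ge \tfrac{C}{2}\epsilon$ (which is arranged by the choice $\delta=\sup(C_0R\epsilon,\alpha)$), proving \eqref{energyn}.

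The organizational steps are therefore: (1) bootstrap — on the maximal interval where $E_{2K}(t)\le A\epsilon^2\langle t\rangle^{2\delta}$ for a constant $A$ to be fixed, show the estimate improves; (2) for each $n\le 2K$, expand $R_n$ via \eqref{Rninduc} and bound $\|\CD\CN R_n\|_{L^2}$ by a bilinear Moser estimate, always assigning $L^\infty$ to the factor with fewer derivatives so that its norm is dominated by $b(t)$ (using $K\ge 10$ so that $3K+7/2$ and $3K+3$ derivatives in $L^\infty$ leave enough room below the top regularity $6K+9/2$); (3) insert into Lemma~\ref{linear:prop}, sum, add $E_{\rm physical}$, and apply Grönwall on $[1,t]$ after handling $[0,1]$ by local well-posedness; (4) close the bootstrap by choosing $A$ and then $\epsilon$ small. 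One technical care-point: Lemma~\ref{linear:prop} also carries the lower-order terms $\|D_tg\|_{L^2}^2+\|\CD g\|^2_{\dot H^{1/2}}$, which for $g=\kappa_n$ are controlled by $\CE_{n-1}$-type quantities (one fewer power of $\CA$), hence already inside $E_{2K}$ — so they cause no loss, only a genuinely cascading (triangular) system, which is exactly why summing all $\CE_k$ at once works.

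The main obstacle, as always in these quasilinear energy estimates, is step (2): organizing the commutator expansion of $R_n$ so that every term genuinely has the structure ``$b(t)$ times something bounded by $\sqrt{E_{2K}}$''. The danger is a term in which the high-derivative factor carries slightly more than $6K+9/2$ derivatives of $u$, or a commutator $[\CA,\D_t^2]\kappa_{n-1}$ whose worst piece $D_t[D_t,\CA]\kappa_{n-1}$ secretly costs $3/2$ extra derivatives (this is the remark after \eqref{E:dttk2} about $\CS$ acting as $3/2$ derivatives). Here one uses the structural facts established earlier — that $[\D_t,\CD]$ is first order with coefficients in $(Dv,\p h\,Dv)$ (Remark after \eqref{estd}), $[\D_t,\CN]$ is first order with coefficients $\tilde w_1=(Dv,\p h\,Dv)$ (Remark~\ref{renlap}), and $[\D_t,\Delta_\SCB]$ is second order with coefficients $w_1$ (Remark~\ref{redtlap}) — together with the identities $\D_t N=-[(Dv)^*N]^\top$ and $\Delta_\SCB v\sim \D_t\kappa\sim D^3h$, to verify that each summand in \eqref{Rninduc} loses exactly the budgeted number of derivatives and no more, so that $3n+2$ derivatives on $D_t g$ and $3n+7/2$ on $\kappa$ in the definition of $\CE_n$ suffice to absorb everything. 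Once this bookkeeping is done the analytic content is just Moser estimates and Grönwall.
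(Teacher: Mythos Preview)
Your proposal is correct and matches the paper's own proof essentially line for line: apply Lemma~\ref{linear:prop} to each $g=\kappa_n$, use the inductive formula~\eqref{Rninduc} together with the commutator estimates of Lemma~\ref{com1-est} and the quadratic structure~\eqref{r0approx} of $R_0$ to get $\|\CD\CN R_n\|_{L^2}\lesssim \frac{\epsilon}{t}\sqrt{E_n}$, sum over $n\le 2K$, and Gr\"onwall. Your discussion of the lower-order terms, the derivative bookkeeping in the commutators, and the role of $\delta\ge C_0 R\epsilon$ is more explicit than the paper's terse write-up but adds nothing new.
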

\proof

Recall that  on the boundary $\SCB$,  we have  from \eqref{firstk}
\[
v\sim \D_th, \qquad \Delta_\SCB v \sim \D_t\kp, \qquad N\cdot \D_tv  \sim \CN\kappa \sim D^3 h.
\]
From Lemma \ref{linear:prop} we have for $n \le 2K$
\begin{align}  \label{edot:eq}
\dot \CE_n \lesssim    \|(Dv,\p h Dv, \p^2 h)\|_{W^{1, \infty}} (\CE_n + \|D_t\kappa_n\|_{L^2}^2 + \|\CD \kappa_n\|^2_{\dot H^{\half}}) + \|\CD\CN R_n\|_{L^2}\sqrt{\CE_n}.
\end{align}
From Lemma \ref{com1-est} and \eqref{assume1}, we have  for $n\ge4$
\[
\|\CD\CN\CA^{n-i} D_t[D_t,\CA]\kappa_{i-1} \|_{L^2} +  \|\CD\CN\CA^{n-i} [D_t,\CA]\D_t \kappa_{i-1} \|_{L^2} \lesssim \frac \epsilon t \sqrt{E_n},
\]
where we bound the low derivatives in $L^\8$ and the high derivatives in $L^2$.  From   
\eqref{r0approx}   we know that    $R_0 \sim  \D_t\kp D_t\p h + (\p\kappa)^2 $, and thus 
$$
\|\CD\CN\CA^nR_0\|_{L^2}    \lesssim \frac \epsilon t \sqrt{E_{n}},
$$
by assumption \eqref{assume1}.  Consequently \eqref{edot:eq} can be rewritten as $\dot \CE_n \lesssim    \frac \epsilon t E_{n}$, 
and by summing over $n \le 2K$ we get the stated inequality.
\endproof

Note that in terms of $(\psi,h)$, where $\psi$ is the boundary value of the potential $\psi_\CH$,  $E_0$ bounds
$\psi \in  \dot H^{\frac 12}(\RR^2)$   and $h\in \dot H^{1}(\SCB)$, and thus 
Proposition \ref{prop:hs}  gives control on
\[
E_{2K} \sim \|\Lambda^\half\psi\|^2_{H^{6K + 9/2}(\RR^2)} +  \|\p h \|^2_{H^{6K +9/2}(\RR^2)}
\sim \|\Lambda^\half u\|^2_{H^{6K + 9/2}(\RR^2)},
\]
where $u = \Lambda^{1/2} h + i \psi$.  


\subsection{Weighted energy  estimates}  Weighted energy estimates will be derived by commuting   $( \TCS, \TOM)$  with  the $\kappa$ equation  \eqref{E:dttk2}. 
The main difference, and difficulty, between the energy estimate and weighted estimates is the following.
Commuting $D_t$ with $\CN$ gives a first order differential operator with coefficients depending on  $\tilde w_1 = (Dv, \p h Dv)$, 
while commuting $\CS$ or $\Omega$ with $\CN$ gives a first order differential operator with coefficients depending on  $\tilde w_2 = (\p h, \CS\p h)$, 
The crucial difference is that $\tilde{w}_1$ decays $\sim \frac{1}{t}$ in $L^\infty$, whereas $\tilde{w}_1$ decays $\sim \frac{1}{t^{1-\delta}}$
(compare assumptions \eqref{assume1} and \eqref{assume2} for instance).
For this reason  weighted estimates are slightly worse than regular energy estimates.
We also would like to remind the reader that  since    $\CS$  in commutators  acts as $3/2$ derivatives, we will treat $( \TCS, \TOM,\CA)$, where $\CA= \Delta_\SCB\CN$,   
on equal footing; these operators are collectively denoted by $\Gamma$.  

In the sequel we will let  $\hd$ stand for  either $\p$ or $\CN$, and $\CX$ stand for 
$$
\CX = \TCS, \TOM.
$$
Finally, we define the spaces, 
%
%
\begin{gather}
\widetilde W^{k,p}(\SCB) = \{g \in W^{k,p}(\SCB)  ; \;  \CN g  \in W^{k-1,p}(\SCB)\}\\
L^p_\ell(\SCB) = \left\{ \varphi : \; \SCB \to \RR; \;  \|\varphi\|_{L^p_\ell} \overset{def}{=} \; \sum_{i=0}^\ell\|\Gamma^i \varphi\|_{L^p} \right\},\\
W^{k,p}_\ell(\SCB) =  \left\{ \varphi : \; \SCB \to \RR; \;  \|\varphi\|_{W^{k,p}_\ell} \overset{def}{=} \; \sum_{i=0}^\ell\|\Gamma^i \varphi\|_{\widetilde W^{k,p}} \right\},
\end{gather}
for $p= 2 \text{ or } \infty$, $k, \ell\in \NN$. For $p=2$ we allow $k\in\NN/2$.  Note that  $\widetilde W^{k,2}(\SCB) =  W^{k,2}(\SCB) $, however  $\widetilde W^{k,\8}(\SCB) \neq W^{k,\8}(\SCB)$,  due to the presence of $\CN$.  

The relation between $v$, $h$, and $\kappa$ in $W^{k,p}_\ell$ norms is as follows.
\begin{proposition} Under the assumption
\[
\|\p h\|_{W^{3,\infty}_{K}} + \|v\|_{W^{2,\infty}_{K} }\ll 1, \qquad  \text{ for }\; K>1,
\] 
 solutions  to  system \eqref{firstk}   satisfy \label{psidtk}
\begin{gather}
\|\p h\|_{W^{a+1,2}_{2K}(\SCB)} \sim \|\kappa\|_{W^{a,2}_{2K}(\SCB)}+ \|\p h\|_{L^2_{2K}(\SCB)}\label{hk}\\[.3em]
\|v \|_{W^{a+2,2}_{2K}(\SCB)}   \sim \| \D_t \kappa \|_{W^{a,2}_{2K}(\SCB)}      + \|\kappa\|_{W^{a,2}_{2K}(\SCB)}  +  \|\p h\|_{L^2_{2K}(\SCB)} +  \|v^\perp \|_{L^2_{2K}(\SCB)}.
\end{gather}
\end{proposition}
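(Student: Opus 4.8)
The plan is to establish the two equivalences by first treating the scalar case $\ell = 0$ (i.e.\ ordinary, unweighted $\widetilde W^{k,2}$ bounds) and then propagating them through the vector fields $\Gamma \in \{\TCS,\TOM,\CA\}$ by induction on $\ell$. For \eqref{hk} the scalar statement is essentially the elliptic regularity of the mean curvature operator: recalling that $\kappa = \half \p\cdot\bigl(\p h / \sqrt{1+|\p h|^2}\bigr)$, the linearization $\kappa \sim \half\Delta h$ is an isomorphism from $\dot H^{a+2}$ onto $\dot H^a$, and under the smallness assumption $\|\p h\|_{W^{3,\infty}_K} \ll 1$ the nonlinear and commutator corrections (which are at least quadratic, cf.\ the ``$+$ cubic'' in the expansion of $\kappa$ and the bounds of Lemma~\ref{com1-est}) can be absorbed. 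The $\|\p h\|_{L^2_{2K}}$ term on the right is needed to recover the bottom-order Fourier piece, since $\kappa$ controls only the homogeneous $\dot H^{a+2}$-type norm. For the second estimate one reads off from \eqref{firstk} that $\D_t(z-h) = 0$ gives $v_3|_{\SCB} = \D_t h$, hence $v^\perp \sim \D_t h$ modulo lower-order terms, and that $\Delta_\SCB v \sim \D_t\kappa$; combined with the div-curl structure of $v$ (irrotational, divergence-free in $\SD$, so $v$ on $\SCB$ is recovered from its normal component by the Dirichlet--Neumann map) this yields the claimed equivalence with $v$ in two more derivatives than $\D_t\kappa$, the extra terms $\|\kappa\|_{W^{a,2}_{2K}}$, $\|\p h\|_{L^2_{2K}}$, $\|v^\perp\|_{L^2_{2K}}$ accounting respectively for curvature-of-the-domain corrections, low frequencies of the geometry, and the vertical/low-frequency part of $v$ not seen by $\Delta_\SCB$.

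Next I would run the induction on $\ell$. Suppose the equivalences hold for all indices up to $\ell - 1$; apply a vector field $\Gamma^i$ with $|i| = \ell \le 2K$ to the defining relations $\kappa = \kappa(\p h)$ and $\Delta_\SCB v \sim \D_t\kappa$. The key algebraic input is the commutator calculus already developed in the excerpt: by \eqref{comxlap}, \eqref{ddn}, and their specializations \eqref{comm-lap}, \eqref{comm-n}, commuting $\CX = \TCS,\TOM$ past $\Delta_\SCB$ or $\CN$ produces (i) a harmless rescaling term (the $-2\Delta_\SCB$, resp.\ $+\CN$, in the $\TCS$ commutators, which is lower order and can be moved to the left side), and (ii) differential operators of the same order whose coefficients are the quantities $w_j, \tilde w_j$ built from $(\p h, \CX\p h, \p^2 h, \dots)$. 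Under the smallness hypothesis these coefficient-dependent terms, when estimated via the product/commutator bounds of Lemma~\ref{com1-est} (splitting low derivatives into $L^\infty$ and high derivatives into $L^2$ in the usual Moser fashion), are controlled by $\ll 1$ times the $W^{k,2}_\ell$ norm one is trying to bound plus lower-order $W^{\cdot,2}_{\ell-1}$ norms handled by the inductive hypothesis. Hence they can be absorbed, closing the induction and yielding both $\lesssim$ directions simultaneously.

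The main obstacle I expect is bookkeeping the \emph{order} of the operators correctly, in particular the fact — emphasized in the Remark preceding this proposition — that $\TCS$ in a commutator behaves like $3/2$ derivatives (since $\D_t^2\CS g - \p^3\CS g \sim (\p^3 g)\p\CS g$), so one must be careful that commuting $\TCS$ does not cost more regularity than the spaces $W^{k,2}_\ell$ (with $k$ a half-integer allowed) provide; this is precisely why $\CA = \Delta_\SCB\CN$ is included among the $\Gamma$'s and treated on equal footing, and why the derivative counts are tracked in multiples of $3$. The second delicate point is the treatment of the non-local term $N\cdot\nabla\Delta_0^{-1}2\,\mathrm{div}(\cdots)$ appearing in every $\CN$-commutator: I would dispatch it exactly as in the proof of Lemma~\ref{com1-est}, using the mapping property $N\cdot\nabla\Delta_0^{-1}\mathrm{div} : H^{s+1/2}(\SD)\to H^s(\SCB)$ from Proposition~\ref{elliptic} together with an extension operator $\SE : H^s(\SCB)\to H^{s+1/2}(\SD)$, so that it acts as a bounded operator on $H^s(\SCB)$ with the coefficient smallness providing the gain. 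Everything else is routine elliptic estimates for $\CN$ on $\SD$ and Moser-type product inequalities.
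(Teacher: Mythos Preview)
Your proposal is correct and follows essentially the same route as the paper's proof: the first equivalence via $\kappa = -(1+|\p h|^2)^{1/2}\Delta_\SCB h$ plus elliptic regularity and the $\CX$--$\Delta_\SCB$ commutators, and the second via the $\D_t\kappa$ equation together with the fact that $v=\nabla\psi_\CH$ is the gradient of a harmonic function so that $\|v\|_{H^2(\SCB)}\lesssim\|v^\perp\|_{H^2(\SCB)}$. The paper is slightly more explicit on the second point, rewriting \eqref{E:dtk2} as $\Delta_\SCB v^\perp = -\D_t\kappa - v^\perp|\Pi|^2 + (\CD\cdot\Pi)(v^\top)$ and invoking \eqref{eqhk} for the harmonic-gradient control, whereas you phrase this as ``$\Delta_\SCB v \sim \D_t\kappa$'' plus ``$v$ recovered from $v^\perp$ via the Dirichlet--Neumann map''; these are the same mechanism, and your digression on the $3/2$-derivative cost of $\TCS$ and the nonlocal $N\cdot\nabla\Delta_0^{-1}\mathrm{div}$ term, while accurate, is more than the argument here actually requires.
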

\proof
The first inequality follows from $\kappa = -(1+|\p h|^2)^{\frac 12} \De_\SCB h$, commuting $\CX$ with $\De_\SCB$,  and standard elliptic theory.  To prove the second inequality we  note that since $v$ is the gradient of a harmonic function, then  by  equation \eqref{eqhk}
\[
\|D^2v \|_{L^{2}(\SCB)}  \lesssim \|v \|_{H^{2}(\SCB)}  \lesssim  \|v^\perp \|_{H^{2}(\SCB)}  \lesssim  \|\Delta_{\p \SCB} v^\perp  \|_{L^{2}(\SCB)} +  \|v^\perp \|_{L^2(\SCB)}
\]
From equation \eqref{E:dtk2}
\[
\Delta_{\SCB} v^\perp=-\D_t \kappa  - v^\perp |\Pi|^2 + (\CD \cdot\Pi)(v^\top),
\]
the commutation relations of $\CX$, and by the fact that  $|\Pi| + |\CD\Pi|\lesssim | \p h| + |D^2h| +|D^3h|$,  we conclude  the stated result.
\endproof
%
%
%
%
%
%
%

Set $\tilde \kappa_j  = \kappa_{2K-j} = \CA^{2K-j}\kappa$, for any $0 \le j\le 2K$. It solves the equation
\begin{equation}
\D_t^2 \tilde\kappa_j =  \Delta_{\SCB} \CN (\tilde \kappa_j)  + \tilde R_j, \label{tildekj}
\end{equation}
where $\tilde R_j = R_{2K-j}$ is defined by \eqref{Rninduc}.
Next, let
$$
\mathscr{F}_j^{\ell} \overset{def}{=} 
\frac{1}{2} \left\| \CD \CN D_t \CX^j \tilde \kappa_{j+\ell} \right\|_{L^2}^2 + \frac{1}{2} \langle \CN \Delta_{\mathscr{B}} \CN \CX^j
\kappa_{j+\ell} , \Delta_{\mathscr{B}} \CN \CX^j \kappa_{j+\ell} \rangle
$$
where $\ell = 0 \dots 2K - j$. Finally, define the weighted energy that will be controlled:
$$
F_n \overset{def}{=} \sum_{j = 0}^n \sum_{\ell = 0}^{2K-j} \mathscr{F}_j^{\ell} + E_{\rm{physical}}.
$$
Notice that
\begin{equation*}
\begin{split}
F_n(t) & \sim \left\| D_t \kappa \right\|_{W^{3(2K-n)+2,2}_n}^2 + \left\| \kappa \right\|_{W^{3(2K-n)+\frac{7}{2},2}_n}^2 + \|\Lambda^{1/2} \psi\|_2^2 + \| \p h \|_2^2 \\
& \sim \left\| v \right\|_{W^{3(2K-n)+4,2}_n}^2 + \left\| \p h \right\|_{W^{3(2K-n)+\frac{9}{2},2}_n}^2 + \|\Lambda^{1/2} \psi\|_2^2 \\
& \sim \left\| \Lambda^{1/2} u \right\|_{W_n^{3(2K-n)+\frac{9}{2},2}}^2.
\end{split}
\end{equation*}
%

To derive the weighted energy estimate we need to commute $\CX$ with the operator $\D_t^2 -\Delta_\SCB\CN$.  The next lemma illustrates how and where terms such as $\CS\p h$ appear in the commutators.  
\begin{lemma}\label{lem:skj} Let $g$ denote a smooth function on $\p\CU$ and write 
\begin{equation}\label{eq:lin-eq}
D^2_t g - \Delta_\SCB \CN g = F \quad \text{ on }\quad \p\CU.
\end{equation}
Then 
\begin{equation}
\begin{split}  \label{eq:xg}
\| \CD\CN [ \CX, \De_{\SCB} \CN- D^2_t  ]  g \|_{L^2}  \lesssim & \left(\|\CX\hd h\|_{\widetilde W^{4,\8}}+ \|\CX v\|_{\widetilde W^{2,\8}}\right)\left(
\|  \hd   g\|_{H^4}+ \|\hd D_tg\|_{H^2}\right)\\
&+ \|\CD\CN F\|_{L^2}.
\end{split}
\end{equation}
\end{lemma}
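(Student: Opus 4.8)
The plan is to expand $[\CX,\De_\SCB\CN-D_t^2]=[\CX,\De_\SCB]\CN+\De_\SCB[\CX,\CN]-[\CX,D_t]D_t-D_t[\CX,D_t]$ and substitute the three commutator identities already established: $[\CX,D_t]$ from \eqref{commute}, $[\CX,\De_\SCB]$ from \eqref{comm-lap}, and $[\CX,\CN]$ from \eqref{comm-n}. Each of those identities has the same shape: an ``eigenvalue'' multiple of the operator being commuted — present only when $\CX=\TCS$, with values $-2$ for $\De_\SCB$, $-1$ for $\CN$, and $-\tfrac32$ for $D_t$ — plus a genuine commutator operator whose coefficients are built from $\nabla^\top X$, $\De_\SCB X$, $\CX N$ and $(DX_\CH)^*$, where $X$ is the spatial part of $\CX$. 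Collecting the eigenvalue contributions, for $\CX=\TCS$ they sum to $-3(\De_\SCB\CN-D_t^2)g=3F$ (using \eqref{eq:lin-eq}), so that after applying $\CD\CN$ they produce exactly the term $3\|\CD\CN F\|_{L^2}$ on the right of \eqref{eq:xg}; for $\CX=\TOM$ there is no such contribution.

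It then remains to bound $\CD\CN$ applied to the genuine commutator operators acting on $g$, which I would do by the two-step recipe used throughout Section~\ref{sectionenergy}. First, identify the coefficients: for $\TCS$ the spatial part $X$ is $(x^1,x^2,z+Z)$ with $Z=\CS h-h$, so $\nabla^\top X$, $\De_\SCB X$, $\CX N$ reduce to expressions in $\p h$ and $\CS\p h$ (i.e.\ $\CX\p h$), while the non-local piece $N\cdot\nabla\Delta_0^{-1}\mathrm{div}$ in $[\TCS,\CN]$ carries $\nabla Z_\CH$; the terms coming from $[\TCS,D_t]D_t$ and $D_t[\TCS,D_t]$ carry $v$, $\CS v$, and $D_t$-derivatives of these, which I would rewrite with the help of \eqref{firstk} and $D_th=v_3$ so as to turn $D_t v$ on $\SCB$ into a curvature term ($\sim\CN\kappa\sim\hd^3 h$), again of $\CX\hd h$ type; for $\TOM$ the analysis is identical with $Z$ replaced by $\omega^3=\Omega h$. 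Second, for each resulting term distribute derivatives as in Lemma~\ref{com1-est} — low ($\le$ half) derivatives of the coefficient in $L^\infty$, high derivatives of $g$ in $L^2$ — using the mapping property $N\cdot\nabla\Delta_0^{-1}\mathrm{div}:H^{s+1/2}(\SD)\to H^s(\SCB)$ of Proposition~\ref{elliptic}, composed with an extension operator, to handle the non-local piece without loss of derivatives. The $W^{k,\infty}$ smallness of $(v,\p h)$ assumed in the lemma makes the implicit constants uniform.

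A careful count of derivatives then yields \eqref{eq:xg}. The worst terms are $\CD\CN[\CX,\De_\SCB]\CN g$ and $\CD\CN\De_\SCB[\CX,\CN]g$: here a second order commutator operator is wrapped by the second order operator $\CD\CN$, so $g$ is differentiated at most five times — matching $\|\hd g\|_{H^4}$ — and the coefficient, of $\CX\hd h$ type, at most four times — matching $\|\CX\hd h\|_{\widetilde{W}^{4,\infty}}$. The non-eigenvalue terms arising from $[\CX,D_t^2]$ are first order in $D_t g$ with coefficient of $\CX v$ type differentiated at most twice by $\CD\CN$, yielding $\|\CX v\|_{\widetilde{W}^{2,\infty}}\|\hd D_t g\|_{H^2}$. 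The main obstacle I anticipate is bookkeeping rather than analysis: tracking the harmonic extensions $Z_\CH$, $\omega^3_\CH$, $DX_\CH$ through the nested compositions with $\CD$, $\CN$, $\De_\SCB$ so that half-derivative gains and losses balance, and checking that after using the evolution equations no term demands more than the two vector-field/derivative counts on $v$ that $\widetilde{W}^{2,\infty}$ allows. No resonance input is needed here — this is a purely energy-type commutator estimate.
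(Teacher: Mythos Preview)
Your approach is essentially the same as the paper's: expand $[\CX,\De_\SCB\CN-D_t^2]$ as $\De_\SCB[\CX,\CN]+[\CX,\De_\SCB]\CN-D_t[\CX,D_t]-[\CX,D_t]D_t$, substitute the commutator identities \eqref{commute}, \eqref{comm-lap}, \eqref{comm-n}, collect the ``eigenvalue'' pieces (for $\CX=\TCS$) into a multiple of $F$, and bound the remaining genuine commutator operators by placing coefficients in $L^\infty$ and $g$-factors in $L^2$. Your derivative count is correct, and your eigenvalue computation $-3(\De_\SCB\CN-D_t^2)g=3F$ matches the statement of the lemma; the paper's proof writes $3D_tF$ here, which appears to be a typo. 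Your plan to convert the $D_t\CS v$ coefficient arising from $D_t[\TCS,D_t]g$ into a $\CX\hd h$--type quantity via Euler on the boundary is a sound way to make the bookkeeping close.
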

\proof
For $\CX=\CS$, we have from  the commutation relations \eqref{comm-lap} and \eqref{comm-n}  \begin{align*}
 [\TCS, \Delta_\SCB \CN -\D^2_t]  g =&\left( \Delta_\SCB[\TCS,\CN] + [\TCS, \Delta_\SCB]\CN  - \D_t[\TCS,D_t] - [\TCS,D_t]D_t\right) g\\[.3em]
 =& \Delta_\SCB  \left( \nabla_{\CS N} g_{\CH} -( \CN  Z )\p_z g_{\CH}  + N\cdot \nabla  \Delta_0^{-1} 2{\rm div}(( \nabla Z_\CH) \p_z g_{\CH} )\right) \\[.3em]
 &  -\left(2 \Delta_{\SCB} \CN g + 2\CD^2\CN g \cdot \nabla^\top (Z\mathbf{k}) + \left( \Delta_\SCB S \right)\cdot (\nabla^\top \CN g ) +\kappa\CS N\cdot (\nabla^\top\CN  g )\right)\\[.3em]
 &-D_t(\frac12v^i+\CS v^i)\p_i g -(\frac12v^i+\CS v^i)\p_iD_t  g +3 D_t F.
\end{align*}
Writing  $A_{\nabla Z_\CH}(\p_z g) =  \Delta_\SCB\left( N\cdot \nabla  \Delta_0^{-1} 2{\rm div}(( \nabla Z_\CH) \p_zg_{\CH}\right)$ (which is an operator of order 2), 
this commutator can be written as 
\[
\begin{split}
[ \TCS, \De_{\SCB} \CN- D^2_t  ]  g = &  A_3 (\TCS \hd h) \hd^3  g + A_2 (\TCS\hd^2  h) \hd^2  g
+ A_1 ( \TCS\hd^3 h) \hd  g \\
&+ A_{\nabla Z_\CH}(\p_z g_{\CH}) +  B_1(v,\TCS v)\hd \D_t  g + 3 D_t F,
\end{split}
\]
where we exhibited the dependence of the coefficients  on the least regular terms and terms  with the least number of $\hat D$. 
Here we introduced the notation $A_3\hd^3 =\sum_{i=1}^{3} A^i_3 \CN^iD^{3-i}$, and so on. This implies 
\[
\begin{split}
\| \CD\CN [ \TCS, \De_{\SCB} \CN- D^2_t  ]  g \|_{L^2} & \lesssim \left(\|\CS\hd h\|_{\widetilde W^{4,\8}}+ \|\CS v\|_{\widetilde W^{2,\8}}\right)\left(
\|  \hd   g\|_{H^4}  + \|\hd D_tg\|_{H^2}\right)+ \|\CD\CN D_t F\|_{L^2}.
\end{split}
\]
Similar estimates hold for $\CX=\Omega$.
\endproof
\begin{remark}
Note that this lemma, when combined with Lemma \ref{linear:prop}, gives an estimate with $1/2$ regular derivatives loss on $g$.
However since the loss is in regular derivatives, we can handle this by regular energy estimates. 
Moreover since terms like $\CS\hd h$ decay at a rate less than $t^{-1}$, this will cause a cascading effect on the growth rate of the  weighted energy estimates.
\end{remark}

Bounds for $F_n(t)$  will be derived under the assumptions
\begin{equation}\label{assume2}
\begin{cases}\tag{a2}
F_0(t) \lesssim \epsilon^2 t^{2\delta}, \; 
\|v\|^2_{W^{4,2}_K}+ \|\p h\|^2_{W^{{9/2},2}_K} \lesssim  \epsilon, \mbox{ for } \epsilon \ll 1\\
a(t) \overset{def}{=} \|\p h\|_{{\widetilde W}_{K}^{9/2 ,\infty}} +\| v\|_{{\widetilde W}_{K}^{4,\infty}} \lesssim \epsilon t^{-1+\delta},  \quad   t\ge 1,\\
b(t) \overset{def}{=} \|\p^2 h\|_{{\widetilde W}_{K}^{7/2 ,\infty}} +\| D v\|_{{\widetilde W}_{K}^{3,\infty}} \lesssim \epsilon t^{-1},  \quad t\ge 1,
\end{cases}
\end{equation}
which follow from $\|u\|_{\rm{decay}} \lesssim \epsilon$.

\begin{proposition}\label{prop:energy} Under assumption \eqref{assume2}, solutions of \eqref{E:dttk2} with initial  data 
\[
\|v_0\|^2_{W^{4,2}_{2K}}+ \|\p h_0\|^2_{W^{9/2,2}_{2K}} \le \epsilon^2 \ll1,
\]
satisfy
\begin{equation}\label{energyw}
F_j(t) \lesssim \epsilon^2 t^{2(j+1)\delta},
\end{equation}
for $0\le j \le 2K$.
\end{proposition}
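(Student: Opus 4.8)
The plan is to prove \eqref{energyw} by induction on $j$, mirroring the structure of the proof of Proposition~\ref{prop:hs} but tracking the cascading growth rates. For $j=0$ the estimate is part of assumption \eqref{assume2}. For the inductive step, suppose $F_{j-1}(t) \lesssim \epsilon^2 t^{2j\delta}$; we must then control $F_j$. The key is the differential inequality for $\dot{\mathscr{F}}_j^\ell$ obtained by combining Lemma~\ref{linear:prop} (applied to $g = \CX^j \tilde\kappa_{j+\ell}$) with Lemma~\ref{lem:skj} (to handle the commutator $[\CX, \Delta_\SCB\CN - D_t^2]$ that arises when we push $\CX^j$ through the equation \eqref{tildekj}). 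Schematically, setting $g = \CX^j \tilde\kappa_{j+\ell}$, the function $g$ solves $D_t^2 g - \Delta_\SCB\CN g = \tilde F_j^\ell$, where $\tilde F_j^\ell$ collects $\CX^j \tilde R_{j+\ell}$ together with all the commutator terms from iterating $[\CX, \Delta_\SCB\CN - D_t^2]$ down through $\CX^{j-1}, \dots, \CX^0$.

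The main estimates then break into two parts. First, by Lemma~\ref{linear:prop},
$$
\dot{\mathscr{F}}_j^\ell \lesssim \|(Dv, \p h\, Dv, \p^2 h)\|_{W^{1,\infty}} \bigl(\mathscr{F}_j^\ell + \text{l.o.t.}\bigr) + \|\CD\CN \tilde F_j^\ell\|_{L^2}\sqrt{\mathscr{F}_j^\ell}.
$$
The first term is harmless: by the $b(t)\lesssim \epsilon t^{-1}$ bound in \eqref{assume2} it contributes $\frac{\epsilon}{t}F_j$, giving at most $t^{O(\epsilon)}$ growth, absorbed into $\delta$. The real work is bounding $\|\CD\CN \tilde F_j^\ell\|_{L^2}$. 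The contribution of $\CX^j\tilde R_{j+\ell}$ (the original nonlinearity, via \eqref{Rninduc} and \eqref{r0approx}) is estimated as in Proposition~\ref{prop:hs}: low-order factors in $L^\infty$ via $a(t), b(t)$, high-order factors in $L^2$ via $F_j$, yielding $\lesssim \epsilon t^{-1+\delta}\sqrt{F_j}$ (the $t^{-1+\delta}$ rather than $t^{-1}$ because one factor must now be measured with weighted decay $a(t)$). The commutator terms from Lemma~\ref{lem:skj} are where the cascade appears: the estimate \eqref{eq:xg} produces factors $\|\CX\hd h\|_{\widetilde W^{4,\infty}}$ or $\|\CX v\|_{\widetilde W^{2,\infty}}$, which we do \emph{not} control by the decay norm but rather by $\sqrt{F_1}\lesssim \epsilon t^{\delta}$ (only $\sqrt{F_0}$-type weighted quantities have the slow $L^\infty$ decay; once a $\CX$ hits, we are forced to use $L^2$-based weighted energies), times the high-regularity factor $\|\hd\, \CX^{j-1}\tilde\kappa_{j+\ell}\|_{H^4}$ etc., which is bounded by $\sqrt{F_{j-1}}\lesssim \epsilon t^{j\delta}$ using Proposition~\ref{psidtk} to convert between $v, \p h, \kappa$ norms. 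Putting these together, $\|\CD\CN\tilde F_j^\ell\|_{L^2}\lesssim \epsilon t^{-1+\delta}\sqrt{F_j} + \epsilon^2 t^{(j+1)\delta - ?}$; the bookkeeping is arranged so that integrating $\dot{\mathscr{F}}_j^\ell \lesssim \frac{\epsilon t^\delta}{t}\sqrt{F_j}\sqrt{F_{j-1}} + \dots$ and invoking $F_{j-1}\lesssim \epsilon^2 t^{2j\delta}$ yields $\sqrt{F_j(t)} \lesssim \epsilon(1 + \int_1^t s^{-1+\delta} s^{j\delta}\,ds) \lesssim \epsilon\, t^{(j+1)\delta}$, i.e. \eqref{energyw}. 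Summing over $\ell$ and over the $j$-index completes the step.

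I expect the main obstacle to be the careful bookkeeping of which norm each factor in the commutator and nonlinear terms gets assigned to, so that (i) no factor demands more than $3(2K-j)+9/2$ derivatives or more than $j$ vector fields — i.e. everything fits inside $F_j$ or a lower $F_i$ — and (ii) the total time-growth exponent comes out to exactly $2(j+1)\delta$ and not worse. The $1/2$-derivative loss flagged in the remark after Lemma~\ref{lem:skj} must be absorbed by the regular ($\CA$-) energy estimates, meaning one has to interleave the weighted commutation of $\CX$ with extra applications of $\CA=\Delta_\SCB\CN$; keeping the indices consistent through this interleaving, while ensuring the elliptic conversions of Proposition~\ref{psidtk} and the commutator bounds of Lemma~\ref{com1-est} and Lemma~\ref{lem:skj} apply at the required regularity, is the delicate part. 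A secondary subtlety is that $\tilde R_j = R_{2K-j}$ via \eqref{Rninduc} is itself a sum with up to $2K$ applications of $\CA$, so one must check that commuting $\CX^j$ into it and then measuring in $L^2$ never exceeds the total budget $2K$ of vector fields plus $\CA$'s encoded in $F_{2K}(0) \lesssim \epsilon^2$.
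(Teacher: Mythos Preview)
Your inductive skeleton matches the paper's, but the treatment of the commutator terms contains a genuine error that would prevent the argument from closing. You write that the factors $\|\CX\hd h\|_{\widetilde W^{4,\infty}}$ and $\|\CX v\|_{\widetilde W^{2,\infty}}$ coming from Lemma~\ref{lem:skj} are ``\emph{not} control[led] by the decay norm but rather by $\sqrt{F_1}\lesssim \epsilon t^{\delta}$''. This is impossible: $F_1$ is an $L^2$-based energy and cannot bound an $L^\infty$ quantity. More importantly, the parenthetical ``once a $\CX$ hits, we are forced to use $L^2$-based weighted energies'' is exactly backwards. Assumption~\eqref{assume2} \emph{does} supply weighted $L^\infty$ decay with up to $K$ vector fields: the quantity $a(t)=\|\p h\|_{\widetilde W^{9/2,\infty}_K}+\|v\|_{\widetilde W^{4,\infty}_K}\lesssim \epsilon t^{-1+\delta}$ is precisely what controls $\|\CX^{\ell+1}\hd h\|_{\widetilde W^{4,\infty}}$ for $\ell\le K-1$. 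Without this decaying $L^\infty$ input there is no $t^{-1}$ in the source term, and your ODE $\dot{\mathscr F}_j^\ell\lesssim \frac{\epsilon t^\delta}{t}\sqrt{F_j}\sqrt{F_{j-1}}$ does not follow from what you wrote above it.

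The mechanism the paper actually uses, which you are missing, is a splitting according to where the vector fields land. When $\CX^j$ is distributed by Leibniz across the bilinear commutator structure, one factor receives $\ell+1$ vector fields and the other $j-\ell-1$. Since $j\le 2K$, at least one of these counts is $\le K$. If $\ell+1\le K$, put $\CX^{\ell+1}\hd h$ in $\widetilde W^{2,\infty}$ and bound it by $a(t)\lesssim\epsilon t^{-1+\delta}$, while the partner $\hd^3\CX^{j-\ell-1}\tilde\kappa_j$ goes in $H^2$ and is controlled by $\sqrt{F_{j-\ell-1}}\lesssim\epsilon t^{(j-\ell)\delta}$ via the inductive hypothesis; the product is $\lesssim\epsilon^2 t^{-1+(j+1)\delta}$. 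If instead $\ell\ge K$, reverse the roles: the high-vector-field factor goes in $L^2$ (controlled by $\sqrt{F_j}$) and the low-vector-field factor goes in $L^\infty$ (controlled by $b(t)\lesssim\epsilon t^{-1}$), yielding $\epsilon t^{-1}\sqrt{F_j}$. Together these give $\|\CD\CN\CR_j\|_{L^2}\lesssim \frac{\epsilon}{t}\sqrt{F_j}+\frac{\epsilon^2}{t^{1-(j+1)\delta}}$, hence $\frac{d}{dt}\sqrt{F_j}\lesssim\frac{\epsilon}{t}\sqrt{F_j}+\frac{\epsilon^2}{t^{1-(j+1)\delta}}$, which integrates to \eqref{energyw}. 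Your final integral is the right one, but you need this $L^\infty/L^2$ allocation---driven by the weighted decay $a(t)$, not by $\sqrt{F_1}$---to reach it.
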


\begin{proof} \underline{Step 1: the ODE controlling $F_j$.} The proof will be constructed inductively on $j$.  The case $j=0$ is the non-weighted estimates.
Since the difficulty in controlling $F_j$ is in controlling high order weighted derivatives, we will only keep track of the highest order weighted derivative terms in $F_j$,
that is $\mathscr{F}_j^0$.  
Recall that     $\tilde\kappa_j=\CA^{2K-j}\kappa$ satisfies \eqref{tildekj}, and that
\begin{equation}
\SCF_j^0 (t)  =\frac 12\|\CD\CN \D_t \CX^j \tilde\kappa_j\|_{L^2} ^2 + \frac 12 \langle \CN \Delta_\SCB \CN  \CX^j\tilde\kappa_j,   \Delta_\SCB\CN \CX^j \tilde\kappa_j\rangle.
\end{equation}

For $ 1\le j \le 2K$,   $\CX^j\tilde\kappa_j$ satisfies the equation
\begin{align}
&\D_t^2\CX^j \tilde\kappa_j =  \Delta_{\SCB} \CN (\CX^j \tilde \kappa_j)  + \CR_j,\\
&\CR_{j} \overset{def}{=} \sum_{i=1}^{j}\CX^{{j}-i} \left([\CX, \Delta_\SCB\CN - \D^2_t] \CX^{i-1}\tilde \kappa_{j}\right)  + \CX^{{j}}{\tilde R}_j,  \label{eq:calr}
\end{align}
and consequently from Lemma \ref{linear:prop} we have
\be\label{dte}
\begin{split}
\frac d{dt} \SCF_j^0 (t) =& \langle ( v^\perp \kappa + \CD \cdot v^\top )\CD  \CN\D_t\tilde \kappa_j  ,\CD  \CN\D_t \tilde \kappa_j \rangle 
+ 2\langle [D_t, \CD] \CN D_t\tilde\kappa_j,  \CD  \CN\D_t\tilde\kappa_j \rangle + 2 \langle\CD  [D_t, \CN ]D_t\tilde\kappa_j,   \CD \CN\D_t\tilde\kappa_j \rangle  
\\ &+ \langle[D_t, \CN] \Delta_\SCB\CN \tilde \kappa_j,   \Delta_\SCB\CN \tilde \kappa_j \rangle  
+2 \langle \CN[D_t , \Delta_\SCB\CN] \tilde \kappa_j,   \Delta_\SCB\CN \tilde \kappa_j \rangle +  \langle \CD \CN {\CR}_j,   \CD \CN\D_t\tilde \kappa_j \rangle,
\end{split}
\ee
and
\begin{align*}  
\frac d{dt} \SCF_j^0   \lesssim    \|(Dv, \p h Dv,\p^2 h)\|_{\widetilde W^{3, \infty}}\mathscr{F}_j  + \|\CD\CN \CR_1\|_{L^2}\sqrt{ \SCF_j^0 } 
\lesssim \frac{\epsilon}t \mathscr{F}_j  + \|\CD\CN \CR_j\|_{L^2}\sqrt{ \SCF_j^0 }.
\end{align*}
Thus to prove the proposition we have to bound $\CR_j$, which will be done inductively. 

\bigskip

\noindent
\underline{Step 2: the case $j=1$.}
For $j=1$
\[
\begin{split}
&\CR_{1} =[\CX, \Delta_\SCB\CN - \D^2_t] \tilde \kappa_{1} + \CX{\tilde R}_1
\end{split}
\]
has to be bounded carefully due to the presence of $\CX\p h$ in the commutators whenever  $\CX=\CS$ or $\Omega$.
To estimate this term when $\CX= \CS$, we use Lemma \ref{lem:skj} to obtain 
\[
\begin{split}
\| \CD\CN [ \TCS, \De_{\SCB} \CN- D^2_t  ]  \tilde\kappa_1 \|_{L^2} & \lesssim \left(\|\CS\hd h\|_{\widetilde W^{4,\8}}+ \|\CS v\|_{\widetilde W^{2,\8}}\right)\left(
\|  \hd    \tilde\kappa_1\|_{H^4}  + \|\hd D_t  \tilde\kappa_1\|_{H^2}\right)+ \|\CD\CN D_t \tilde R_1\|_{L^2}
\end{split}
\]
From  the regular energy estimates  and   assumption  \eqref{assume2}  we can bound the right-hand side as follows
\begin{align*}
&\|  \hd    \tilde\kappa_1\|_{H^4}  + \|\hd D_t  \tilde\kappa_1\|_{H^2}\lesssim \sqrt{E_{2K}} \lesssim  \epsilon
 t^\delta\\[.3em]
& \|\CD\CN D_t \tilde R_1\|_{L^2}   \lesssim  \frac \epsilon{t}\sqrt{E_{2K}} \lesssim \frac {\epsilon^2} {t^{1-\delta}},\\[.3em]
&\|\CS\hd h\|_{\widetilde W^{4,\8}}+ \|\CS v\|_{\widetilde W^{2,\8}}  \lesssim \frac \epsilon {t^{1-\delta}}.
\end{align*}
Thus $  \| \CD\CN [ \TCS, \De_{\SCB} \CN- D^2_t  ]  \tilde\kappa_1 \|_{L^2}    \lesssim  \frac {\epsilon^2} {t^{1-2\delta}}$.
Similarly, observe that
\[
\tilde R_1 = R_{2K -1}= \sum_{i=1}^ {2K -1}\CA^{ {2K -1}-i} \left(D_t[D_t,\CA]\kappa_{i-1} + [D_t,\CA]\D_t\kappa_{i-1}\right)  + \CA^{ {2K -1}}R_0,
\]
is a sum of products of the following type: a function which carries the highest number of derivative, which can be estimated in $L^2$; and a function
that decays $\sim \frac{1}{t}$ in $L^\infty$ due to Assumption~(\ref{assume2}). Therefore, we get
\[
\| \CD\CN \CS \tilde R_1   \|_{L^2}  \lesssim     \frac \epsilon{t}\sqrt{F_1},
\]
Similar estimates  holds when $\CX=\Omega$.  
Thus  we conclude 
\[
\frac d{dt} \sqrt{\SCF^0_1(t)}  \lesssim  \frac \epsilon{t}\sqrt{F_1}  +\frac {\epsilon^2} {t^{1-2\delta}}  \sqrt{F_0} \implies 
\frac d{dt} \sqrt{F_1(t)}  \lesssim  \frac \epsilon{t}\sqrt{F_1}  + \frac{\epsilon^2}{t^{1-2\delta}}
\implies F_1(t)  \lesssim \epsilon^2 t^{4\delta},
\]
which proves the proposition for $j=1$.   

\bigskip

\noindent
\underline{Step 3: the case $j\geq 2$.}
For $j\ge2$  we proceed inductively in $j$ to estimate  $\|\CD\CN \CR_j\|_{L^2}$. Assume we have verified the proposition for $j-1$. 
To show it holds for $j$ we have to apply  $\CX^j$ to the $\tilde\kappa_j$ equation. Recall that
\[
\begin{split}
[ \CX, \De_{\SCB} \CN- D^2_t  ]  \CX^{i-1}\tilde \kappa_{j} = &  A_3 (\CX \hd h) \hd^3  \CX^{i-1}\tilde \kappa_{j} + A_2 (\CX\hd^2  h) \hd^2  \CX^{i-1}\tilde \kappa_{j}
+ A_1 ( \CX\hd^3 h) \hd  \CX^{i-1}\tilde \kappa_{j} \\
&+ A_{\nabla Z_\CH}(\p_z \CX^{i-1}\tilde \kappa_{j \CH}) +  B_1(\CX v )\hd \D_t  \CX^{i-1}\tilde \kappa_{j} + 3 D_t \CX^{i-1}\tilde \kappa_{j}.
\end{split}
\]
In equation \eqref{eq:calr}, these terms
generate quadratic terms of the form
\[
\begin{split}
A_3 ({\CX^{\ell+1}}\hd h) \hd^3\CX^{j-\ell-1}  \tilde\kappa_j+ &A_2 (\CX^{\ell+1}\hd^2  h) \hd^2\CX^{j-\ell-1} \tilde\kappa_j\\
+ &A_1 ( {\CX^{\ell+1}}\hd^3 h) \hd \CX^{j-\ell-1}\tilde\kappa_j +  B_1({\CX^{\ell+1}}v )\hd \D_t \CX^{j-\ell-1} \tilde\kappa_j,
\end{split}
\]
(for $ 0\le \ell \le j-1$) as well as terms of the type
\begin{equation}
\label{albatros}
\begin{split}
& \Delta_\SCB \left( N\cdot \nabla  \Delta_0^{-1} 2{\rm div}(( \CX_\CH^{\ell}  \nabla X_\CH)  \p_z \CX^{j-\ell-1}\tilde \kappa_{j \CH}\right),\quad 0\le \ell \le j-1 \\
&   \Delta_\SCB\left( N\cdot \nabla  \Delta_0^{-1} 2\CX_\CH^{\ell -1} \left( (\nabla X_\CH)\cdot(  \nabla X_\CH \p_z \CX^{j-\ell-1}\tilde \kappa_{j \CH} )\right)\right), \quad 1\le \ell \le j-1,
\end{split}
\end{equation}
and finally terms which more regular and of higher degree.

The first term in~(\ref{albatros}) is easy to bound, while the second term term gives rise to smooth regular cubic terms and to quartic terms of order $-1$ of the form 
\begin{equation}
 (\Delta_\SCB N)\cdot \nabla \Delta_0^{-1}\CX_\CH^{\ell-1} \left( (\nabla X_\CH)\cdot(  \nabla X_\CH \p_z\CX^{j-\ell-1}\tilde \kappa_{j \CH}  )\right)
\end{equation}
Thus all these commutations lead to quadratic terms 
\begin{equation}
 \begin{split}
Q_j = \sum_{\ell=0}^{j-1}&
  A_3 ({\CX^{\ell+1}}\hd h) \hd^3\CX^{j-\ell-1}  \tilde\kappa_j+ A_2 ({\CX^{\ell+1}}\hd^2  h) \hd^2\CX^{j-\ell-1} \tilde\kappa_j\\
+ &A_1 ( {\CX^{\ell+1}}\hd^3 h) \hd \CX^{j-\ell-1}\tilde\kappa_j +  B_1({\CX^{\ell+1}}v )\hd \D_t \CX^{j-\ell-1} \tilde\kappa_j\\
\end{split}
\end{equation}
quartic terms of order $-1$ of the form 
\begin{equation}
\CK_j =  ( \Delta_\SCB\ N) \cdot \nabla  \Delta_0^{-1} 2\CX_\CH^{\ell -1} \left( (\nabla X_\CH)\cdot(  \nabla X_\CH \p_z \CX^{j-\ell-1}\tilde \kappa_{j \CH} )\right),  \quad 1\le \ell \le j-1,
\end{equation}
and more regular degree 3 or higher terms.  Thus 
\[
\CR_j = Q_j + \CK_j +   \CX^j{\tilde R}_{0} + \mbox{ similar or more regular terms of  degree 3 or higher}.
\]
To bound $\| \CD\CN \CR_{j}\|^2_{L^2}$,  we deal first with the quadratic terms $Q_{j}$.  In this case  
we bound each term in $L^2$  or  $L^\8$, depending on   whether  $\ell \ge K$ or $\ell \le K-1$.
To estimate $A_3 ({\CX^\ell\CS}\hd h) \hd^3\CX^{j-\ell-1}  \tilde\kappa_j$, with $\ell \ge K$,  we proceed as follows: 
\[
\begin{split}
\| \CD\CN   A_3 ({\CX^\ell\CS}\hd h) \hd^3\CX^{j-\ell-1}  \tilde\kappa_j\|_{L^2} & \lesssim 
\|\  \hd^3\CX^{j-\ell-1}  \tilde\kappa_j\|_{\widetilde W^{2,\8}}
\| {\CX^\ell\CS}\hd h \|_{H^2}.
\end{split}
\]
The above expression is bounded by $\epsilon t^{-1}\sqrt{F_j}$.  

For $\ell\le K-1$,  
\[
\begin{split}
&\|  {\CX^\ell\CS}\hd h \|_{\widetilde W^{2,\8}}  \lesssim\frac{\epsilon}{ t^{1-\delta}}, \quad   \text{by assumption \eqref{assume2}},\\
 &\|    \hd^3\CX^{j-\ell-1}  \tilde\kappa_j \|_{H^2}  \lesssim \sqrt{ F_{j-\ell-1}} \lesssim \epsilon  t^{(j-\ell)\delta}, \quad  \text{by the inductive step}.
\end{split}
\]  
Therefore 
\[
\begin{split}
\| \CD\CN   A_3 ({\CX^\ell\CS}\hd h) \hd^3\CX^{j-\ell-1}  \tilde\kappa_j\|_{L^2} & \lesssim 
\|  {\CX^\ell\CS}\hd h \|_{\widetilde W^{2,\8}} 
\|    \hd^3\CX^{j-\ell-1}  \tilde\kappa_j \|_{H^2}  \lesssim 
\frac{\epsilon^2} {t^{1-(j + 1)\delta}}.
\end{split}
\]
The remaining terms in $Q_{j}$ are estimated in the same manner.
The cubic terms are much easier to bound since they appear from multiple commutations of $\CX$ and thus  they have fewer derivatives and better $t$ decay.

The quartic terms $\|\CD\CN\CK_j   \|_{L^2}$ can be  estimated in the following manner: when $\CD\CN$ hits $\Delta_\SCB N$ we bound the term by
\begin{multline*}
\left\|(\CD\CN \Delta_\SCB N)\cdot \nabla \Delta_0^{-1}  \CX_\CH^{\ell -1} \left( (\nabla Z_\CH)\cdot(  \nabla Z_\CH \p_z \CX^{j-\ell-1}\tilde \kappa_{j \CH} )\right)\right\|_{L^2} \\
\lesssim \left\|\p h\right\|_{H^4}\left\| \nabla \Delta_0^{-1}  \CX_\CH^{\ell -1} \left( (\nabla Z_\CH)\cdot(  \nabla Z_\CH \p_z \CX^{j-\ell-1}\tilde \kappa_{j \CH} )\right)\right\|_{L^\8(\SD)} \\
\lesssim\|\p h\|_{H^4} \left\|  \CX_\CH^{\ell -1} \left( (\nabla Z_\CH)\cdot(  \nabla Z_\CH \p_z \CX^{j-\ell-1}\tilde \kappa_{j \CH} )\right)\right \|_{H^1(\SD)}
\end{multline*}
By distributing the  $\CX_\CH^{\ell -1}$ vector fields on the cubic term  we end up with a highest  derivative of $\CX^{j-2}$,  and at least one term with less than $K$ derivatives.  Thus
\[
\left\|(\CD\CN \Delta_\SCB N)\cdot \nabla \Delta_0^{-1}  \CX_\CH^{\ell -1} \left( (\nabla Z_\CH)\cdot(  \nabla Z_\CH \p_z \CX^{j-\ell-1}\tilde \kappa_{j \CH} )\right)\right\|_{L^2} \lesssim  \frac{\epsilon^2} {t^{1-(j + 1)\delta}}.
\]
The remaining terms are quartic with fewer derivatives and are of positive order, thus their estimates are straightforward.

This leaves $\|\CD\CN\CX^{j}{\tilde R}_0\|_{L^2}$ to estimate. Recall that ${\tilde R}_0 = R_{2K-j}$ is given by \eqref{Rninduc}
\begin{align*}
R_{2K-j} =\sum_{i=1}^{2K-j}\CA^{n-i} \left(D_t[D_t,\CA]\kappa_{i-1} + [D_t,\CA]\D_t\kappa_{i-1}\right)  + \CA^{2K-j}R_0,
\end{align*}
so these terms can be bounded by
\[
\|\CD\CN\CX^{j}{\tilde R}_0\|_{L^2} \lesssim \frac{\epsilon}{t} \sqrt{F_j}.
\]
Putting all of these bounds together implies
\[
\frac d{dt} \CF_{j} (t)  \lesssim \frac \epsilon t \sqrt{F_{j}}  +  \frac{\epsilon^2} {t^{1-(j + 1)\delta}},
\]
which gives the desired estimate after summing over $j$.
\end{proof}

\section{Decay estimates}

\label{sectionnonlindecay}

\subsection{Rewriting the equation}

\label{subsecrewrit}

Recall that the equations for the trace of the potential and the graph of the surface are given by (see \eqref{Lin})
 \begin{equation*}
\left\{
\begin{array}{l} \partial_t h = \Lambda \psi - \nabla \cdot (h \nabla \psi) - \Lambda (h\Lambda \psi)  +\tilde R_1 \\
\partial_t \psi = \Delta h - \frac{1}{2}|\nabla \psi|^2 + \frac{1}{2} |\Lambda \psi|^2  + R_2\end{array}
\right.   
\end{equation*}
where $\tilde R_1$ and $R_2$ are terms of degree $3$ and higher,  and where 
$\Lambda$ is the Fourier multiplier of symbol $|\xi|$.
By introducing the variable  $H = \Lambda^{1/2} h$, the above equations can be written as
\begin{equation}
\label{aa}
\left\{
\begin{array}{l} \partial_t H = \Lambda^{3/2} \psi -  \Lambda^{1/2} 
 \nabla \cdot (  \Lambda^{-1/2}  H \nabla \psi) - \Lambda^{3/2} (  \Lambda^{-1/2}  H   \Lambda \psi)  +R_1 \\
\partial_t \psi = -\Lambda^{3/2}  H   - \frac{1}{2}|\nabla \psi|^2 + \frac{1}{2} |\Lambda \psi|^2   +R_2,
\end{array}
\right. 
\end{equation}
where $R_1 \overset{def}{=} \Lambda^{1/2}\tilde R_1  $.   The above system can be written in a compact form in terms of the complex variable  $u = H + i \psi$:
\begin{equation}
\label{aau} \partial_t u = - i \Lambda^{3/2} u + T_{m_{++}}(u,u) + T_{m_{--}}(\bar u,\bar u) + T_{m_{+-}}(u,\bar u) + R,
\end{equation}
where $m_{\pm \pm}$ are linear combinations of 
\begin{equation}
\begin{aligned} \label{m1m2} 
& m_1(\xi,\eta) \overset{def}{=} \frac{|\xi|^{1/2}}{|\eta|^{1/2}}   \left(\xi \cdot (\xi-\eta) - |\xi||\xi-\eta| \right), \\
& m_2(\xi,\eta) \overset{def}{=}  \eta \cdot (\xi-\eta) + |\eta||\xi-\eta|, \\
& \mbox{and} \;\; m_1(\xi,\xi-\eta),
\end{aligned}
\end{equation}
and we set $R \overset{def}{=}R_1+i R_2$. 
The next step is to define
$$
f(t) \overset{def}{=} e^{it\Lambda^{3/2}} u
$$
and to write Duhamel's formula for $f$:
\begin{multline}
\label{eqfourier}
\widehat{f}(t,\xi)  =  \widehat u_0(\xi) + \sum_{(\tau_1,\tau_2) = (++),(+-),(--)} \int_0^t 
\!\!\int e^{is\phi_{\tau_1,\tau_2}(\xi,\eta)} m_{\tau_1 \tau_2} (\xi,\eta) \widehat{f_{-\tau_1}}(s,\eta) \widehat{f_{-\tau_2}}(s,\xi-\eta)\,d\eta \,ds \\ 
\quad \quad \quad + \int_0^t e^{is |\xi|^{3/2}} \widehat{R} (s,\xi )\,ds, \\
\end{multline} 
where $f_{+} \stackrel{def}{=} f$, $  f_{-} \stackrel{def}{=} \bar{f}$, and the phases  are given  by
\begin{equation*}
\begin{aligned}
& \phi_{\pm,\pm}(\xi,\eta) = |\xi|^{3/2} \pm |\eta |^{3/2} \pm |\xi-\eta|^{3/2}.
\end{aligned}
\end{equation*}

Sometimes, the exact structure of~(\ref{aau}) will not matter and we will write it in a simplified form
\begin{equation}
\label{simpleform}
\partial_t u = - i \Lambda^{3/2} u + T_{m}(u,u) + R,
\end{equation}
with $m$ being a linear combination of $m_1$ and $m_2$.

We will also often write indistinctively $f$ for $f$ and $\bar f$; similarly, we will not distinguish between $e^{is|D|^{3/2}}$ and $e^{-is|D|^{3/2}}$.
This alleviates somewhat the notations, and has of course no impact on the estimates.

\subsection{Examination of the bilinear symbols and the phases}

\label{eotbsatp}

\subsubsection{The symbols}

Start with the two multilinear symbols
\begin{equation*}
\begin{aligned}
& \tilde m_1(\xi,\eta) \overset{def}{=}  \xi \cdot (\xi-  \eta)  - |\xi|| \xi - \eta|  \\
& \tilde m_2(\xi,\eta) \overset{def}{=}  \eta \cdot (\xi-\eta) + |\eta||\xi-\eta|  
\end{aligned}
\end{equation*}
Due to the relation $\tilde m_1(\eta,\xi) = - \tilde m_2 (-\xi,-\eta ) $, it is equivalent to prove estimates for either of these symbols. 

Using the simple fact that
$$
|X+\eps| = |X| + X\cdot \frac{\eps}{|X|}  + \frac12 \left(\frac{|\eps|^2}{|X|} \right) 
 - \left(\frac{(\eps \cdot X)^2}{|X|^3}   \right)  + O \left(\frac{|\eps|^3}{|X|^2}  \right),  
$$
we can deduce that for $|\eta|  << |\xi|  $, 
\begin{equation*}
\begin{aligned}
\tilde m_1(\xi,\eta)  & = |\xi|^2 - \eta \cdot \xi - |\xi| \left( |\xi| + \xi \cdot \frac{\eta}{|\eta|} + O \left( \frac{|\eta|^2}{|\xi|} \right) \right) \\
 & = O (|\eta |^2). 
\end{aligned}
\end{equation*}
Therefore
\begin{itemize}
\item If $|\xi|  << |\eta|  $, then   $ | \tilde m_1(\xi,\eta) |  \lesssim |\xi| |\eta|  $ 
\item If $|\eta|  << |\xi|  $, then   $ | \tilde m_1(\xi,\eta) |  \lesssim |\eta|^2 $ 
\item If $|\xi-\eta|  << |\eta|  $, then   $ | \tilde m_1(\xi,\eta) |  \lesssim |\xi-\eta| |\eta|  $ 
\end{itemize}
and thus
\begin{itemize}
\item If $|\xi|  << |\eta|  $, then   $ | \tilde m_2(\xi,\eta) |  \lesssim |\xi|^2  $ 
\item If $|\eta|  << |\xi|  $, then   $ | \tilde m_2(\xi,\eta) |  \lesssim |\eta| |\xi| $ 
\item If $|\xi-\eta|  << |\eta|  $, then   $ | \tilde m_2(\xi,\eta) |  \lesssim |\xi-\eta| |\eta|  $.  
\end{itemize}
In terms of the classes defined in Appendix~\ref{appendixbilin}, this means that $\tilde m_1(\xi,\eta) \in \mathcal{M}^{2,1,2,1}$ and 
$\tilde m_2(\xi,\eta) \in \mathcal{M}^{2,2,1,1}$.

Now 
$$
m_1(\xi,\eta) = \frac{|\xi|^{1/2}}{|\eta|^{1/2}} \tilde m_1(\xi,\eta) \quad \mbox{and} \quad m_2(\xi,\eta) = \tilde m_2(\xi,\eta).
$$
This implies that $m_1(\xi,\eta) \in \mathcal{M}^{2,3/2,3/2,1}$, $m_1(\xi,\xi-\eta) \in \mathcal{M}^{2,3/2,1,3/2}$, and $m_2(\xi,\eta) \in \mathcal{M}^{2,2,1,1}$.
Finally, since any $m_{\pm \pm}$ is a linear combination of
$m_1(\xi,\eta)$, $m_2(\xi,\eta)$, and $m_1(\xi,\xi-\eta)$, we deduce that
$$
m_{\pm \pm} \in \mathcal{M}^{2,3/2,1,1}.
$$

\subsubsection{The phases}

Recall that for a phase $\phi$, the space, time, and space-time resonant sets are given by
$$
\mathcal{T} = \{ \phi = 0 \} \quad,\quad \mathcal{S} = \{ \partial_\eta \phi = 0 \} \quad \mbox{and} \quad \mathcal{R} = \mathcal{S} \cap \mathcal{T}.
$$
Let us study these sets for each of the phases $\phi_{\pm \pm}$.
\begin{itemize}
 \item First $\phi_{++}$: it is obvious that $\mathcal{T}_{++} = \{(0,0)\}$, which is the most favourable case. We will therefore deal with this interaction by using 
a normal form transform, producing the symbol $\frac{m_{++}}{\phi_{++}}$, which belongs to $\mathcal{M}^{1/2,3/2,1,1}$.
\item For $\phi_{+-}$, we have $\partial_\eta\phi_{+-} =  + \frac32  \frac{\eta}{|\eta|^{1/2}}  +\frac32  \frac{\xi -  \eta}{|\xi- \eta|^{1/2}}   $ and hence 
 we find $\mathcal{S}_{+-} = \mathcal{R}_{+-} = \{\xi=0\}$. Thus in this case estimates will be obtained by an integration by parts in $\eta$.
 Expanding $\partial_\eta \phi_{+-}$ around $\xi=0$ gives:
$$
\partial_\eta \phi = \frac{3}{2|\eta|^{1/2}} \left( \xi + \frac{1}{2} \left( \xi \cdot \frac{\eta}{|\eta|} \right) \frac{\eta}{|\eta|} \right) + 
O\left(\frac{|\xi|^2}{|\eta|^{1/2}}\right).
$$
When integrating by parts in $\eta$, the symbol will be changed to $\frac{m_{+-}}{|\partial_\eta \phi_{+-}|^2} \partial_\eta \phi$, which, by the above estimate, belongs 
to $\mathcal{M}^{3/2,1/2,1,1}$.
\item Finally, for $\phi_{--}$, we have  $\partial_\eta \phi =  - \frac32  \frac{\eta}{|\eta|^{1/2}}  +\frac32  \frac{\xi -  \eta}{|\xi- \eta|^{1/2}}    $ 
and hence 
 we  find that  $\mathcal{S}_{+-}   =\{ \xi = 2\eta    \}     $  and 
  $  \mathcal{R}_{--} = \{(0,0)\}  $. 
As we will see, by splitting the $(\xi,\eta)$ space into regions where $\phi$ and
$\partial_\eta \phi$ do not vanish, this case can be reduced to estimates similar to those for $\phi_{++}$ and $\phi_{+-}$.
\end{itemize}

\subsection{Invariances and commutators}

\label{invandcom}

The solutions of equation~(\ref{aau}) are invariant by translation $u \mapsto u(x+\delta)$, $\delta \in \mathbb{R}^2$, rotation $u \mapsto u(R_\theta x)$, and dilation
$u \mapsto \frac{1}{\lambda^{1/2}} u ( \lambda^{3/2} t,\lambda x)$. 

The generators of these transformations give the vector fields\footnote{Notice that the vector field $\mathcal{S}$ equals the generator of the scaling transformation up 
to an additive constant only.}
$$
\p^3 = (\partial_1,\partial_2)^3,\;\;   
\Omega = x^1 \partial_2 - x^2 \partial_1=\omega^i\p_i,  
\mbox{ and } \mathcal{S} = \frac{3}{2}t\partial_t + x^i \p_i.
$$
We shall also use the space part of $\mathcal{S}$, namely
$$
\Sigma = x^i \p_i.
$$
The vector fields $\partial^3,\Omega,\mathcal{S}$ are collectively denoted by $\Gamma$, and the multiindex notation for powers of $\Gamma$ is used.

Of course, $\p^3$ commutes exacly with the linear part of the equation, and the linear group. 
As for the nonlinear part of the equation, it can be expanded into pseudo-product operators
(by translation invariance) and regular derivatives can be commuted with pseudo-product operators according to the Leibniz rule.

The commutation of the vector fields $\Omega$ and $\mathcal{S}$ with the linear part of the equation and the linear group are given by
\begin{equation}
\begin{split}
& \left[ \Omega,\partial_t + i \Lambda^{3/2} \right] = 0 \\
& \left[ \mathcal{S},\partial_t + i \Lambda^{3/2} \right] = - \frac{3}{2} \left( \partial_t + i \Lambda^{3/2} \right) \\
& \left[ \Omega, e^{\pm i t \Lambda^{3/2}} \right] =  \left[ \mathcal{S}, e^{\pm i t \Lambda^{3/2}} \right] = 0 .
\end{split}
\end{equation}

Next, to commute these vector fields with the nonlinear part of the equation, expand it into a series of multilinear operators: 
in other words, write~(\ref{aau}) as
$$
\partial_t u = -i \Lambda^{3/2} u + \sum_{k=2}^\infty L_k (u,\dots,u),
$$
where $L_k$ is $k$-linear. The invariance by translation and rotation gives immediately that the Leibniz rule applies to $\partial$ and $\Omega$, namely
\begin{equation}
\begin{split}
& \partial L_k (u,\dots,u) = L_k (\partial u,\dots,u) + \dots + L_k (u,\dots,\partial u) \\
& \Omega L_k (u,\dots,u) = L_k (\Omega u,\dots,u) + \dots + L_k (u,\dots,\Omega u).
\end{split}
\end{equation}
As for the dilation operators, notice that the scaling invariance gives
$$ 
L_k \left(\frac{1}{\lambda^{1/2}} u ( \lambda^{3/2} t,\lambda x),\dots,\frac{1}{\lambda^{1/2}} u ( \lambda^{3/2} t,\lambda x) \right)
= \lambda \left[ L_k (u,\dots,u) \right] ( \lambda^{3/2} t,\lambda x)
$$
Taking the derivative in $\lambda$ yields the modified Leibniz rule:
$$
\mathcal{S} L_k (u,\dots,u) = L_k (\mathcal{S} u,\dots,u) + \dots + L_k ( u,\dots,\mathcal{S}u) - \left( \frac{k}{2} + 1 \right) L_k (u,\dots,u).
$$
Finally, we will need to commute vector fields with Fourier multipliers of the type $\Lambda^\alpha$. The formulas are easily computed:
\begin{equation*}
\begin{split}
& \left[\p,\Lambda^\alpha\right] = \left[\Omega,\Lambda^\alpha\right] = 0 \\
& \left[\mathcal{S},\Lambda^\alpha\right] = \left[\Sigma,\Lambda^\alpha\right] = \alpha \Lambda^\alpha.
\end{split}
\end{equation*}
In particular, we shall remember that
\begin{equation*}
\begin{split}
& \Gamma \Lambda^\alpha = \Lambda^\alpha \Gamma + \mbox{ \{ lower order terms \} } \\
& \Sigma \Lambda^\alpha = \Lambda^\alpha \Sigma + \mbox{ \{ lower order terms \} }.
\end{split}
\end{equation*}

\subsection{The estimates}

We prove the following a priori estimate (recall that
$f = e^{it\Lambda^{3/2}} u$ and that $\alpha = \alpha_* + 3 \iota$).

\begin{prop}
\label{eagle}
Assuming $\|u\|_X < \infty$ and~(\ref{conditiondata}),
\begin{align}
\label{D1} \left\|Y(\partial)^3 \Lambda^\alpha u \right\|_{W^{9,2}_{8+K}} &\lesssim \left( \eps + \|u\|_X^2 \right) \<t\>^{\delta'} \\
\label{D1'} \left\| Y(\partial)^3 \Lambda^\alpha \Sigma f \right\|_{W^{9,2}_{7+K}} &\lesssim \left( \eps + \|u\|_X^2 \right) \<t\>^{\delta'}   \\
\label{D2} \left\| Y(\partial)^3 \Lambda^{1/2+\alpha-\beta} u \right\|_{W^{7,\infty}_{4+K}} & \lesssim  \frac{\eps + \|u\|_X^2}{ \<t\>^{1-\delta'-\frac{2}{3}\beta}} 
\qquad \mbox{if $0\leq \beta \leq 1/2$} \\
\label{D3}  \left\|Y(\partial)^2 \Lambda^\alpha u \right\|_{W^{7,2}_{4+K}} & \lesssim \eps + \|u\|_X^2  \\
\label{D3'} \left\| Y(\p)^2 \Sigma \Lambda^\alpha f \right\|_{W^{7,2}_{3+K}} & \lesssim \eps + \|u\|_X^2  \\
\label{D4} \left\| Y(\p) \Lambda^{1/2+\alpha-\beta} u \right\|_{W^{5,\infty}_{K}} & \lesssim  
\frac{\epsilon + \|u\|_X^2}{\<t\>^{1-\frac{2}{3} \beta}} \qquad \mbox{if $0 \leq \beta \leq 1/2$}.
\end{align}
\end{prop}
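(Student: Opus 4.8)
The plan is to treat \eqref{D1}--\eqref{D4} as one bootstrap on the profile $f=e^{it\Lambda^{3/2}}u$, organized around Duhamel's formula \eqref{eqfourier}. The logical order is: first the weighted $L^2$ bounds \eqref{D1}, \eqref{D1'}, \eqref{D3}, \eqref{D3'}; then the dispersive bounds \eqref{D2}, \eqref{D4}, deduced by feeding \eqref{D1'}, \eqref{D3'} into the linear decay estimate of Appendix~\ref{sectionlindecay}. In every term, each factor is estimated either by the energy part of $\|u\|_X$ (when it carries many derivatives and vector fields --- at most $2K$ of them, at the price of a $\langle s\rangle^{(j+1)\delta}$ growth when $j$ of them act), or by the decay part of $\|u\|_X$ (when it is placed in $L^\infty$, with its $\langle s\rangle^{-1+\cdots}$ decay); the contribution of $\widehat u_0$ is controlled directly by \eqref{conditiondata}, which is the source of the $\eps$ on the right, while the bilinear and higher contributions produce the $\|u\|_X^2$. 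Note that since $e^{it\Lambda^{3/2}}$ is an $L^2$--isometry commuting with $Y(\p)$ and $\Lambda^\alpha$, bounding $Y(\p)^j\Lambda^\alpha\Gamma^\gamma f$ is the same as bounding $Y(\p)^j\Lambda^\alpha\Gamma^\gamma u$, and a short count shows the derivative/vector-field budgets in \eqref{D1}--\eqref{D3'} sit comfortably inside those controlled by $\|u\|_{\operatorname{energy}}$ when $K\ge 10$; for \eqref{D1'} and \eqref{D3'} one uses in addition the identity $\Sigma f=\mathcal S f-\tfrac32 t\p_t f$, where $\mathcal S f$ is energy-controlled because $[\mathcal S,e^{it\Lambda^{3/2}}]=0$, and $t\p_t f$ is $t$ times the un-integrated right-hand side of \eqref{eqfourier}, the extra power of $s$ being absorbed by the $\langle s\rangle^{-1}$ carried by the $L^\infty$ factor.

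For \eqref{D1}, \eqref{D3} (and, mutatis mutandis, \eqref{D1'}, \eqref{D3'}) one applies $Y(\p)^j\Lambda^\alpha\Gamma^\gamma$ to \eqref{eqfourier} and commutes these operators past the bilinear terms using the Leibniz, modified-Leibniz and $[\mathcal S,\Lambda^\alpha]=\alpha\Lambda^\alpha$ rules of Section~\ref{invandcom}, producing a finite sum of bilinear expressions $\int_0^t\!\!\int e^{is\phi_{\tau_1\tau_2}}\mu(\xi,\eta)\widehat{\Gamma^{\gamma_1}f}(s,\eta)\widehat{\Gamma^{\gamma_2}f}(s,\xi-\eta)\,d\eta\,ds$ with $|\gamma_1|+|\gamma_2|\le|\gamma|$, the symbols $\mu$ still lying in the classes $\mathcal M^{2,3/2,1,1}$ of Section~\ref{eotbsatp}, plus lower-order commutators and the cubic-and-higher contribution of $R$. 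Each bilinear piece is handled according to its phase: for $\phi_{++}$ one does a normal-form integration by parts in $s$ (the symbol $m_{++}/\phi_{++}\in\mathcal M^{1/2,3/2,1,1}$ is harmless), leaving boundary terms at $s=0,t$ and a cubic term in which $\p_s f$ is replaced by the nonlinearity; for $\phi_{+-}$, whose space--time resonant set is $\{\xi=0\}$, one integrates by parts in $\eta$ away from $\xi=0$, the new symbol $\frac{m_{+-}}{|\p_\eta\phi_{+-}|^2}\p_\eta\phi_{+-}\in\mathcal M^{3/2,1/2,1,1}$ being controlled by the vanishing of $m_{+-}$ at $\xi=0$ together with the expansion of $\p_\eta\phi_{+-}$ there, while near $\xi=0$ one uses the smallness of the region and the low-frequency weight $\Lambda^\alpha$; and for $\phi_{--}$ one partitions $(\xi,\eta)$-space into regions where $\phi_{--}$ or $\p_\eta\phi_{--}$ is bounded below, reducing to the two previous cases. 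In each region, placing the high-order factor in $L^2$ and the low-order one in $L^\infty$, using the bilinear multiplier bounds of Appendix~\ref{appendixbilin} (the dyadic summation being absorbed by the $Y(\p)=\Lambda^\iota+\Lambda^{-\iota}$ weights), and performing $\int_1^t\langle s\rangle^{-1+a\delta}\,ds\lesssim\langle t\rangle^{a\delta}$, one matches the exponent $\langle t\rangle^{\delta'}$ in \eqref{D1}, \eqref{D1'} and the bounded right-hand sides of \eqref{D3}, \eqref{D3'}; here $\delta'=(2K+1)\delta$ is large enough to dominate all the growth the cascade produces, and the $R$-term is easier since it is cubic and hence $s$-integrable.

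The dispersive estimates \eqref{D2}, \eqref{D4} then follow from the linear bound $\|\Lambda^{1/2}e^{-it\Lambda^{3/2}}g\|_{L^\infty}\lesssim\frac1t\sum_{|k|\le3}\|Y(\p)\Sigma\Omega^k g\|_{L^2}$ of Appendix~\ref{sectionlindecay}, applied with $g$ a suitable $Y(\p)^2\Lambda^{\alpha-\beta}$-derivative of $\Gamma^{\gamma'}f$: the $\Sigma$ and $\Omega$ factors it generates on $f$ are exactly what \eqref{D1'} (for \eqref{D2}, which tolerates the $\langle t\rangle^{\delta'}$ growth) and \eqref{D3'} (for \eqref{D4}, which does not) control, and the extra gain $\langle t\rangle^{\frac23\beta}$ for $0\le\beta\le1/2$ comes from the scaling of $\Lambda^{-\beta}$ against $e^{-it\Lambda^{3/2}}$.

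I expect the main obstacle to be the $\phi_{+-}$ interaction near $\xi=0$: there the integration by parts in $\eta$ introduces the singular factor $|\p_\eta\phi_{+-}|^{-2}\sim|\eta|/|\xi|^2$, and closing the estimate requires using simultaneously the null structure of \eqref{ww} --- the vanishing of $m_{+-}$ on the resonant set $\{\xi=0\}$, which is the key analytic property of the system --- and the low-frequency weight $\Lambda^\alpha$ with $\alpha>0$, the latter being unavoidable since $\p h$ decays slower than $1/t$ when $\alpha_*=0$. A secondary difficulty is keeping the cascading growth rates $\langle s\rangle^{(j+1)\delta}$ coming from the energy norm consistent with the targets, which is precisely what forces the choices $\delta=\sup(C_0R\eps,\alpha)$ and $\delta'=(2K+1)\delta$.
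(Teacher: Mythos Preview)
Your overall architecture is right: (D2) and (D4) follow from (D1') and (D3') via Proposition~\ref{decay-prop}, and (D1'), (D3') reduce to (D1), (D3) through the identity $\Sigma f=\mathcal S f-\tfrac32 t\,\partial_t f$ together with a direct bound on $t\,\partial_t f$. Your description of the space--time resonance mechanism for the three phases $\phi_{++}$, $\phi_{+-}$, $\phi_{--}$, including the null structure at $\xi=0$ in the $+-$ case and the partition for $--$, matches exactly what the paper does for (D3).

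The paper, however, does \emph{not} treat (D1) and (D3) in parallel. For (D1) (Section~\ref{lowfreq}) it applies $Y(\partial)^3\Lambda^\alpha\Gamma^k$ directly to the differential equation~(\ref{simpleform}), bounds the bilinear term pointwise in time by $\frac{\|u\|_X}{\langle t\rangle}\|\cdot\|_2+\langle t\rangle^{\delta'-1}\|u\|_X^2$ via Corollary~\ref{Bili-prop1}, and closes by Gronwall---no integration by parts, no phase analysis. The space--time resonance argument is reserved for (D3), where a uniform-in-$t$ bound is required and the integrand after integration by parts must actually be integrable (one obtains rates like $\langle s\rangle^{-3/2+\delta'}$ in the $+-$ case, not merely $\langle s\rangle^{-1+a\delta}$). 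Your sentence ``performing $\int_1^t\langle s\rangle^{-1+a\delta}\,ds\lesssim\langle t\rangle^{a\delta}$, one matches \dots\ the bounded right-hand sides of (D3), (D3')'' does not match: that integral is not bounded, and for (D3) the whole point of the resonance argument is to beat $\langle s\rangle^{-1}$.

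There is also a concrete reason the paper avoids integration by parts in $\eta$ at the top level (D1): after IBP one has to control $\Lambda x\,g$ with $g=\Gamma^k\partial^j u$, $|k|\leq 8+K$, which via $|\xi||\partial_\xi\widehat g|\lesssim|\widehat{\Omega g}|+|\widehat{\Sigma g}|+\cdots$ costs one more vector field; combined with the $9$ regular derivatives in (D1) and the losses from the bilinear symbol, the energy norm at level $9+K$ may not carry enough derivatives for $K$ near $10$. The Gronwall route sidesteps this entirely. Your uniform Duhamel/IBP treatment of (D1) is therefore not wrong in spirit, but it would force you to check a delicate derivative count that the paper's bifurcated argument avoids; at minimum you should separate the two cases and explain why (D3) yields an integrable rate while (D1) does not need one.
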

This proposition gives in particular the desired a priori estimate
$$
\|u\|_{\operatorname{decay}} \lesssim \epsilon + \|u\|_X^2.
$$

\begin{rem}
Notice that one of the reasons why many derivatives are lost in this 
argument is that we cannot interpolate easily the derivatives $\mathcal{S}$ and $\Omega$, 
namely we do not have a Trudinger type inequality for such derivatives. 
\end{rem}

\subsection{Proof of~(\ref{D1})}\label{lowfreq}

The growth estimate~(\ref{D1}) follows from the result of Section~\ref{sectionenergy} as far as high frequencies are concerned. Thus it
suffices to control $Y(\p)^3 \Lambda^\alpha \Gamma^{k} u$ in $L^2$ for $|k| \leq K+8$. We fix such a $k$ and apply 
$Y(\p)^3 \Lambda^\alpha \Gamma^k$ to~(\ref{simpleform}). This gives
\begin{equation}
\label{pinguin}
Y(\p)^3 \Lambda^\alpha \Gamma^{k} \left( \partial_t  + i \Lambda^{3/2} \right) u =  Y(\p)^3 \Lambda^{\alpha} \Gamma^{k} T_{m}(u,u) + Y(\p)^3 \Lambda^{\alpha} \Gamma^{k} R.
\end{equation}
Let us discard for the moment the remainder terms; we will come back to them in Section~\ref{remainderterm}.

Next apply Leibniz rule (see Subsection~(\ref{invandcom})) to commute $\Gamma^{k}$ with the linear equation and the bilinear term. Out of the many terms coming out, we single out 
two representative examples: $Y(\p)^3 \Lambda^\alpha T_{m}(\Gamma^{k}u,u)$ and $Y(\p)^3 \Lambda^\alpha T_{m}(\Gamma^{k/2}u,\Gamma^{k/2}u)$ 
(assuming for simplicity that $k$ is even). Thus the equation now reads
\begin{equation*}
\begin{split}
\left( \partial_t  + i \Lambda^{3/2} \right) \Gamma^{k} Y(\p)^3 \Lambda^\alpha u = & Y(\p)^3 \Lambda^\alpha T_{m}(\Gamma^{k}u,u) + Y(\p)^3 \Lambda^\alpha T_{m}(\Gamma^{k/2}u,\Gamma^{k/2}u) \\
& \qquad + \mbox{ \{ similar terms \} }.
\end{split}
\end{equation*}
To estimate the first term on the right-hand side, recall that $m \in \mathcal{M}^{2,3/2,1,1}$ and use Corollary~\ref{Bili-prop1} to get
\begin{equation*}
\begin{split}
\left\|Y(\p)^3 \Lambda^\alpha T_{m}(\Gamma^{k}u,u) \right\|_2 & \lesssim \left\| \Gamma^{k} \Lambda^\alpha u \right\|_{H^2} \left\| \Lambda^{3/4}  u \right\|_{W^{2,\infty}} \\
& \lesssim  \left\| \Gamma^{k}\Lambda^\alpha  u \right\|_2 \left\| \Lambda^{3/4} u \right\|_{W^{2,\infty}} 
+ \left\| \Gamma^{k}\Lambda^\alpha  u \right\|_{\dot{H}^2} \left\| \Lambda^{3/4} u \right\|_{W^{2,\infty}}\\
& \lesssim \frac{\|u\|_X}{\left< t \right>} \left\| \Gamma^{k} \Lambda^\alpha u \right\|_2  + \left< t \right>^{\delta'-1} \|u\|_X^2.
\end{split}
\end{equation*}
To estimate the second term on the right-hand side, we also rely on Corollary~\ref{Bili-prop1} to obtain
\begin{equation*}
\begin{split}
\left\|Y(\p)^3 \Lambda^\alpha T_{m}(\Gamma^{k/2}u,\Gamma^{k/2}u) \right\|_2 & \lesssim \left\| \Gamma^{k/2} \Lambda^\alpha u \right\|_{H^2}  
\left\| \Lambda^{3/4} \Gamma^{k/2}u \right\|_{W^{2,\infty}} \\
& \lesssim \frac{\|u\|_X}{t} \left\|\Gamma^{k} \Lambda^\alpha u \right\|_2 .
\end{split}
\end{equation*}
Performing an energy estimate, (\ref{pinguin}) gives thus the differential inequality
$$
\frac{d}{dt} \left\|Y(\partial)^3 \Lambda^\alpha u \right\|_{W^{0,2}_{K+8}} 
\lesssim \frac{\|u\|_X}{\left< t \right>} \left\|Y(\partial)^3 \Lambda^\alpha u \right\|_{W^{0,2}_{K+8}}  
+ \left< t \right>^{\delta'-1} \|u\|_X^2.
$$
The hypothesis on the initial data~(\ref{conditiondata}) and Gronwall's inequality give the desired conclusion.

\subsection{Proof of~(\ref{D1'})}

It follows from~(\ref{D1}) in a very similar, but slightly simpler, way to how~(\ref{D3'}) follows from~(\ref{D3}). We therefore refer the reader to the proof of~(\ref{D3'}).

\subsection{Proof of~(\ref{D2})}

It follows from~(\ref{D1'}) and Proposition~\ref{decay-prop}.

\subsection{Proof of (\ref{D3})}

\subsubsection{First reduction}

The estimate~(\ref{D3}) is the crucial one, and the one for which space-time resonances will play a role. We want to control 
$\|Y^2(\p) \Gamma^{k} \Lambda^\alpha \p^j u \|_2$ for $|k|\leq K+4$ and $|j|\leq 7$, which are from now on fixed. We will denote
$$
z = \Gamma^{k} \p^j u \quad \mbox{and} \quad g(t) = e^{it\Lambda^{3/2}} z(t).
$$

Start by applying $Y^2(\p) \Lambda^\alpha \Gamma^{k} \p^j$ to the equation~(\ref{aau}). 
Using Leibniz rule (see Subsection~\ref{invandcom}), $\Gamma^{k} \p^j$ can be commuted with the linear and quadratic terms, which produces many terms.
We only record the extremal ones: these are the most difficult to estimate, and we will focus on them. Thus we find
\begin{equation}
\begin{split}
\partial_t Y^2(\p) \Lambda^\alpha z = & - i \Lambda^{3/2} Y^2(\p) \Lambda^\alpha z + Y^2(\p) \Lambda^\alpha T_{m_{++}}(z,u) + Y^2(\p) \Lambda^\alpha T_{m_{--}}(\bar z,\bar u) \\
& \qquad + Y^2(\p)  \Lambda^\alpha T_{m_{+-}}(z,\bar u) + Y^2(\p) \Lambda^\alpha T_{m_{+-}}(u,\bar z) \\
& \qquad + \mbox{\{ Mixed terms \}} + Y^2(\p) \Lambda^\alpha \Gamma^{k} \p^jR.
\end{split}
\end{equation}
Proceeding as in Subsection~\ref{subsecrewrit}, this can be translated in Fourier space into
\begin{equation*}
\begin{split}
Y^2(\xi) |\xi|^\alpha \widehat{g}(t,\xi)& \\
 =  Y^2(\xi) &|\xi|^\alpha \widehat z_0(\xi)  + \sum_{\tau_{1,2} = \pm} \int_0^t Y^2(\xi) |\xi|^\alpha
\!\!\int e^{is\phi_{\tau_1,\tau_2}(\xi,\eta)} m_{\tau_1 \tau_2} (\xi,\eta) \widehat{f_{-\tau_1}}(s,\eta) \widehat{g_{-\tau_2}}(s,\xi-\eta)\,d\eta \,ds \\ 
& \quad \quad \quad + \mbox{\{ Mixed terms \}} + \int_0^t e^{is |\xi|^{3/2}}Y^2(\xi) |\xi|^\alpha \widehat{\Gamma^{k} \p^j R} (s,\xi )\,ds.
\end{split}
\end{equation*}
The constant term $Y^2(\xi) |\xi|^\alpha \widehat z_0(\xi)$, by assumption, can be bounded in $L^2$ by $\epsilon$.

In the three following subsections, we analyze separately the terms corresponding to $++$, $--$, and $+-$ in the above sum to show that
$$
\left\| \int_0^t e^{is|D|^{3/2}} Y(\partial)^2 \Lambda^\alpha \Gamma^k \partial^j \sum T_{m_{\pm,\pm}}(u,u)\,ds \right\|_2 \lesssim \|u\|_X^2.
$$

The last term, which involves the remainder $R$, will be dealt with in Section~\ref{remainderterm} where we show
$$
\left\| \int_0^t e^{is|D|^{3/2}} Y(\partial)^2 \Lambda^\alpha \Gamma^k \partial^j R\,ds \right\|_2 \lesssim \|u\|_X^3.
$$

\subsubsection{The $++$ case}

Let us consider for now the term
$$
Y^2(\xi) |\xi|^\alpha \int_0^t \int e^{is\phi_{++}(\xi,\eta)} m(\xi,\eta) \widehat{f}(s,\eta) \widehat{g}(s,\xi-\eta) \,d\eta\,ds
$$
where $m \in \mathcal{M}^{2,3/2,1,1}$ (recall that we denoted indistinctively $f$ for $f$ or $\bar f$ since the distinction is irrelevant for our purposes).
Using the identity $e^{is\phi} = \frac{1}{i\phi} \partial_s e^{is\phi}$ to integrate by parts gives
\begin{subequations}
\begin{align}
Y^2(\xi) |\xi|^\alpha \int_0^t & \int e^{is\phi_{++}(\xi,\eta)} m(\xi,\eta) \widehat{f}(s,\eta) \widehat{g}(s,\xi-\eta) \,d\eta\,ds \\
& \label{flamingo1} \qquad  = Y^2(\xi) |\xi|^\alpha \int e^{it\phi_{++}(\xi,\eta)} \frac{m(\xi,\eta)}{i \phi_{++}(\xi,\eta)} \widehat{f}(t,\eta) \widehat{g}(t,\xi-\eta) \,d\eta\,ds \\
& \label{flamingo2} \qquad \qquad  - Y^2(\xi) |\xi|^\alpha \int \frac{m(\xi,\eta)}{i \phi_{++}(\xi,\eta)} \widehat{f}(0,\eta) \widehat{g}(0,\xi-\eta) \,d\eta\,ds \\
& \label{flamingo3} \qquad \qquad  - Y^2(\xi) |\xi|^\alpha \int_0^t \int e^{is\phi_{++}(\xi,\eta)} \frac{m(\xi,\eta)}{i \phi_{++}(\xi,\eta)} \partial_s \widehat{f}(s,\eta) 
\widehat{g}(s,\xi-\eta) \,d\eta\,ds \\
& \label{flamingo4} \qquad \qquad  - Y^2(\xi) |\xi|^\alpha \int_0^t \int e^{is\phi_{++}(\xi,\eta)} \frac{m(\xi,\eta)}{i \phi_{++}(\xi,\eta)} \widehat{f}(s,\eta) 
\partial_s \widehat{g}(s,\xi-\eta) \,d\eta\,ds.
\end{align}
\end{subequations}
Keeping in mind that $ \frac{m(\xi,\eta)}{i \phi_{++}(\xi,\eta)}$ belongs to $\mathcal{M}^{1/2,3/2,1,1}$, applying Corollary~\ref{Bili-prop1} gives
$$
\left\|  (\ref{flamingo1}) \right\|_2 \lesssim \left\| \Lambda^{1/4} u \right\|_{W^{1,\infty}} 
\left\| \Lambda^{1/4} z \right\|_{H^1} \lesssim \frac{\|u\|_X^2 }{\left< t \right>^{1-\delta'-\frac{2}{3}(\alpha + \frac{1}{4})}}.
$$
The term~(\ref{flamingo2}) is easier to treat, thus we skip it.
Proceeding as in the proof of~(\ref{D3'}) below, we can prove that
$$
\left\| Y(\p)^3 \Lambda^{1/4} \partial_t f \right\|_2 \lesssim \frac{\|u\|_X^2}{t} \quad \mbox{and} \quad
\left\|Y(\p)^3 \Lambda^{1/4} \partial_t g \right\|_2 \lesssim \frac{\|u\|_X^2}{\<t\>^{1-\delta}}
$$
This gives, with the help of Corollary~\ref{Bili-prop1},
\begin{equation*}
\begin{split}
\left\|  (\ref{flamingo3}) \right\|_2 & =
\left\| \int_0^t \int e^{is\phi_{++}(\xi,\eta)} Y^2(\xi) |\xi|^\alpha \frac{m(\xi,\eta)}{i \phi_{++}(\xi,\eta)} \partial_s \widehat{f}(s,\eta) \widehat{g}(s,\xi-\eta) \,d\eta\,ds \right\|_2 \\
& \lesssim \int_0^t \left\| Y^2(\xi) |\xi|^\alpha e^{is\Lambda^{3/2}} T_{\frac{m(\xi,\eta)}{i \phi_{++}(\xi,\eta)}} \left(  e^{is\Lambda^{3/2}} \partial_s \widehat{f}(s,\eta)\,,
\, e^{is\Lambda^{3/2}} \widehat{g}(s,\xi-\eta) \right) \right\|_2 \,ds \\
& = \int_0^t \left\|Y^2(\xi) |\xi|^\alpha T_{\frac{m(\xi,\eta)}{i \phi_{++}(\xi,\eta)}} \left(  e^{is\Lambda^{3/2}} \partial_s \widehat{f}(s,\eta)\,,
\, e^{is\Lambda^{3/2}} \widehat{g}(s,\xi-\eta) \right) \right\|_2 \,ds \\
& \lesssim \int_0^t \left\| \Lambda^{1/4} \partial_t f \right\|_{H^1} \left\| \Lambda^{1/4} z \right\|_{W^{1,\infty}} \,ds \\
& \lesssim \int_0^t  \frac{\|u\|_X}{\<s\>} \frac{\|u\|_X}{\<s\>^{1-\delta'}}\,ds \\
& \lesssim \|u\|_X^3.
\end{split}
\end{equation*}
Finally, the term~(\ref{flamingo4}) can be treated in a very similar way, thus we omit the details here.

\subsubsection{The $+-$ case}

Let us consider here the term
$$
Y^2(\xi) |\xi|^\alpha \int_0^t \int e^{is\phi_{+-}(\xi,\eta)} m(\xi,\eta) \widehat{f}(s,\eta) \widehat{g}(s,\xi-\eta) \,d\eta\,ds
$$
where $m \in \mathcal{M}^{2,3/2,1,1}$.
The piece of the integral corresponding to $s \in (0,1)$ is easily dealt with; thus we shall consider in the following
$$
Y^2(\xi) |\xi|^\alpha \int_1^t \int e^{is\phi_{+-}(\xi,\eta)} m(\xi,\eta) \widehat{f}(s,\eta) \widehat{g}(s,\xi-\eta) \,d\eta\,ds
$$
(the aim of this manipulation is to allow for the upcoming integration by parts, which will produce a $\frac{1}{s}$ factor, not integrable near $s=0$).
Using the identity $e^{is\phi} = \frac{1}{is|\partial_\eta \phi|^2} \partial_\eta \phi \cdot \partial_\eta e^{is\phi}$ to integrate by parts gives
\begin{subequations}
\begin{align}
& Y^2(\xi) |\xi|^\alpha \int_1^t \int e^{is\phi_{+-}(\xi,\eta)} m(\xi,\eta) \widehat{f}(s,\eta) \widehat{g}(s,\xi-\eta) \,d\eta\,ds \\
& \label{woodpecker1} \qquad  = - Y^2(\xi) |\xi|^\alpha \int_1^t \int e^{is\phi_{+-}(\xi,\eta)} \frac{m(\xi,\eta)\partial_\eta \phi_{+-}(\xi,\eta)}{i s |\partial_\eta \phi_{+-}(\xi,\eta)|^2}
\cdot \widehat{f}(s,\eta) \partial_\eta \widehat{g}(s,\xi-\eta) \,d\eta\,ds \\
& \label{woodpecker2} \qquad \qquad  - Y^2(\xi) |\xi|^\alpha \int_1^t \int e^{is\phi_{+-}(\xi,\eta)} \frac{m(\xi,\eta)\partial_\eta \phi_{+-}(\xi,\eta)}{i s |\partial_\eta \phi_{+-}(\xi,\eta)|^2}
\cdot \partial_\eta \widehat{f}(s,\eta) \widehat{g}(s,\xi-\eta) \,d\eta\,ds \\
& \label{woodpecker3}\qquad \qquad  - Y^2(\xi) |\xi|^\alpha \int_1^t \int e^{is\phi_{+-}(\xi,\eta)} \partial_\eta \cdot \left[ \frac{m(\xi,\eta)\partial_\eta \phi_{+-}(\xi,\eta)}
{i s |\partial_\eta \phi_{+-}(\xi,\eta)|^2} \right] 
\widehat{f}(s,\eta) \widehat{g}(s,\xi-\eta) \,d\eta\,ds.
\end{align}
\end{subequations}
Notice that, due to the vanishing properties of $m$ and $\partial_\eta \phi$ established in Subsection~\ref{eotbsatp} 
the symbol $\frac{m(\xi,\eta)\partial_\eta \phi_{+-}}{i|\partial_\eta \phi_{+-}(\xi,\eta)|^2 |\xi-\eta|}$ belongs to the class $\mathcal{M}^{1/2,1/2,1,0}$. 
Therefore, by Corollary~\ref{Bili-prop1},
\begin{equation}
\label{tree1}
\begin{split}
 & \left\|  (\ref{woodpecker1}) \right\|_2= \left\| Y^2(\xi) |\xi|^\alpha \int_1^t \int e^{is\phi_{+-}(\xi,\eta)} 
\frac{m(\xi,\eta)}{i s |\partial_\eta \phi_{+-}(\xi,\eta)|^2} \widehat{f}(s,\eta) \partial_\eta \widehat{g}(s,\xi-\eta) \,d\eta\,ds \right\|_2 \\
& \qquad \lesssim \int_1^t \frac{1}{s} \left\| e^{is\Lambda^{3/2}}
T_{Y^2(\xi) |\xi|^\alpha \frac{m(\xi,\eta)\partial_\eta \phi_{+-}}{i |\partial_\eta \phi_{+-}(\xi,\eta)|^2 |\xi-\eta|} }
\left( e^{is\Lambda^{3/2}} f(s)\,,\, e^{is\Lambda^{3/2}}\Lambda x g(s) \right) \right\|_2 \,ds \\
& \qquad \lesssim \int_1^t \frac{1}{s} \left\| \Lambda^{1/2} u \right\|_{W^{1,4}} \left\| Y(\partial) e^{is\Lambda^{3/2}}\Lambda x g \right\|_{W^{1,4}} \,ds
\end{split}
\end{equation}
On the one hand, we can interpolate between $L^2$ and $L^\infty$ to obtain
\begin{equation}
\label{tree2}
\left\| \Lambda^{1/2} u \right\|_{W^{1,4}} \lesssim \sqrt{ \left\| \Lambda^\alpha u \right\|_{W^{1,2}} \left\| \Lambda^{1-\alpha} u \right\|_{W^{1,\infty}} } \lesssim \frac{\|u\|_X}{\sqrt{\< t \>}}.
\end{equation}
On the other hand, the pointwise bound
\begin{equation*}
\begin{split}
|\xi| |\partial_\xi \widehat{g}(\xi)| \lesssim \left|\widehat{g}(\xi)\right| + \left|\widehat{\Omega g} (\xi)\right| + \left| \widehat{\Sigma g} (\xi)\right| \lesssim \left|\widehat{g}(\xi)\right| + \left|\widehat{\Omega g} (\xi)\right| + \left| \widehat{\mathcal{S} g} (\xi)\right| + t \left| \partial_t \widehat{g}(\xi)  \right|
\end{split}
\end{equation*}
entails the $L^2$ bound
$$
\left\|Y(\partial) \Lambda^{1/2} \Lambda x g \right\|_{H^1} 
\lesssim \|Y(\partial) \Lambda^{1/2} g \|_2 + \|Y(\partial) \Lambda^{1/2} g\|_{W^{0,2}_1} + t \|Y(\partial) \Lambda^{1/2} \partial_t g\|_2 
\lesssim (\|u\|_X + \|u\|_X^2) \<t\>^{\delta'}
$$
(the term involving the time derivative being bounded as in the proof of~(\ref{D3'}) below) and, by Sobolev embedding,
\begin{equation}
\label{tree3}
\left\|Y(\partial) e^{is\Lambda^{3/2}}\Lambda x g \right\|_{W^{1,4}} \lesssim \left\| Y(\partial) e^{is\Lambda^{3/2}} \Lambda^{1/2} \Lambda x g \right\|_{H^1}
 = \left\|  Y(\partial) \Lambda^{1/2} \Lambda x g \right\|_{H^1} \lesssim (\|u\|_X + \|u\|_X^2) \<t\>^{\delta'}.
\end{equation}
Plugging~(\ref{tree2}) and~(\ref{tree3}) in the last line of~(\ref{tree1}) gives
\begin{equation*}
\begin{split}
\left\|  (\ref{woodpecker1}) \right\|_2 \lesssim \int_1^t \frac{1}{s} \frac{\|u\|_X}{\sqrt{s}} \|u\|_X \<s\>^{\delta'} \,ds \lesssim \|u\|_X^2.
\end{split}
\end{equation*}
The term~(\ref{woodpecker2}) is bounded in a similar fashion. Finally, in order to deal with~(\ref{woodpecker3}), observe that 
$\partial_\eta \cdot \left[ \frac{m(\xi,\eta)\partial_\eta \phi_{+-}}{i s |\partial_\eta \phi_{+-}(\xi,\eta)|^2} \right]$ belongs to 
$\mathcal{M}^{1/2,1/2,0,0}$, thus we can use Corollary~\ref{Bili-prop1} to get
\begin{equation}
\begin{split}
& \left\| Y^2(\xi) |\xi|^\alpha (\ref{woodpecker3}) \right\|_2 = \left\| Y^2(\xi) |\xi|^\alpha \int_1^t \int e^{is\phi_{+-}(\xi,\eta)} \partial_\eta \cdot 
\left[ \frac{m(\xi,\eta)\partial_\eta \phi_{+-}}{i s |\partial_\eta \phi_{+-}(\xi,\eta)|^2} \right] \widehat{f}(s,\eta) \widehat{g}(s,\xi-\eta) \,d\eta\,ds \right\|_2 \\
& \qquad \lesssim \int_1^t \frac{1}{s} \left\| e^{is\Lambda^{3/2}}
T_{Y^2(\xi) |\xi|^\alpha \partial_\eta \cdot \left[ \frac{m(\xi,\eta)\partial_\eta \phi_{+-}}{i s |\partial_\eta \phi_{+-}(\xi,\eta)|^2}\right]}
\left( e^{is\Lambda^{3/2}} f(s)\,,\, e^{is\Lambda^{3/2}} g(s) \right) \,ds \right\|_2 \, ds \\
& \qquad \lesssim \int_1^t \frac{1}{s} \left[ \left\| Y(\partial)z \right\|_{W^{1,4}} \left\| \Lambda^{1/2} u \right\|_{W^{1,4}} 
+ \left\| Y(\partial) u \right\|_{W^{1,4}} \left\| \Lambda^{1/2} z \right\|_{W^{1,4}} \right] \,ds. \\
\end{split}
\end{equation}
By Sobolev embedding, and using the bound for $\| \Lambda^{1/2} u \|_{W^{1,4}}$ in~(\ref{tree2}), as well as a similar bound for 
$\| \Lambda^{1/2} z \|_{W^{1,4}}$, the above can be estimated by
\begin{equation*}
\begin{split}
\left\| Y^2(\xi) |\xi|^\alpha (\ref{woodpecker3}) \right\|_2 & \lesssim \int_1^t \frac{1}{s} \left[ \left\| Y(\partial) \Lambda^{1/2} g \right\|_{H^1} \left\| \Lambda^{1/2} u \right\|_{W^{1,4}} 
+ \left\| Y(\partial) \Lambda^{1/2} f \right\|_{H^1} \left\| \Lambda^{1/2} z \right\|_{W^{1,4}} \right] \,ds \\
& \lesssim \int_1^t \frac{1}{s} \frac{\|u\|_X}{\sqrt{s}} \|u\|_X \<s\>^{\delta'} \,ds \lesssim \|u\|_X^2.
\end{split}
\end{equation*}

\subsubsection{The $--$ case}

Upon partitioning the $(\xi,\eta)$ space into regions where $\phi$, respectively $\partial_\eta \phi$ do not vanish, one can proceed as in the $++$ case 
(if $\phi \neq 0$) or $--$ case (if $\partial_\eta \phi \neq 0$). We refer to 
\cite{GMS2} for a similar partitioning. Indeed, recall that  for 
$\xi = 2 \eta$, $\phi (2\eta, \eta) = ( 2^{3/2} - 2 ) |\eta|^{3/2} $  and hence 
$\mathscr{R} = \{ (0,0) \} $. We can partition the plan $\xi, \eta$ into  two regions 
$\RR^4 = \Omega_1 \cup \Omega_2$  such that 
  $ |\xi|^{3/2}+|\eta|^{3/2}  \lesssim      |\phi_{--}|  $ in the region $\Omega_1$
and $ |\xi|^{1/2}+|\eta|^{1/2}  \lesssim      |\partial_\eta \phi_{--}|  $ in the region $\Omega_2$. 

More precisely, we can take a smooth cut-off  $\rho : \RR_+ \to \RR_+$ such that 
$\rho(a) = 1$ for  $a \leq 1 $ and 
$\rho(a) = 0$ for  $a \geq 2 $ and then 
define
\begin{align}
&\chi^T ( \xi,\eta) = 
\rho(\frac{200 |\xi -2\eta|  }{|\xi|} )   \\ 
&\chi^S ( \xi,\eta) =  
 \Big( 1- \rho(\frac{200 |\xi -2\eta|  }{|\xi|} )   \Big). 
\end{align} 
   
Notice that on the support of $ \chi^T  $, we have $|\xi-2\eta| \leq \frac{|\xi|}{100}  $ and 
hence, $ \frac{99}{100} |\xi| \leq 2|\eta| \leq \frac{101}{100} |\xi|   $
and $|\xi-\eta| \leq |\eta| + \frac{|\xi|}{100} \leq  \frac{101}{99} |\eta|  $.  This yields 
$$  \phi_{--}  \geq [  \left(\frac{200}{101} \right)^{3/2} - 1 -  \left(\frac{101}{99} \right)^{3/2} ]  |\eta|^{3/2} 
  \geq c    \phi_{--} . $$
In a similar way, we have on the support of $\chi^S  $  that 
$|\xi|^{1/2}+|\eta|^{1/2}  \lesssim  |\partial_\eta \phi_{--} | $. 
   
The rest of the argument is similar to what was done before. 
We rewrite the Duhamel term in  \eqref{eqfourier} as the sum of 
two terms: 
 $$
Y^2(\xi) |\xi|^\alpha \int_0^t \int \chi(\xi,\eta)  e^{is\phi_{--}(\xi,\eta)} m(\xi,\eta) \widehat{f}(s,\eta) \widehat{g}(s,\xi-\eta) \,d\eta\,ds
$$
where $\chi = \chi^T $ or  $\chi = \chi^S $.  
For the term with  $ \chi^T  $, we integrate by parts in time and recover terms 
similar to \eqref{flamingo1}- \eqref{flamingo4}. We estimate them exactly as 
in the case of the phase $\phi_{++}$.    For the term with  $\chi^S  $, we integrate by 
parts in $\eta$, we get terms similar to  \eqref{woodpecker1}-\eqref{woodpecker3}.

\subsection{Proof of~(\ref{D3'})}

\label{phoebe}

We want to bound
$$
\left\| \partial^j \Gamma^k Y^2(\partial) \Sigma \Lambda^\alpha f \right\|_2,
$$
for some $j,k$ less than respectively $7$, $K+3$, which we fix from now on. First, since the commutators are controlled by~(\ref{D3}), it suffices to bound
$$
\left\| Y^2(\p) \Lambda^\alpha \partial^j \Gamma^k \Sigma f \right\|_2.
$$
Since $Y^2(\p)$, $\partial$, $\Gamma$ commute with the group $e^{it\Lambda^{3/2}}$, we infer from~(\ref{D3}) that
$$
\left\| Y^2(\p) \Lambda^\alpha \partial^j \Gamma^k \mathcal{S} f \right\|_2 \lesssim \epsilon + \|u\|_X^2.
$$
Since $\Sigma = \mathcal{S} - \frac{3}{2} t \partial_t$, we can bound
\begin{equation}
\begin{split}
\left\| Y^2(\partial) \Lambda^\alpha \p^j \Gamma^k \Sigma f \right\|_2 & \lesssim \left\|  Y^2(\p) \Lambda^\alpha \p^j \Gamma^k \mathcal{S} f \right\|_2
+ t \left\| Y^2(\p) \Lambda^\alpha \p^j \Gamma^k \partial_t f \right\|_2 \\
& \lesssim \epsilon + \|u\|_X^2 + t \left\| Y^2(\p) \p^j \Gamma^k \partial_t f \right\|_2.
\end{split}
\end{equation}
Thus it suffices to prove that
$$
\left\| Y^2(\p) \Lambda^\alpha \p^j \Gamma^k \partial_t f \right\|_2 \lesssim \frac{\|u\|_X^2}{t}.
$$
Going back to~(\ref{simpleform}), $f$ solves
$$
\partial_t f = e^{it\Lambda^{3/2}} T_m(u,u) + e^{it\Lambda^{3/2}} R.
$$
Applying $Y^2(\p) \Lambda^\alpha \p^j \Gamma^k$ gives
\begin{equation}
\label{eqdt}
Y^2(\p) \Lambda^\alpha \p^j \Gamma^k \partial_t f = e^{it\Lambda^{3/2}} Y^2(\p) \Lambda^\alpha \p^j \Gamma^k T_m(u,u) + e^{it\Lambda^{3/2}} Y^2(\p) \Lambda^\alpha \p^j \Gamma^k R.
\end{equation}
The remainder term $R$ is easily dealt with, thus we discard it. 

Next, we can compute $Y^2(\p) \Lambda^\alpha \p^j \Gamma^k T_m(u,u)$ by applying Leibniz' rule. This manipulation produces a great number of terms, 
but for simplicity we keep only two representative ones, namely (assuming for simplicity $j,k$ even)
$$
Y^2(\p) \Lambda^\alpha T_m(\p^j \Gamma^k u,u) \qquad  \mbox{and} \qquad Y^2(\p) \Lambda^\alpha T_m(\p^{j/2} \Gamma^{k/2} u,\p^{j/2} \Gamma^{k/2} u).
$$
Applying Corollary~\ref{Bili-prop1} gives
$$
\left\| Y^2(\p) \Lambda^\alpha T_m(\p^j \Gamma^k u,u) \right\|_2 \lesssim \left\|\Lambda^\alpha \p^j \Gamma^k u \right\|_{H^2} \left\| \Lambda^{3/4} u \right\|_{W^{2,\infty}}
\lesssim \frac{\|u\|_X^2}{t} 
$$
where the last inequality follows from~(\ref{D3}). Similarly, we find
\begin{equation*}
\begin{split}
\left\| Y^2(\p) \Lambda^\alpha T_m( \p^{j/2} \Gamma^{k/2} u,\p^{j/2} \Gamma^{k/2} u) \right\|_2 
& \lesssim \left\|\Lambda^\alpha  \partial^{j/2} \Gamma^{k/2} u \right\|_{H^2} \left\| \Lambda^{3/4} \p^{j/2} \Gamma^{k/2} u) \right\|_{W^{2,\infty}} \\
& \lesssim \frac{\|u\|_X^2}{t}.
\end{split}
\end{equation*}
Coming back to~(\ref{eqdt}), this yields
$$
\left\| Y^2(\p) \Lambda^\alpha \p^j \Gamma^k \partial_t f \right\|_2 \lesssim \frac{\|u\|_X^2}{t},
$$
thus giving the desired estimate.

\subsection{Proof of (\ref{D4})}

It follows from (\ref{D3'}), and Proposition~\ref{decay-prop}.

\subsection{Estimates for the remainder term $R$}

\label{remainderterm}

Recall that we wrote in~(\ref{simpleform}) the equation under the form
$$
\partial_t u =  - i \Lambda^{3/2} u + T_m(u,u) + R
$$
with
$$
R = \left[ \Lambda^{1/2} G(h) \psi + i \left( \Delta_h h + \frac{1}{2(1 + |\p h|^2)} \left( G(h) \psi + \p h \cdot \nabla \psi \right)^2 \right) \right]_3
$$
where $[ \cdot ]_3$ means the terms of order 3 and higher in the expansion of the expression between brackets. In proving the estimates~(\ref{D1}) to~(\ref{D4}), 
we always skipped the term $R$ to focus on the more difficult bilinear term $T_m(u,u)$. We now come back to the term $R$ and show how it can be estimated.
We will prove the a priori estimate
$$
\int_0^t \left\| Y(\partial)^3 \Lambda^\alpha \p^j \Gamma^k R \right\|_2 \,ds \lesssim \|u\|_X^3.
$$
for $|j|\leq 9$ and $|k| \leq K+8$. We fix such a $k$ from now on.
Instead of treating all the terms which appear in the above expression of $R$, we retain one, $\left[ \Lambda^{1/2} G(h) \psi  \right]_3$; this will alleviate the notations and the
other terms can be treated similarly. Thus we want to show that
$$ 
\int_0^t \left\| Y(\partial)^3 \Lambda^\alpha  \p^j \Gamma^k \Lambda^{1/2} \left[ G(h) \psi \right]_3 \right\|_2 \,ds \lesssim \|u\|_X^3.
$$
Commuting $\Lambda^{1/2}$ and $\Gamma^k$, interpolating, and applying Cauchy-Schwarz, we see that the above can be bounded up to lower order terms by terms of the form
$$
\left( \int_0^t \left\|\Gamma^k  \p^j \left[ G(h) \psi \right]_3  \right\|_2 \,ds \right)^{\theta}
\left( \int_0^t \left\| \nabla \Gamma^k  \p^j\left[ G(h) \psi \right]_3 \right\|_2^{1/2} \, ds \right)^{1-\theta} \;\;\;\; \mbox{with $0 \leq \theta \leq 1$}.
$$
Since the real problem is low frequencies, we focus on the first factor above. Expanding first $G(h)$ into multilinear operators as follows from 
Proposition~\ref{propexpansion}, and then applying Leibniz rule as described in Lemma~\ref{lemmasymmetries} gives that
$$
\Gamma^k \p^j \left[ G(h) \psi \right]_3 = \Gamma^k  \p^j \sum_{n \geq 2} M_n(h,\dots,h,\psi)
$$
is a sum of terms of the type
$$
\sum_{n \geq 2} \sum_{i_1 + \dots + i_{n+1} = k} M_n \left( \p^{j_1} \Gamma^{i_1} h,\dots, \p^{j_n} \Gamma^{i_n} h,
 \p^{j_{n+1}} \Gamma^{i_{n+1}}\psi \right).
$$
with $|i_1| + \dots + |i_{n+1}| \leq K+8$ and $|j_1| + \dots + |j_{n=1}| \leq 9$.
Using the bound given in Proposition~\ref{propexpansion}, each of these terms can be bounded by
\begin{equation*}
\begin{split}
& \int_0^t \left\| M_n \left(\p^{j_1} \Gamma^{i_1} h,\dots,\p^{j_n} \Gamma^{i_n} h,\p^{j_{n+1}} \Gamma^{i_{n+1}}\psi \right) \right\|_2 \, ds \\ 
& \qquad \qquad\lesssim C_*^n \int_0^t \left\|\p^{j_1+1}  \Gamma^{i_1} h \right\|_\infty \dots \left\|\p^{j_n+1} \Gamma^{i_n} h \right\|_\infty \left\|\p^{j_{n+1}+1}
\Gamma^{i_{n+1}} \psi \right\|_2 \,ds \\
& \qquad \qquad\lesssim C_*^n \int_0^t \left\| Y(\partial) \Lambda^{3/4}\p^{j_1} \Gamma^{i_1} u \right\|_\infty \dots  
\left\| Y(\partial) \Lambda^{3/4}\p^{j_n} \Gamma^{i_n} u \right\|_\infty \left\|\nabla \p^{j_{n+1}} \Gamma^{i_{n+1}} u \right\|_2 \,ds \\ 
& \qquad \qquad\lesssim C_*^n \|u\|_X^{n+1} \int_0^t \frac{1}{\< s \>^{1-\delta'}} 
\dots \frac{1}{\< s \>^{1-\delta'}} \<s\>^{\delta'} \,ds \\
\end{split}
\end{equation*}
Summing over $n$ gives the desired result since $\|u\|_X$ is small enough.

\section{Dispersive estimate for the linear group}

\label{sectionlindecay}

\begin{prop} \label{decay-prop}
For any $\beta \in [0,\frac{1}{2})$,
\begin{equation}
\label{dispersivestimate2}
\left\| e^{it\Lambda^{3/2}} f \right\|_\infty \lesssim t^{-1+\frac{2\beta}{3}} \sum_{j=0}^1
\sum_{k=0}^3 \left\| Y(\p) \Lambda^{\beta-1/2} \Sigma^j \Omega^k f \right\|_{L^2(\mathbb{R}^2)}.
\end{equation}
In particular,
\begin{equation}
\label{dispersivestimate1}
\left\| e^{it\Lambda^{3/2}} f \right\|_\infty \lesssim \frac{1}{t} 
\sum_{j=0}^1 \sum_{k=0}^3 \left\| Y(\p) \Lambda^{-1/2} \Sigma^j \Omega^k f \right\|_{L^2(\mathbb{R}^2)}
\end{equation}
\end{prop}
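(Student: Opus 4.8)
The plan is a stationary-phase analysis of the kernel of $e^{it\Lambda^{3/2}}$, organized around a Littlewood--Paley decomposition in frequency. Write $e^{it\Lambda^{3/2}}f(x)=c\int e^{i\Phi(x,\xi,t)}\widehat f(\xi)\,d\xi$ with $\Phi(\xi)=x\cdot\xi+t|\xi|^{3/2}$, and decompose $f=\sum_{N\in 2^{\mathbb Z}}f_N$ with $\widehat{f_N}$ supported in $\{|\xi|\sim N\}$. One has $\nabla_\xi\Phi=x+\tfrac32 t|\xi|^{-1/2}\xi$ and $\mathrm{Hess}\,\Phi=\tfrac32 t|\xi|^{-1/2}(\mathrm{Id}-\tfrac12\widehat{\xi}\otimes\widehat{\xi})$, so on $\{|\xi|\sim N\}$ the phase is nondegenerate with $|\det\mathrm{Hess}\,\Phi|\sim t^2N^{-1}$ and a critical point can occur only when $|x|\sim tN^{1/2}$. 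The key per-block estimate I would prove is, for $t\ge 1$,
\[
\|e^{it\Lambda^{3/2}}f_N\|_\infty\lesssim \min\!\big(N,\ (tN^{1/2})^{-1}\big)\sum_{j\le 1,\,k\le 3}\|\Sigma^j\Omega^k f_N\|_2 .
\]

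For the factor $(tN^{1/2})^{-1}$ I distinguish two cases. When $|x|\not\sim tN^{1/2}$, on the support of $\widehat{f_N}$ one has $|\nabla_\xi\Phi|\gtrsim\max(|x|,tN^{1/2})$; integrating by parts once with $|\nabla\Phi|^{-2}\nabla\Phi\cdot\nabla_\xi$, using that on the Fourier side $\Omega$ and $\Sigma$ act as $\xi^\perp\cdot\nabla_\xi$ and $\xi\cdot\nabla_\xi$ up to bounded multiples of the identity (so that $\|\nabla_\xi\widehat{f_N}\|_2\lesssim N^{-1}\sum_{j+k\le1}\|\Sigma^j\Omega^k f_N\|_2$), and then Cauchy--Schwarz against the $O(N^2)$-measure support, yields the bound. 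When $|x|\sim tN^{1/2}$, I further split $\widehat{f_N}$ into angular sectors: away from the critical direction $\widehat{\xi}=-x/|x|$ I integrate by parts repeatedly in the angular variable — iterating produces inverse powers of the angular distance, which is why up to three powers of $\Omega$ are needed — while near $\widehat{\xi}=-x/|x|$ I apply the van der Corput / stationary-phase lemma, the radial part needing only the single bound on $\partial_r^2\Phi$ and one scaling field $\Sigma\sim N\partial_r$; altogether this is $\lesssim|\det\mathrm{Hess}\,\Phi|^{-1/2}(\|\widehat{f_N}\|_\infty+\cdots)\sim t^{-1}N^{1/2}\|\widehat{f_N}\|_{L^\infty(|\xi|\sim N)}$, and the mixed-derivative Sobolev inequality in polar coordinates bounds $\|\widehat{f_N}\|_{L^\infty}\lesssim N^{-1}\sum_{j,k\le1}\|\Sigma^j\Omega^k f_N\|_2$. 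The trivial bound $\|e^{it\Lambda^{3/2}}f_N\|_\infty\le\|\widehat{f_N}\|_1\lesssim N\|f_N\|_2$ covers $tN^{3/2}\lesssim 1$.

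Finally I sum over $N$. On $\{|\xi|\sim N\}$, $\|f_N\|_2\sim N^{1/2-\beta}\|\Lambda^{\beta-1/2}f_N\|_2$ and $(tN^{1/2})^{-1}\|f_N\|_2\sim t^{-1}N^{-\beta}\|\Lambda^{\beta-1/2}f_N\|_2$, so with $a_N:=\sum_{j\le1,k\le3}\|\Lambda^{\beta-1/2}\Sigma^j\Omega^k f_N\|_2$ one gets $\|e^{it\Lambda^{3/2}}f_N\|_\infty\lesssim N^{3/2-\beta}a_N$ for $N\lesssim t^{-2/3}$ and $\lesssim t^{-1}N^{-\beta}a_N$ for $N\gtrsim t^{-2/3}$. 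Summing with a Cauchy--Schwarz weighted by $Y(\xi)\sim N^\iota+N^{-\iota}$ — whose point is that $\sum_{N}(N^\iota+N^{-\iota})^{-2}<\infty$, which removes the logarithm one otherwise picks up near the threshold $N\sim t^{-2/3}$ — the resulting geometric series sum to $t^{-1+2\beta/3}$ (the range $0\le\beta<1/2$ being what keeps the low-frequency part convergent), giving $\sum_N\|e^{it\Lambda^{3/2}}f_N\|_\infty\lesssim t^{-1+2\beta/3}\sum_{j\le1,k\le3}\|Y(\p)\Lambda^{\beta-1/2}\Sigma^j\Omega^k f\|_2$, i.e. \eqref{dispersivestimate2}, with \eqref{dispersivestimate1} the case $\beta=0$. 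I expect the main obstacle to be the stationary case $|x|\sim tN^{1/2}$: extracting the full $t^{-1}$ decay while spending at most one scaling vector field forces one to avoid the naive second-order stationary-phase expansion and instead combine a radial van der Corput estimate, angular integrations by parts, and the polar mixed-derivative embedding; once the per-block bounds are secured, the dyadic summation via $Y(\p)$ is routine.
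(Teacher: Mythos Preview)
Your approach is correct and takes a genuinely different route from the paper's. The paper does not use a Littlewood--Paley decomposition at all: instead it expands $\widehat f$ in circular harmonics $\widehat f(\rho,\theta)=\sum_m\widehat{f_m}(\rho)e^{im\theta}$, which reduces $e^{it\Lambda^{3/2}}f(x)$ to a sum over $m$ of one-dimensional oscillatory integrals in $\rho$ carrying a Bessel factor $J_m(r\rho)$. The three powers of $\Omega$ emerge there from a sharp stationary-phase lemma for the Bessel asymptotics giving constants $C(m,0)+C(m,1)\lesssim m^2$, followed by the Cauchy--Schwarz step $\sum_m m^2\|\cdot\|_2\lesssim(\sum_m m^6\|\cdot\|_2^2)^{1/2}$. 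The estimate \eqref{dispersivestimate2} is then deduced from \eqref{dispersivestimate1} by a separate low/high frequency split at scale $t^{-2/3}$. Your dyadic organization yields both inequalities simultaneously and makes the role of $Y(\partial)$ --- absorbing the borderline logarithm in the sum over $N$ --- particularly transparent; the paper's route is more self-contained (no black-box two-dimensional stationary phase) and makes the radial structure explicit. One point to tighten in your sketch: near the critical direction you cannot invoke an off-the-shelf stationary-phase lemma, since you only control one radial and a few angular derivatives of the amplitude $\widehat{f_N}$; what actually works is the inner/outer split you allude to --- the inner ball of volume $\sim t^{-1}N^{1/2}$ bounded directly by $\|\widehat{f_N}\|_\infty$, the outer region by iterated IBP --- and it is the summability of the outer angular dyadic shells that fixes the count of $\Omega$'s.
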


\begin{rem} These estimates are probably far from being optimal, but, omitting $Y(\partial)$, they are at the scaling of the equation. They should be understood as giving
the right decay if $f$ is smooth, and has a behaviour at $\infty$ in space (that is, at zero in Fourier) like an inverse power.

A similar estimate was proved by Klainerman~\cite{Klainerman}, see also H\"ormander~\cite{Hormander} for the linear group of the wave equation, and by Wu~\cite{Wu} 
for the group $e^{it\Lambda^{1/2}}$. These authors argued in physical space, whereas we rely here on a Fourier space approach. \end{rem}

After some preliminary steps, the proof reduces the problem to an oscillatory integral, which is then estimated.

\subsection{Preliminary steps}

\subsubsection{Why~(\ref{dispersivestimate2}) follows from~(\ref{dispersivestimate1})}

Split $\left\| e^{it\Lambda^{3/2}} f \right\|_\infty$ as follows, for $j_0 \leq 0$:
$$
\left\| e^{it\Lambda^{3/2}} f \right\|_\infty \leq \left\| P_{<j_0} e^{it\Lambda^{3/2}} f \right\|_\infty 
+ \left\| P_{\geq j_0} e^{it\Lambda^{3/2}} f \right\|_\infty .
$$
Bound the first piece directly, using successively the Hausdorff-Young and the Cauchy-Schwarz inequalities,
$$
\left\| P_{<j_0} e^{it\Lambda^{3/2}} f \right\|_\infty \lesssim \left\| \Theta \left( \frac{\xi}{2^{j_0}} \right) \widehat{f}(\xi) \right\|_1
\leq \left\| \Theta \left( \frac{\xi}{2^{j_0}} \right) |\xi|^{1/2-\beta} \right\|_2 \left\| |\xi|^{\beta-1/2} \widehat{f}(\xi) \right\|_2
\lesssim 2^{j_0 \left( \frac{3}{2} - \beta \right)} \left\| \Lambda^{\beta-1/2} f \right\|_2,
$$
and the second one using~(\ref{dispersivestimate1})
$$
\left\| P_{\geq j_0 } e^{it\Lambda^{3/2}} f \right\|_\infty 
\lesssim \frac{1}{t} \sum_{k=0}^3 \left\| P_{\geq j_0 } Y(\p) \Lambda^{-1/2} \Sigma \Omega^k f \right\|_{2}
\lesssim \frac{1}{t} 2^{-\beta j_0} \sum_{k=0}^3 \left\| Y(\p) \Lambda^{\beta - 1/2} \Sigma \Omega^k f \right\|_{2}.
$$
Optimizing over $j_0$ gives the desired result.

\subsubsection{A Hardy-type estimate}

We denote the polar coordinates in Fourier space by $(\rho,\theta)$. A function $\widehat{f}(\rho)$ is understood to depend only on the radial variable. We will denote
$$
A(\rho) \overset{def}{=} \left\{ \begin{array}{l} \rho^{1/2-\epsilon}\quad \mbox{if $\rho \leq 1$} \\ \rho^{1/2+\epsilon}\quad \mbox{if $\rho \geq 1$}. \end{array} \right.
$$
Then one can estimate by Cauchy Schwarz
$$
|\widehat{f}(\rho)| = \left| \int_\rho^\infty \partial_\rho \widehat{f}(\sigma) \,d\sigma \right| 
\leq \left( \int_{\rho}^\infty \frac{d\sigma}{\sigma^2} \right)^{1/2} \left( \int_\rho^\infty \left| \partial_\rho \widehat{f}(\sigma) \right|^2 \sigma^2\,d\sigma \right)^{1/2} 
\lesssim \frac{1}{\sqrt{\rho}} \|\rho^{1/2} \partial_\rho  \widehat{f}\|_{L^2(\mathbb{R}^2)}.
$$
We record the result
\begin{equation}
\label{boundf}
|\widehat{f}(\rho)| \lesssim \frac{1}{\sqrt{\rho}} \|\rho^{1/2} \partial_\rho \widehat{f}\|_{L^2(\mathbb{R}^2)}.
\end{equation}
By a trivial modification of the above one can prove the slightly more precise bound
\begin{equation}
\label{boundfprecised}
|\widehat{f}(\rho)| \lesssim \frac{1}{A(\rho)} \|A(\rho) \partial_\rho f\|_{L^2(\mathbb{R}^2)} .
\end{equation}

\subsubsection{Estimates on Bessel functions}

\label{subsubbessel}

Recall that the $m$-th Bessel function is given by
$$
J_m(s) = \int_{\mathbb{S}^1} e^{i(s \cos(\theta)+m \theta)}\,d\theta.
$$
It is well-known (see Stein~\cite{Stein}, Chapter VIII) that
$$
J_m(s) = e^{is} g^m_1(s) + e^{-is} g^m_2(s) \quad \mbox{with} \quad \left| \left( \frac{d}{ds} \right)^k g^m_i(s) \right| \leq C(m,k) \frac{1}{\<s\>^{1/2+k}}.
$$
In order to bound the constant $C(m,k)$, we will need the following lemma

\begin{lem}
Assume that $f$ is a fixed smooth function such that $f(0) = 0$, $f'(0) = 0$, $f'(x) \neq 0$ if $x\neq 0$, and $f''(0) = 1$. Let 
$$
I(s) := \int_{\mathbb{R}} u(\theta) e^{is f(\theta)} \,d\theta
$$
where $u$ is a smooth function with support in $[-1,1]$. Then
$$
\left| \left( \frac{d}{ds} \right)^k I(s) \right| \lesssim \frac{1}{\< s \>^{1/2+k}} \left( \|u\|_\infty + \|u'\|_\infty + \|u''\|_\infty \right) \quad \mbox{if $k=0,1$}.
$$
\end{lem}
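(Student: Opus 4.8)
The plan is to prove the estimate by a stationary-phase/van der Corput type argument, treating the cases $k=0$ and $k=1$ together but paying attention to where the lack of decay of $f'$ forces a case split near $\theta = 0$. First I would split the integral using a smooth partition of unity $1 = \chi_0(\theta) + \chi_1(\theta)$ adapted to a small parameter $\lambda$ depending on $s$: $\chi_0$ supported in $\{|\theta| \lesssim \lambda\}$, where the phase $f$ is essentially quadratic (since $f(0)=f'(0)=0$, $f''(0)=1$ gives $f(\theta) = \tfrac12 \theta^2(1 + O(\theta))$ and $f'(\theta) = \theta(1 + O(\theta))$), and $\chi_1$ supported away from the origin, where $f'(\theta) \neq 0$ and hence $|f'(\theta)| \gtrsim c(\operatorname{supp}\chi_1) > 0$.

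On the region $\operatorname{supp}\chi_0$, I would compare $f$ to the model phase $\theta^2/2$: either change variables $\theta \mapsto \sqrt{2f(\theta)}\operatorname{sgn}(\theta)$ (legitimate since $f$ is smooth, even-order-vanishing with $f'' (0) = 1 > 0$, so this map is a smooth diffeomorphism on a neighborhood of $0$), reducing $I$ on this piece to $\int v(\zeta) e^{is\zeta^2/2}\,d\zeta$ with $v$ smooth, compactly supported, and $\|v\|_{C^2} \lesssim \|u\|_{C^2}$; or directly invoke the classical bound $\big|\int v(\zeta)e^{is\zeta^2/2}\,d\zeta\big| \lesssim \<s\>^{-1/2}\|v\|_{C^0}$ together with the differentiated version (differentiating under the integral sign brings down a factor $\zeta^2/2$, and integrating by parts once in $\zeta$ using $e^{is\zeta^2/2} = \frac{1}{is\zeta}\partial_\zeta e^{is\zeta^2/2}$ gains the extra $\<s\>^{-1}$). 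The $C^2$ norm of $u$ is needed precisely to control this integration by parts at the boundary of the support and to absorb the derivatives of the cutoff $\chi_0$, whose derivatives are $O(\lambda^{-1})$, $O(\lambda^{-2})$. On the region $\operatorname{supp}\chi_1$, where $f' \neq 0$, I would integrate by parts repeatedly using $e^{isf} = \frac{1}{isf'}\partial_\theta e^{isf}$; each integration by parts gains a factor $1/s$, and two integrations by parts give a bound $\lesssim s^{-2}(\|u\|_\infty + \|u'\|_\infty + \|u''\|_\infty)$ on this piece, which is better than required for $s \geq 1$. The $k=1$ case on this region follows the same way after differentiating in $s$, which brings down a harmless polynomially-bounded factor $f(\theta)$.

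Finally I would optimize the splitting parameter $\lambda$ (balancing the contribution $\lesssim \lambda$ from the trivial bound on $\operatorname{supp}\chi_0$ in the regime $s\lambda^2 \lesssim 1$ against the oscillatory gain, leading to $\lambda \sim s^{-1/2}$) — though with the change-of-variables approach the splitting can be avoided entirely and one simply uses the global model-phase estimate. For $s \leq 1$ the bound is trivial since $I(s)$ and its $s$-derivative are bounded by $\|u\|_\infty(1 + \|f\|_\infty)$ on the fixed compact support. Assembling the two regions gives the claimed $\<s\>^{-1/2-k}$ decay for $k=0,1$.

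The main obstacle I expect is the bookkeeping in the change of variables $\theta \mapsto \sqrt{2f(\theta)}$ near $\theta=0$: one must verify that the Jacobian and its derivatives stay bounded (which follows from $f''(0)=1$ and smoothness, but requires a short Hadamard-lemma argument writing $f(\theta) = \theta^2 h(\theta)$ with $h$ smooth and $h(0)=1/2$), and that the transported amplitude $v$ genuinely has $\|v\|_{C^2} \lesssim \|u\|_{C^2}$ with a constant independent of $s$. Everything else is a routine van der Corput / non-stationary phase computation.
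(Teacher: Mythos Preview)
Your plan is correct and would prove the lemma. It is, however, organized differently from the paper's argument. You split in the \emph{spatial} variable (near/away from the stationary point) and then invoke a Morse-type change of variables $\theta \mapsto \sqrt{2f(\theta)}\,\operatorname{sgn}\theta$ to reduce to the model phase $\zeta^2/2$; this works but, as you yourself note, carries some bookkeeping for the Jacobian and the transported $C^2$ norm of the amplitude. The paper instead splits in the \emph{amplitude}: it subtracts the first-order Taylor polynomial of $u$ at $0$, writing $I = I_1 + I_2$ with $I_1 = \int (u(0)+\theta u'(0))\chi(\theta) e^{isf}\,d\theta$ and $I_2$ carrying an amplitude that vanishes to second order at $\theta=0$. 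Then $I_1$ is handled by the textbook stationary-phase expansion (fixed, $u$-independent amplitude), while for $I_2$ a \emph{single} global integration by parts suffices, because $u(0)=u'(0)=0$ forces $u'/f'$ and $u f''/(f')^2$ to be bounded on the whole support; this yields the stronger bound $|(\tfrac{d}{ds})^k I_2| \lesssim s^{-1-k}\|u\|_{C^2}$. The paper's route avoids the change of variables and the spatial cutoff entirely, at the price of invoking the standard stationary-phase lemma as a black box for $I_1$; your route is more self-contained but slightly heavier. Either is fine for the application.
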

In the definition of $J_m$, the phase $\cos(\theta)$ has two stationary points, $0$ and $\pi$, both of which are non degenerate. 
Successively restricting attention to one of them by an appropriate cut-off, we see that the above lemma implies
\begin{cor}
The constant $C(m,0)$ and $C(m,1)$ can be bounded by
$$
C(m,0) + C(m,1) \lesssim m^2.
$$
\end{cor}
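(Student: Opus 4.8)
The plan is to carry out the reduction indicated right after the statement of the lemma: localize $J_m(s)=\int_{\mathbb{S}^1}e^{im\theta}e^{is\cos\theta}\,d\theta$ near the two non‑degenerate stationary points $\theta=0$ and $\theta=\pi$ of $\cos\theta$, apply the lemma on each localized piece (reading the $m$‑dependence off the amplitude), and dispose of the complementary region by non‑stationary phase. Fix a smooth partition of unity $1=\chi_0+\chi_\pi+\chi_{\mathrm{reg}}$ on $\mathbb{S}^1$ with $\operatorname{supp}\chi_0\subset(-\tfrac12,\tfrac12)$ and $\chi_0\equiv 1$ near $0$, $\operatorname{supp}\chi_\pi\subset(\pi-\tfrac12,\pi+\tfrac12)$ and $\chi_\pi\equiv 1$ near $\pi$, chosen so that $|\sin\theta|\geq c_0>0$ on $\operatorname{supp}\chi_{\mathrm{reg}}$. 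This splits
\[
J_m(s)=\int_{\mathbb{S}^1}\chi_0 e^{im\theta}e^{is\cos\theta}\,d\theta+\int_{\mathbb{S}^1}\chi_\pi e^{im\theta}e^{is\cos\theta}\,d\theta+\int_{\mathbb{S}^1}\chi_{\mathrm{reg}}e^{im\theta}e^{is\cos\theta}\,d\theta.
\]

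For the $\chi_0$ piece, write $e^{is\cos\theta}=e^{is}e^{-isf(\theta)}$ with $f(\theta)=1-\cos\theta$, so $f(0)=f'(0)=0$, $f''(0)=1$, and $f'(\theta)=\sin\theta\neq 0$ on $\operatorname{supp}\chi_0\setminus\{0\}$ (one may replace $f$ by a fixed smooth extension with $f'\neq 0$ off $0$ without changing the integral, since $\chi_0$ is compactly supported). Hence this piece equals $e^{is}I_0(-s)$ with $I_0(s)=\int u_0(\theta)e^{isf(\theta)}\,d\theta$ and $u_0(\theta)=\chi_0(\theta)e^{im\theta}$ smooth and supported in $[-1,1]$; since $\|u_0\|_\infty\lesssim 1$, $\|u_0'\|_\infty\lesssim m$, $\|u_0''\|_\infty\lesssim m^2$, the lemma gives $\big|\big(\tfrac{d}{ds}\big)^k I_0(-s)\big|\lesssim m^2\<s\>^{-1/2-k}$ for $k=0,1$. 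For the $\chi_\pi$ piece, set $\theta=\pi+\phi$: then $\cos\theta=-\cos\phi$, so $e^{is\cos\theta}=e^{-is}e^{isg(\phi)}$ with $g(\phi)=1-\cos\phi$, and the piece equals $(-1)^m e^{-is}I_\pi(s)$ with $I_\pi(s)=\int u_\pi(\phi)e^{isg(\phi)}\,d\phi$, $u_\pi(\phi)=\chi_\pi(\pi+\phi)e^{im\phi}$; the same amplitude bounds and the lemma give $\big|\big(\tfrac{d}{ds}\big)^k I_\pi(s)\big|\lesssim m^2\<s\>^{-1/2-k}$ for $k=0,1$. Finally, on $\operatorname{supp}\chi_{\mathrm{reg}}$ the phase derivative $\partial_\theta(s\cos\theta)=-s\sin\theta$ does not vanish, so integrating by parts twice in $\theta$ on $\mathbb{S}^1$ (no boundary terms, and $\chi_{\mathrm{reg}}/\sin\theta$ is smooth and periodic), each integration yielding a factor $1/s$ and a factor $m$ whenever the derivative hits $e^{im\theta}$, gives $|R_m(s)|\lesssim m^2 s^{-2}$ for $s\geq 1$; with $|R_m(s)|\leq\|\chi_{\mathrm{reg}}\|_{L^1}\lesssim 1$ for $s\leq 1$ this yields $|R_m(s)|\lesssim m^2\<s\>^{-2}$, and the same argument applied to $R_m'(s)=\int\chi_{\mathrm{reg}}(\theta)e^{im\theta}(i\cos\theta)e^{is\cos\theta}\,d\theta$ gives $|R_m'(s)|\lesssim m^2\<s\>^{-2}$.

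Collecting the three pieces and writing the remainder as $R_m(s)=e^{is}\big(e^{-is}R_m(s)\big)$, one obtains $J_m(s)=e^{is}g_1^m(s)+e^{-is}g_2^m(s)$ with $g_1^m(s)=I_0(-s)+e^{-is}R_m(s)$ and $g_2^m(s)=(-1)^m I_\pi(s)$; by the product rule both satisfy $\big|\big(\tfrac{d}{ds}\big)^k g_i^m(s)\big|\lesssim m^2\<s\>^{-1/2-k}$ for $k=0,1$, which is exactly the claimed bound $C(m,0)+C(m,1)\lesssim m^2$ (valid for $m\geq 1$; for $m=0$ the trivial bound $\lesssim 1$ suffices). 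The only load‑bearing observation is the $m$‑bookkeeping: each $\theta$‑derivative of the cutoff $\chi e^{im\theta}$ costs one power of $m$, and the lemma consumes precisely two such derivatives, producing the factor $m^2$; everything else is the standard decomposition of a Bessel function into two non‑degenerate stationary‑phase contributions plus a non‑stationary remainder, so there is no serious obstacle beyond keeping track of these constants.
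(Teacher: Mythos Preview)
Your proof is correct and follows precisely the approach the paper indicates: the paper's own argument for the corollary is just the one sentence ``Successively restricting attention to one of them by an appropriate cut-off, we see that the above lemma implies,'' and you have carefully written out this reduction, including the partition of unity, the handling of the non-stationary region via integration by parts, and the reassembly of the pieces into the $e^{is}g_1^m+e^{-is}g_2^m$ form. The only content is the observation that the amplitude $\chi e^{im\theta}$ has two derivatives bounded by $m^2$, which is exactly what you identify.
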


\begin{proof} Choosing a compactly supported, smooth cut-off function, equal to one in a neighbourhood of zero, write
$$
I(s) = \int ( u(0) + \theta u'(0)) \chi(\theta) e^{isf(\theta)} \,d\theta + \int \left[ u(\theta) - ( u(0) + \theta u'(0)) \chi(\theta) \right] e^{isf(\theta)} \,d\theta := I_1(s) + I_2(s).
$$
The term $I_1(s)$ is easily estimated by the standard stationary phase lemma. We are thus left with $I_2(s)$. In order to estimate it, it suffices to show that, under the assumptions of the lemma, and if furthermore $u(0) = u'(0) = 0$, then
$$
\left| \left(\frac{d}{ds} \right)^k I(s) \right| \lesssim \frac{1}{s^{k+1}}\left( \|u\|_\infty + \|u'\|_\infty + \|u''\|_\infty \right) 
$$
(this estimate is even stronger than needed). We prove this inequality for $k=0$, the case $k=1$ being similar. The idea is of course to integrate by parts, to get
$$
\int u(\theta) e^{is f(\theta)} \,d\theta = \frac{i}{s} \int \frac{u'(s)}{f'(s)} e^{isf(\theta)}\,d\theta - \frac{i}{s} \int \frac{u(s) f''(s)}{f'(s)^2} e^{is f(\theta)} \,d\theta,
$$
which can easily be estimated by
$$
\left| I(s) \right| \lesssim \frac{1}{s} \left(\|u\|_\infty + \|u'\|_\infty + \|u''\|_\infty \right).
$$
This concludes the proof of the lemma.
\end{proof}

\subsection{The oscillatory integral point of view}

\subsubsection{Reduction of the proposition to an oscillatory integral estimate}

Expand the function $f$ in the statement of the proposition in spherical (or rather circular) harmonics:
$$
\widehat{f}(\xi) = \sum_{m \in \mathbb{Z}} \widehat{f_m}(\rho) e^{im\theta}.
$$
Suppose $x$, a point in physical space, has the polar coordinates $(r,\theta_0)$. Then
$$
\left[ e^{it\Lambda^{3/2}} f \right] (x)= \sum_m e^{im\theta_0} \int \int e^{i(r\rho \cos(\theta)+m \theta)} e^{it \rho^{3/2}} \widehat{f_m}(\rho) \rho \,d\rho\,d\theta = \sum_m e^{im\theta_0} \int J_m(r \rho) e^{it \rho^{3/2}} \widehat{f_m}(\rho) \rho \,d\rho
$$
where $J_m$ is the $m$-th Bessel function. By the results of Section~\ref{subsubbessel}, the problem reduces to estimating an integral of the type 
$$
\int e^{i(t\rho^{3/2} - r\rho)} g(r\rho)\widehat{f}(\rho) \rho\,d\rho
$$
with
\begin{equation}
\label{boundg}
\left| \left( \frac{d}{ds} \right)^k g(s) \right| \lesssim \frac{1}{\< s \>^{1/2+k}} \quad \mbox{for $k=0,1$}
\end{equation}
(the other integral, with phase $t\rho^{3/2} + r\rho$ being easier, we skip it). Denoting
$$
R \overset{def}{=}\frac{r}{t} \quad \mbox{and} \quad \phi(\rho) = \rho^{3/2} - R \rho,
$$
the integral becomes
$$
\int e^{i(t\phi(\rho))} g(R \rho t)\widehat{f}(\rho) \rho\,d\rho.
$$
Our aim will be to prove the
\begin{claim}
\label{clay}
If $g$ satisfies~(\ref{boundg}), then with an implicit constant independent of $R$,
$$
\left| \int e^{it\phi(\rho)} g(R \rho t)\widehat{f}(\rho) \rho\,d\rho \right| \lesssim \frac{1}{t} \left\| A(\rho) \partial_\rho \widehat{f}(\rho) \right\|_{L^2(\mathbb{R}^2)}.
$$
\end{claim}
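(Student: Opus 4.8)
The plan rests on the structure of the phase $\phi(\rho)=\rho^{3/2}-R\rho$: it has a single positive critical point $\rho_0:=\tfrac49 R^2$, at which $\phi''(\rho_0)=\tfrac34\rho_0^{-1/2}\sim R^{-1}\neq 0$, while away from $\rho_0$ one has $|\phi'(\rho)|=\tfrac32\bigl|\rho^{1/2}-\rho_0^{1/2}\bigr|\gtrsim R+\rho^{1/2}$ and $|\phi''(\rho)|/|\phi'(\rho)|^{2}\lesssim\rho^{-1/2}/(R^{2}+\rho)$. (Equivalently, the substitution $\rho=R^{2}\sigma$ turns the oscillatory factor into $e^{itR^{3}(\sigma^{3/2}-\sigma)}$, a phase with fixed critical point and large parameter $tR^{3}$; I argue directly rather than rescale.) Assume $t\geq 1$, write $N:=\|A(\rho)\partial_\rho\widehat f\|_{L^{2}(\mathbb R^{2})}$, and recall the pointwise bound $|\widehat f(\rho)|\lesssim A(\rho)^{-1}N$ from~(\ref{boundfprecised}), its Cauchy--Schwarz companion $\int_{\rho\sim 2^{j}}|\partial_\rho\widehat f(\rho)|\,d\rho\lesssim A(2^{j})^{-1}N$, and the Bessel bounds $|g(s)|+|sg'(s)|\lesssim\langle s\rangle^{-1/2}$. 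I split the $\rho$-integral into \textbf{(i)} $\rho\lesssim t^{-2/3}$; \textbf{(ii)} $\rho\gtrsim t^{-2/3}$ with $\rho$ not comparable to $\rho_0$; and \textbf{(iii)} $\rho\sim\rho_0$, the last region being present precisely when $\rho_0\gtrsim t^{-2/3}$, i.e. $R^{3}t\gtrsim 1$. On region (i) no oscillation is used: with $|g|\lesssim 1$ and the pointwise bound on $\widehat f$, $\int_{\rho\lesssim t^{-2/3}}|g(R\rho t)\widehat f(\rho)|\,\rho\,d\rho\lesssim N\int_{0}^{Ct^{-2/3}}\rho^{1/2+\epsilon}\,d\rho\lesssim t^{-1-2\epsilon/3}N\lesssim t^{-1}N$, with $\epsilon$-decay to spare.

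On region (ii) I decompose dyadically ($1=\sum_j\chi_j$, $\chi_j$ supported on $\rho\sim 2^{j}$) and integrate by parts once via $e^{it\phi}=(it\phi')^{-1}\partial_\rho e^{it\phi}$. On each block the new amplitude is $\tfrac1t\partial_\rho\bigl(\chi_j\,g(R\rho t)\widehat f(\rho)\rho/\phi'\bigr)$; each of its terms is bounded using $|\phi'|\gtrsim R+\rho^{1/2}$, $|\phi''|/|\phi'|^{2}\lesssim\rho^{-1/2}/(R^{2}+\rho)$, $|\chi_j'|\lesssim 2^{-j}$, the bounds on $g$ and $g'$, and the pointwise resp.\ Cauchy--Schwarz bounds on $\widehat f$ resp.\ $\partial_\rho\widehat f$ (the derivative always carried in $L^{2}$, against the weight $A$). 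In the subregion $\rho\gg\rho_0$ one has $|\phi'|\sim\rho^{1/2}$ and each block is $\lesssim t^{-1}2^{\pm\epsilon j}N$, the sign matching the two branches of $A$; in the subregion $\rho\ll\rho_0$ (only possible when $R^{3}t\gtrsim 1$) one has $|\phi'|\sim R$ but the amplitude factor $\rho$ is not controlled by $|\phi'|$, and one recovers the missing powers from the genuine decay $|g(R\rho t)|\lesssim(R\rho t)^{-1/2}$ (applicable since there $R\rho t\gtrsim Rt^{1/3}\gtrsim 1$), which leads to a per-block bound $\lesssim t^{-3/2}R^{-3/2}2^{\pm\epsilon j}N$. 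Summing the geometric series in $j$ — convergence at $\rho=0$ and $\rho=\infty$ being exactly what the $\epsilon$ in $A$ provides — gives $\lesssim(t^{-1}+t^{-3/2}R^{-3/2})N\lesssim t^{-1}N$, using $R^{3}t\gtrsim 1$ in the second term.

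On region (iii) (present only if $R^{3}t\gtrsim 1$) I apply the second-derivative van der Corput lemma on the interval $\rho\sim\rho_0$, where $|\phi''|\sim\rho_0^{-1/2}\sim R^{-1}$: this yields a gain $(t\rho_0^{-1/2})^{-1/2}\sim(R/t)^{1/2}$ times $\|b\|_{L^{\infty}}+\|b'\|_{L^{1}}$ with $b=g(R\rho t)\widehat f(\rho)\rho$. There $\rho\sim\rho_0\sim R^{2}$, $|g(R\rho t)|\lesssim\langle R^{3}t\rangle^{-1/2}\sim(R^{3}t)^{-1/2}$, $|\widehat f|\lesssim A(\rho_0)^{-1}N$, and $\int_{\rho\sim\rho_0}|\partial_\rho\widehat f|\,d\rho\lesssim A(\rho_0)^{-1}N$, so both $\|b\|_{L^{\infty}}$ and $\|b'\|_{L^{1}}$ are $\lesssim(R^{3}t)^{-1/2}R^{2}A(\rho_0)^{-1}N$; multiplying by the van der Corput gain gives $(R/t)^{1/2}(R^{3}t)^{-1/2}R^{2}A(\rho_0)^{-1}N=(R/t)A(\rho_0)^{-1}N\lesssim t^{-1}N$, since $A(\rho_0)\sim R^{1\mp 2\epsilon}$ makes the residual power of $R$ favourable whether $\rho_0\lessgtr 1$. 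Collecting the three regions proves the Claim.

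The main obstacle is the bookkeeping in regions (ii)--(iii): one must verify that the powers of $R$ coming from $\phi'$, $\phi''$, the amplitude factor $\rho$, the argument $R\rho t$ of $g$, and the weight $A(\rho)$ all cancel so as to leave exactly $t^{-1}$ \emph{uniformly in $R$} — the dangerous regime being $\rho\ll\rho_0$ with $R$ large, where one is forced to use the true decay of $g$ rather than just $|g|\lesssim 1$ — and that the small exponent $\epsilon$ in $A$ is precisely what makes the dyadic sums in region (ii) summable at the endpoints $\rho=0,\infty$ while contributing only the harmless factors $R^{\pm\epsilon}$ near the stationary point.
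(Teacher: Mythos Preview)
Your argument is correct and follows the same overall strategy as the paper --- direct estimate at low frequency, integration by parts away from the stationary point $\rho_0=\tfrac49 R^2$, and a stationary-phase treatment near it --- but the organisation differs in two respects worth noting.

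First, near the critical point the paper does \emph{not} invoke van der Corput: it simply isolates the narrow window $|\rho-\rho_0|\leq\sqrt{R/t}$ (whose length is exactly the stationary-phase scale) and bounds the integral there directly using only $|g|\lesssim(R^3t)^{-1/2}$ and $|\widehat f|\lesssim\rho_0^{-1/2}N$; on the flanking intervals $\sqrt{R/t}\leq|\rho-\rho_0|\lesssim R^2$ it then integrates by parts with the finer bound $|\phi'|\sim|h|/R$, $h=\rho-\rho_0$. Your van der Corput step on the whole block $\rho\sim\rho_0$ packages this into a single line; the price is that you must control $\|b'\|_{L^1}$, which your Cauchy--Schwarz bound $\int_{\rho\sim\rho_0}|\partial_\rho\widehat f|\lesssim A(\rho_0)^{-1}N$ does cleanly.

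Second, the paper structures the non-stationary region by explicit intervals with transitions at $\rho=\tfrac{1}{Rt}$ (where $g$ switches from bounded to genuinely decaying) and at $\rho\sim R^2$, treating separately the cases $R^3t\gtrsim 1$ and $R^3t\lesssim 1$. Your dyadic decomposition with the uniform bound $|sg'(s)|\lesssim\langle s\rangle^{-1/2}$ handles all regimes at once, the $\epsilon$ in $A(\rho)$ providing exactly the summability margin; this is a bit more streamlined but obscures where the decay of $g$ is actually needed (only in the subregion $\rho\ll\rho_0$, as you correctly identify). Both routes lead to the same uniform $t^{-1}$ bound.
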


\subsubsection{How Proposition~\ref{decay-prop} follows from Claim~\ref{clay}}
The proposition follows easily from the claim. Indeed, using in addition the results of Section~\ref{subsubbessel}, it implies that
\begin{equation}
\begin{split}
\left\| e^{it\Lambda^{3/2}} f \right\|_\infty & \lesssim \frac{1}{t} \sum_{m} m^2 \left\| A(\rho) \partial_\rho \widehat{f_m}(\rho) \right\|_2 \\
& \lesssim \frac{1}{t} \left( \sum_m m^6 \left\| A(\rho) \partial_\rho \widehat{f_m}(\rho) \right\|_2^2 \right)^{1/2} \\
& \lesssim \frac{1}{t} \sum_{j=0}^1 \sum_{k=0}^3 \left\| Y(\partial) \Lambda^{-1/2} \Sigma^j \Omega^k f \right\|_{L^2(\mathbb{R}^2)}.
\end{split}
\end{equation}

\subsubsection{Estimates on $\phi$}

Estimates on $\phi$ will be needed, we will record them below. First notice that $\phi'$ vanishes for $\rho = \frac{4}{9}R^2$. This motivates the introduction of the new variable
$$
h \overset{def}{=} \rho - \frac{4}{9}R^2.
$$
We start with estimates on $\phi'$:
$$
\phi'(\rho) \sim 
\left\{ \begin{array}{ll}
R & \mbox{if $\rho < \frac{2 R^2}{9}$} \\ 
\frac{h}{R} & \mbox{if $\frac{2}{9}R^2 \leq \rho \leq 10 R^2$} \\
\sqrt{h} \sim \sqrt{\rho} & \mbox{if $\rho \geq 10R^2$}. 
\end{array} \right.
$$
Estimates on $\partial_\rho \frac{1}{\phi'}$ will also be necessary:
$$
\left| \partial_\rho \frac{1}{\phi'} \right| \sim
\left\{ \begin{array}{ll} 
\frac{1}{R^2 \sqrt{\rho}} & \mbox{if $\rho < \frac{2 R^2}{9}$} \\
\frac{R}{h^2} & \mbox{if $\frac{2}{9}R^2 \leq \rho \leq 10 R^2$} \\ 
\frac{1}{\rho^{3/2}} & \mbox{if $\rho \geq 10R^2$}. 
\end{array} \right.
$$

Aside from direct estimates, the basic tool will be integration by parts using the identity:
\begin{equation}
\label{ibp}
\frac{1}{it\phi'(\rho)} \partial_\rho e^{it\phi(\rho)}=e^{it\phi(\rho)}.
\end{equation}

\subsection{Proof of the claim~(\ref{clay}), the case $\rho<\frac{1}{Rt}$}

We assume from now on for simplicity that
$$
\left\| A(\rho) \partial_\rho \widehat{f}(\rho) \right\|_2 = 1.
$$
We estimate here
$$
\int_0^{\frac{1}{Rt}} e^{it\phi(\rho)} g(R \rho t)\widehat{f}(\rho) \rho\,d\rho.
$$

\subsubsection{The case $R^3 \gtrsim \frac{1}{t}$}

Using~(\ref{boundg}),~(\ref{boundf}), and the assumption $R^3 \gtrsim \frac{1}{t}$, it follows easily that
$$
\left|\int_0^{\frac{1}{Rt}} e^{it\phi(\rho)} g(R \rho t)\widehat{f}(\rho) \rho\,d\rho\right| \lesssim \int_0^{\frac{1}{Rt}} \sqrt{\rho} \,d\rho \lesssim \frac{1}{(Rt)^{3/2}} \lesssim \frac{1}{t}.
$$

\subsubsection{The case $R^3 < \frac{1}{100 t}$}

Split then
$$
\int_0^{\frac{1}{Rt}} e^{i(t\phi(\rho))} g(R \rho t)\widehat{f}(\rho) \rho\,d\rho = \int_0^{\frac{1}{t^{2/3}}} + \int_{\frac{1}{t^{2/3}}}^{\frac{1}{Rt}} \dots \overset{def}{=} I + II
$$

\bigskip

\noindent \underline{Estimate for $I$} The term $I$ can be estimated with the help of~(\ref{boundg}),~(\ref{boundf}):
$$
\left| I \right| \lesssim \int_0^{\frac{1}{t^{2/3}}} \sqrt{\rho} \,d\rho \lesssim \frac{1}{t}.
$$ 
\bigskip

\noindent \underline{Estimate for $II$} 
Integration by parts with the help of (\ref{ibp}) gives
\begin{subequations}
\begin{align}
\label{hummingbird1}
II = & - e^{it\phi(t^{-2/3})} \frac{1}{it\phi'\left( t^{-2/3} \right)} g(Rtt^{-2/3}) \widehat{f}(t^{-2/3}) t^{-2/3} \\
\label{hummingbird2}
& + e^{it\phi\left(\frac{1}{Rt}\right)} \frac{1}{it\phi'\left( \frac{1}{Rt} \right)}g\left(Rt \frac{1}{Rt}\right) \widehat{f}\left(\frac{1}{Rt}\right) \frac{1}{Rt} \\
\label{hummingbird3}
& - \int_{\frac{1}{t^{2/3}}}^{\frac{1}{Rt}} e^{it\phi} \frac{1}{it\phi'} Rt g'(Rt\rho) \widehat{f}(\rho) \rho\,d\rho \\
\label{hummingbird4}
& - \int_{\frac{1}{t^{2/3}}}^{\frac{1}{Rt}} e^{it\phi} \frac{1}{it\phi'} g(Rt\rho) \partial_\rho \widehat{f}(\rho) \rho\,d\rho \\
\label{hummingbird5}
& - \int_{\frac{1}{t^{2/3}}}^{\frac{1}{Rt}} e^{it\phi} \frac{1}{it\phi'} g(Rt\rho) \widehat{f}(\rho) \,d\rho \\
\label{hummingbird6}
& - \int_{\frac{1}{t^{2/3}}}^{\frac{1}{Rt}} e^{it\phi} \partial_\rho\left( \frac{1}{it\phi'} \right) g(Rt\rho) \widehat{f}(\rho) \rho\,d\rho.
\end{align}
\end{subequations}
Observe that on the integration domain of $II$, $\phi'(\rho) \sim \sqrt{\rho}$, $\partial_\rho\left( \frac{1}{\phi'(\rho)} \right) \sim\frac{1}{\rho^{3/2}}$, and $g,g'\sim 1$. Using in addition~(\ref{boundg}),~(\ref{boundf}),~(\ref{boundfprecised}) and the Cauchy-Schwarz inequality, this leads to the estimates:
\begin{equation}
\begin{split}
|(\ref{hummingbird1})| & \lesssim \frac{1}{t} \frac{1}{\sqrt{t^{-2/3}}} \frac{1}{\sqrt{t^{-2/3}}}  t^{-2/3} = \frac{1}{t} \\
|(\ref{hummingbird2})| & \lesssim \frac{1}{t} \sqrt{Rt} \sqrt{Rt} \frac{1}{Rt} = \frac{1}{t} \\
|(\ref{hummingbird3})| & \lesssim \int_{\frac{1}{t^{2/3}}}^{\frac{1}{Rt}} \frac{1}{t\sqrt{\rho}} R t\frac{1}{\sqrt{\rho}} \rho \,d\rho = R \int_{\frac{1}{t^{2/3}}}^{\frac{1}{Rt}}d\rho \leq \frac{1}{t} \\
|(\ref{hummingbird4})| & \lesssim \int_{\frac{1}{t^{2/3}}}^{\frac{1}{Rt}} \frac{1}{t\sqrt{\rho}} \rho \partial_\rho \widehat{f}(\rho) d\rho \leq \frac{1}{t} \left(\int \rho A^2 \partial_\rho \widehat{f}(\rho)^2 \,d\rho\right)^{1/2} \left(\int A^{-2} \,d\rho\right)^{1/2} \lesssim \frac{1}{t} \\
|(\ref{hummingbird6})| & \lesssim \int_{\frac{1}{t^{2/3}}}^{\frac{1}{Rt}} \frac{1}{t} \frac{1}{\rho^{3/2}} \frac{1}{A(\rho)} \,d\rho \lesssim \frac{1}{t}.
\end{split}
\end{equation}
(the term~(\ref{hummingbird5}) can be treated like~(\ref{hummingbird4}), thus we skipped it).

\subsection{Proof of the claim~(\ref{clay}), the case $\rho>\frac{1}{Rt}$}

We estimate here
$$
\int_{\frac{1}{Rt}}^\infty e^{it\phi(\rho)} g(R \rho t)\widehat{f}(\rho) \rho\,d\rho.
$$

\subsubsection{The case $R^3 \gtrsim \frac{1}{t}$}

Split the integral as follows
\begin{equation*}
\begin{split}
\int_{\frac{1}{Rt}}^\infty e^{i(t\phi(\rho))} g(R \rho t)\widehat{f}(\rho) \rho\,d\rho & = \int_{1/Rt}^{R^2/10} + \int_{R^2/10}^{\frac{4}{9}R^2 - \sqrt{\frac{R}{t}}} + \int_{\frac{4}{9}R^2 - \sqrt{\frac{R}{t}}}^{\frac{4}{9}R^2 + \sqrt{\frac{R}{t}}} + \int_{\frac{4}{9}R^2 + \sqrt{\frac{R}{t}}}^{10R^2} + \int_{10R^2}^\infty \dots \\
& \overset{def}{=} I + II + III + IV + V.
\end{split}
\end{equation*}

\bigskip
\noindent \underline{Estimate for $I$} Integration by parts with the help of (\ref{ibp}) gives
\begin{subequations}
\begin{align}
\label{baldeagle1}
I = & - e^{it\phi\left(\frac{1}{Rt}\right)} \frac{1}{it\phi'\left( \frac{1}{Rt} \right)}g\left(Rt \frac{1}{Rt}\right) \widehat{f}\left(\frac{1}{Rt}\right) \frac{1}{Rt} \\
\label{baldeagle2}
& + e^{it\phi\left(\frac{R^2}{10}\right)} \frac{1}{it\phi'\left( \frac{R^2}{10} \right)}g\left(Rt \frac{R^2}{10}\right) \widehat{f}\left(\frac{R^2}{10}\right) \frac{1}{Rt} \\
\label{baldeagle3}
& - \int_{\frac{1}{Rt}}^{\frac{R^2}{10}} e^{it\phi} \frac{1}{it\phi'} Rt g'(Rt\rho) \widehat{f}(\rho) \rho\,d\rho \\
\label{baldeagle4}
& - \int_{\frac{1}{Rt}}^{\frac{R^2}{10}} e^{it\phi} \frac{1}{it\phi'} g(Rt\rho) \partial_\rho \widehat{f}(\rho) \rho\,d\rho \\
\label{baldeagle5}
& - \int_{\frac{1}{Rt}}^{\frac{R^2}{10}} e^{it\phi} \frac{1}{it\phi'} g(Rt\rho) \widehat{f}(\rho) \,d\rho \\
\label{baldeagle6}
& - \int_{\frac{1}{Rt}}^{\frac{R^2}{10}} e^{it\phi} \partial_\rho\left( \frac{1}{it\phi'} \right) g(Rt\rho) \widehat{f}(\rho) \rho\,d\rho.
\end{align}
\end{subequations}
Observe that on the integration domain of $II$, $\phi'(\rho) \sim R$, $\partial_\rho\left( \frac{1}{\phi'(\rho)} \right) \sim\frac{1}{R^2 \sqrt{\rho}}$, $g(s)\sim\frac{1}{\sqrt{s}}$, $g'(s)\sim \frac{1}{s^{3/2}}$. Using in addition~(\ref{boundg}),~(\ref{boundf}),~(\ref{boundfprecised}) and the Cauchy-Schwarz inequality, this leads to the estimates:
\begin{equation}
\begin{split}
|(\ref{baldeagle1})| & \lesssim \frac{1}{Rt} \sqrt{Rt} \frac{1}{Rt} \lesssim \frac{1}{(Rt)^{3/2}} \lesssim \frac{1}{t} \\
|(\ref{baldeagle2})| & \lesssim \frac{1}{Rt} \frac{1}{\sqrt{tR^3}} \frac{1}{R} \frac{1}{Rt} = \frac{1}{R^{9/2} t^{5/2}} \lesssim \frac{1}{t} \\
|(\ref{baldeagle3})| & \lesssim \int_{\frac{1}{Rt}}^{\frac{R^2}{10}} \frac{1}{tR} Rt \frac{1}{(Rt\rho)^{3/2}} \widehat{f}(\rho)\rho\,d\rho \lesssim \frac{1}{(Rt)^{3/2}} \lesssim \frac{1}{t} \\
|(\ref{baldeagle4})| & \lesssim \int_{\frac{1}{Rt}}^{\frac{R^2}{10}} \frac{1}{tR} \frac{1}{\sqrt{Rt\rho}} |\partial_\rho \widehat{f}(\rho)|\rho\,d\rho = \frac{1}{(Rt)^{3/2}} \int \rho^{1/2} |\partial_\rho \widehat{f}(\rho\,d\rho)| \lesssim \frac{1}{t} \\
|(\ref{baldeagle6})| & \lesssim \int_{\frac{1}{Rt}}^{\frac{R^2}{10}} \frac{1}{tR^2 \sqrt{\rho}} \frac{1}{\sqrt{\rho}} \frac{1}{\sqrt{Rt\rho}}\rho\,d\rho \lesssim \frac{1}{(Rt)^{3/2}} \lesssim \frac{1}{t}.
\end{split}
\end{equation}

\bigskip
\noindent \underline{Estimate for $II$} Integration by parts with the help of (\ref{ibp}) gives
\begin{subequations}
\begin{align}
\label{cardinal1}
I = & - e^{it\phi\left(\frac{R^2}{10}\right)} \frac{1}{it\phi'\left( \frac{R^2}{10} \right)}g\left(Rt \frac{R^2}{10}\right) \widehat{f}\left(\frac{R^2}{10}\right) \frac{1}{Rt} \\
\label{cardinal2}
& + e^{it\phi\left(\frac{4}{9}R^2 - \sqrt{\frac{R}{t}}\right)} \frac{1}{it\phi'\left( \frac{4}{9}R^2 - \sqrt{\frac{R}{t}} \right)}g\left(Rt \left[ \frac{4}{9}R^2 - \sqrt{\frac{R}{t}}\right] \right) \widehat{f}\left(\frac{4}{9}R^2 - \sqrt{\frac{R}{t}}\right) \left[ \frac{4}{9}R^2 - \sqrt{\frac{R}{t}} \right] \\
\label{cardinal3}
& - \int_{\frac{R^2}{10}}^{\frac{4}{9}R^2 - \sqrt{\frac{R}{t}}} e^{it\phi} \frac{1}{it\phi'} Rt g'(Rt\rho) \widehat{f}(\rho) \rho\,d\rho \\
\label{cardinal4}
& - \int_{\frac{R^2}{10}}^{\frac{4}{9}R^2 - \sqrt{\frac{R}{t}}} e^{it\phi} \frac{1}{it\phi'} g(Rt\rho) \partial_\rho \widehat{f}(\rho) \rho\,d\rho \\
\label{cardinal5}
& - \int_{\frac{R^2}{10}}^{\frac{4}{9}R^2 - \sqrt{\frac{R}{t}}} e^{it\phi} \frac{1}{it\phi'} g(Rt\rho) \widehat{f}(\rho) \,d\rho \\
\label{cardinal6}
& - \int_{\frac{R^2}{10}}^{\frac{4}{9}R^2 - \sqrt{\frac{R}{t}}} e^{it\phi} \partial_\rho\left( \frac{1}{it\phi'} \right) g(Rt\rho) \widehat{f}(\rho) \rho\,d\rho.
\end{align}
\end{subequations}
Observe that on the integration domain of $II$, $\phi'(\rho) \sim \frac{h}{R}$, $\partial_\rho\left( \frac{1}{\phi'(\rho)} \right) \sim\frac{R}{h^2}$, $g(s)\sim\frac{1}{\sqrt{s}}$, $g'(s)\sim \frac{1}{s^{3/2}}$. Using in addition~(\ref{boundg}),~(\ref{boundf}),~(\ref{boundfprecised}) and the Cauchy-Schwarz inequality, this leads to the estimates:
\begin{equation}
\begin{split}
|(\ref{cardinal2})| & \lesssim \frac{1}{t} \frac{R}{\sqrt{R/t}} \frac{1}{\sqrt{tR^3}} \frac{1}{R} R = \frac{1}{t} \\
|(\ref{cardinal3})| & \lesssim \int_{\frac{R^2}{10}}^{\frac{4}{9}R^2 - \sqrt{\frac{R}{t}}} \frac{R}{th}Rt\frac{1}{(Rt\rho)^{3/2}} \frac{1}{\sqrt{\rho}} \rho\,d\rho \lesssim \frac{1}{t} \frac{1}{\sqrt{tR^3}} \log(tR^3) \lesssim \frac{1}{t} \\
|(\ref{cardinal4})| & \lesssim \int_{-\frac{31R^2}{90}}^{- \sqrt{\frac{R}{t}}} \frac{R}{th} \frac{1}{\sqrt{Rt\rho}} \rho |\partial_\rho \widehat{f}(\rho)|\,dh \lesssim \frac{1}{\sqrt{Rt}} \frac{1}{t} \left(\int_{\sqrt{R/t}}^\infty \frac{dh}{h^2} \right)^{1/2} \lesssim \frac{1}{t} \frac{1}{(R^3t)^{1/4}} \lesssim \frac{1}{t} \\
|(\ref{cardinal6})| & \lesssim \int_{-\frac{31R^2}{90}}^{- \sqrt{\frac{R}{t}}} \frac{R}{t h^2} \frac{1}{\sqrt{tR^3}} \frac{1}{\sqrt{\rho}} \rho\,dh \lesssim \frac{\sqrt{R}}{t^{3/2}} \int_{\sqrt{R/t}}^\infty \frac{dh}{h^2} \lesssim \frac{1}{t}.
\end{split}
\end{equation}
(notice that~(\ref{cardinal1}) was already estimated for $I$, and that~(\ref{cardinal5}) can be estimated like~(\ref{cardinal6}).

\bigskip
\noindent \underline{Estimate for $III$} This term can be estimated directly, using simply~(\ref{boundg}) and~(\ref{boundf}):
$$
|III| \lesssim \int_{\frac{4}{9}R^2 - \sqrt{\frac{R}{t}}}^{\frac{4}{9}R^2 + \sqrt{\frac{R}{t}}} \frac{1}{\sqrt{Rt\rho}} \frac{1}{\sqrt{\rho}} \rho d\rho \lesssim \frac{1}{t}.
$$

\bigskip
\noindent \underline{Estimate for $IV$} Identical to the estimate for $II$!

\bigskip
\noindent \underline{Estimate for $V$} Integration by parts with the help of (\ref{ibp}) gives
\begin{subequations}
\begin{align}
\label{bluejay1}
V = & - e^{it\phi\left(10R^2\right)} \frac{1}{it\phi'\left(10R^2 \right)}g\left(Rt 10R^2\right) \widehat{f}\left(10R^2\right) 10 R^2 \\
\label{bluejay2}
& - \int_{10R^2}^{\infty} e^{it\phi} \frac{1}{it\phi'} Rt g'(Rt\rho) \widehat{f}(\rho) \rho\,d\rho \\
\label{bluejay3}
& - \int_{10R^2}^{\infty} e^{it\phi} \frac{1}{it\phi'} g(Rt\rho) \partial_\rho \widehat{f}(\rho) \rho\,d\rho \\
\label{bluejay4}
& - \int_{10R^2}^{\infty} e^{it\phi} \frac{1}{it\phi'} g(Rt\rho) \widehat{f}(\rho) \,d\rho \\
\label{bluejay5}
& - \int_{10R^2}^{\infty} e^{it\phi} \partial_\rho\left( \frac{1}{it\phi'} \right) g(Rt\rho) \widehat{f}(\rho) \rho\,d\rho.
\end{align}
\end{subequations}
Observe that on the integration domain of $V$, $\phi'(\rho) \sim \sqrt{\rho} $, $\partial_\rho\left( \frac{1}{\phi'(\rho)} \right) \sim \frac{1}{\rho^{3/2}}$, $g(s)\sim\frac{1}{\sqrt{s}}$, $g'(s)\sim \frac{1}{s^{3/2}}$. Using in addition~(\ref{boundg}),~(\ref{boundf}),~(\ref{boundfprecised}) and the Cauchy-Schwarz inequality, this leads to the estimates:
\begin{equation}
\begin{split}
|(\ref{bluejay2})| & \lesssim \int_{10R^2}^\infty \frac{1}{t\sqrt{\rho}} tR \frac{1}{(tR\rho)^{3/2}} \frac{1}{\sqrt{\rho}}\rho\,d\rho \lesssim \frac{1}{t} \frac{1}{\sqrt{tR^3}} \lesssim \frac{1}{t} \\
|(\ref{bluejay3})| & \lesssim \int_{10R^2}^\infty \frac{1}{t\sqrt{\rho}} \frac{1}{\sqrt{Rt\rho}} |\partial_\rho \widehat{f}(\rho)|\rho\,d\rho \lesssim \frac{1}{t\sqrt{Rt}} \left( \int_{R^2}^\infty \frac{d\rho}{\rho^2} \right)^{1/2} \lesssim \frac{1}{t} \frac{1}{\sqrt{tR^3}} \lesssim \frac{1}{t} \\
|(\ref{bluejay5})| & \lesssim \int_{10R^2}^\infty \frac{1}{t} \frac{1}{\rho^{3/2}} \frac{1}{\sqrt{Rt\rho}} \frac{1}{\sqrt{\rho}} \rho\,d\rho \lesssim \frac{1}{t} \frac{1}{\sqrt{tR^3}} \lesssim \frac{1}{t}
\end{split}
\end{equation}
(the term~(\ref{bluejay1}) was already estimated, and~(\ref{bluejay4}) can be estimated like~(\ref{bluejay5})).
\subsubsection{The case $R^3 < \frac{1}{100 t}$}
Integration by parts with the help of (\ref{ibp}) gives
\begin{subequations}
\begin{align}
\label{toucan1}
\int_{\frac{1}{Rt}}^\infty e^{i(t\phi(\rho))} g(R \rho t)\widehat{f}(\rho) \rho\,d\rho = & - e^{it\phi\left(\frac{1}{Rt}\right)} \frac{1}{it\phi'\left( \frac{1}{Rt} \right)}g\left(Rt \frac{1}{Rt}\right) \widehat{f}\left(\frac{1}{Rt}\right) \frac{1}{Rt} \\
\label{toucan2}
& - \int_{\frac{1}{Rt}}^\infty e^{it\phi} \frac{1}{it\phi'} Rt g'(Rt\rho) \widehat{f}(\rho) \rho\,d\rho \\
\label{toucan3}
& - \int_{\frac{1}{Rt}}^\infty e^{it\phi} \frac{1}{it\phi'} g(Rt\rho) \partial_\rho \widehat{f}(\rho) \rho\,d\rho \\
\label{toucan4}
& - \int_{\frac{1}{Rt}}^\infty e^{it\phi} \frac{1}{it\phi'} g(Rt\rho) \widehat{f}(\rho) \,d\rho \\
\label{toucan5}
& - \int_{\frac{1}{Rt}}^\infty e^{it\phi} \partial_\rho\left( \frac{1}{it\phi'} \right) g(Rt\rho) \widehat{f}(\rho) \rho\,d\rho
\end{align}
\end{subequations}
Observe that on the domain of the above integral, $\phi' \sim \sqrt{\rho}$ and $\partial_\rho\left( \frac{1}{\phi'(\rho)} \right) \sim\frac{1}{\rho^{3/2}}$. Using in addition~(\ref{boundg}),~(\ref{boundf}),~(\ref{boundfprecised}) and the Cauchy-Schwarz inequality, this leads to the estimates:
\begin{equation*}
\begin{split}
|(\ref{toucan1})| & \lesssim \frac{1}{t} \sqrt{Rt} \sqrt{Rt} \frac{1}{Rt} \frac{1}{t} \\
|(\ref{toucan2})| & \lesssim \int_{\frac{1}{Rt}}^\infty \frac{1}{t\sqrt{\rho}}  Rt \frac{1}{(Rt\rho)^{3/2}}\frac{1}{\sqrt{\rho}}\rho\,d\rho \lesssim \frac{1}{t} \\
|(\ref{toucan3})| & \lesssim \int_{\frac{1}{Rt}}^\infty \frac{1}{t\sqrt{\rho}} \frac{1}{\sqrt{R t \rho}} \partial_\rho \widehat{f}(\rho) \rho\,d\rho \leq \frac{1}{\sqrt{R}t^{3/2}} \left( \int_{1/Rt}^\infty \rho^2 \left| \partial_\rho \widehat{f}(\rho) \right|^2 \,d\rho \right)^{1/2}\left( \int_{1/Rt}^\infty \frac{d\rho}{\rho^2} \right)^{1/2} \lesssim \frac{1}{t} \\
|(\ref{toucan5})| & \lesssim \int_{\frac{1}{Rt}}^\infty \frac{1}{t\rho^{3/2}} \frac{1}{\sqrt{Rt\rho}} \frac{1}{\sqrt{\rho}}\rho\,d\rho \lesssim \frac{1}{t}
\end{split}
\end{equation*}
(the term~(\ref{toucan4}) can be estimated similarly to~(\ref{toucan3}), thus we skipped it).

\section{Traces and elliptic estimates }
\label{appendixtaee}

In this appendix we present bounds on harmonic extensions and traces of functions defined  in a domain  $\mathscr{D}\subset \mathbb{R}^3$.  We assume that $\mathscr{D}$ is bounded by the graph of a smooth function $h$ defined on  $\mathbb{R}^2$ such that
\be\label{A2}
\tag{A1}
\|\Lambda^{\half +\alpha}  h\|_{H^{2s - \half -\alpha }}< \infty,  \qquad  \|\p h\|_{H^{s+3}}    \ll 1\quad \text{for $s\gg 1$ }, \quad 0 < \alpha < \half.
\ee
This implies that $h\in L^p$ for some $p<\8$.
For a function $u$ defined on $\mathscr{D}$,
\[
\mathscr{D}= \{(x,z) ; \;x\in\RR^2,  z \le h(x)\},
\]
we denote by  $u_b(x)= u(x,h(x))$  the trace of  $u$ on the boundary $\SCB =\p \mathscr{D}$, and for a function $\varphi$ defined on $\SCB$ we denote 
by $\varphi_\mathcal{H}$ its harmonic extension to $\mathscr{D}$. Finally we denote  by $\Delta_0^{-1}g$ the solution to the Dirichlet problem on $\mathscr{D}$
with $0$ boundary value
\[
\Delta u = g \quad \text{on  $\mathscr{D}$}, \quad u_b= 0.
\]
The surface $\SCB$ is a Riemannian submanifold of $\RR^3$ with covariant derivative  given by
\begin{equation}\label{cov}
\begin{split}
\CD_e w=( \nabla_e w)^\top = \nabla_e w - (\nabla_e w\cdot N) N  = \nabla_e w +( w\cdot \nabla_eN) N, \qquad e,w \in T\SCB\\
\end{split}
\end{equation}
where $N= \frac{ \nabla(z-h(x))}{|\nabla(z-h(x))|}$, is the unit normal to $\SCB$.  In terms of coordinates $(x^1, x^2)\in \RR^2$
\begin{equation}\label{localcoor}
\SCB \cong \{\RR^2 ;  \quad g_{ij}=  \delta_{ij} + \p_i h \p_j h,  \quad \CD_i = \p_i + C_i \}.
\end{equation}
where $C_i$ are the Christoffel symbols matrices.

Recall that Sobolev spaces on $\mathbb{R}^2$ or $\mathbb{R}^3$ are given by the norm
$$
\| u \|_{H^k(\mathbb{R}^n)} = \left\| \< \p \>^k u \right\|_{L^2(\mathbb{R}^n)}
$$
We will now define Sobolev spaces on $\mathcal{D}$ and $\mathcal{B}$. First, the function spaces $H^k(\SD)$ are defined for any $k$ as restrictions:
$$
\|g\|_{H^k(\mathcal{D})} \overset{def}{=} \inf \left\{ \left\| G \right\|_{H^k(\mathbb{R}^3)}, G_{|\mathbb{D}} = g \right\}.
$$
Notice that this definition is equivalent for $k$ an integer to a more direct one, as can be seen using Lemma~\ref{hreg} below:
$$
\sum_{|s|\leq k} \left\| \p^s g \right\|_{L^2(\mathbb{D})} + \left\| \partial h \right\|_{H^{k-1}(\RR^2)} \left\| \p u \right\|_{L^\infty(\RR^2)} 
\sim \|g\|_{H^k(\mathcal{D})} + \left\| \partial h \right\|_{H^{k-1}(\RR^2)} \left\| \p u \right\|_{L^\infty(\RR^2)}
$$
For $k\in \NN$ and $0\le k\le 2s$, $H^k(\SCB)$ is given by the following norm
$$
\|u\|_{H^k(\SCB)} \overset{def}{=} \sum_{i=0}^k \|\CD^iu\|_{L^2(\SCB)}.
$$ 
By \eqref{A2}, for $3\le  k \le 2s$
\[
\|u\|_{H^k(\RR^2)} + \left\| \partial h \right\|_{H^{k-1}(\RR^2)} \left\| \p u \right\|_{L^\infty(\RR^2)}
\sim \|u\|_{H^k(\mathcal{B})} + \left\| \partial h \right\|_{H^{k-1}(\mathcal{B})} \left\| \p u \right\|_{L^\infty(\mathcal{B})}
\]
The next lemma shows that in  the $L^2(\SCB)$ norm, the Dirichlet to Neumann operator is the same as one derivative.

\begin{lemma}\label{harmonic0}
 Given a harmonic function $\phi$ on $\SD$  then the following are equivalent
\begin{equation*}
\|(\p_z\phi)_b\|_{L^2}\sim \|(\p_1\phi)_b\|_{L^2} +  \|(\p_2\phi)_b\|_{L^2}
\sim \|\CN \phi_b\|_{L^2} \sim \|\nabla\phi_b\|_{L^2} \sim \|\Lambda\phi_b\|_{L^2}.
\end{equation*}
\end{lemma}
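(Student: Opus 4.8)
The plan is to reduce all five norms to the single quantity $\|(\p_z\phi)_b\|_{L^2}$, exploiting the smallness of $\p h$. Write $\beta := (1+|\p h|^2)^{-1/2}$, so that the outward unit normal on $\SCB$ is $N = \beta(-\p h,1)$ and the surface measure is $dS = \beta^{-1}\,dx$, and set $\eps := \|\p h\|_{L^\8}$, which is $\ll 1$ by~\eqref{A2}, so that $\|\beta-1\|_{L^\8}\lesssim\eps^2$. First I would record the elementary relations that carry out the bookkeeping. (i) By Plancherel, $\|\nabla\phi_b\|_{L^2(\RR^2)} = \|\Lambda\phi_b\|_{L^2(\RR^2)}$, where $\nabla\phi_b$ is the $\RR^2$-gradient of the trace, and $\|(\p_1\phi)_b\|_{L^2}+\|(\p_2\phi)_b\|_{L^2}$ is comparable to the $L^2$-norm of $(\p\phi)_b := ((\p_1\phi)_b,(\p_2\phi)_b)$. (ii) The chain rule $\p_i\phi_b = (\p_i\phi)_b + (\p_ih)(\p_z\phi)_b$ lets one pass between $\nabla\phi_b$ and the pair $\bigl((\p\phi)_b,(\p_z\phi)_b\bigr)$ up to an $L^2$-error $\le\eps\|(\p_z\phi)_b\|_{L^2}$. (iii) Since $\phi$ is harmonic (with the decay implicit in the statement) it coincides with the harmonic extension of its trace, so $\CN\phi_b = N\cdot\nabla\phi|_{\SCB} = \beta\bigl[(\p_z\phi)_b - \p h\cdot(\p\phi)_b\bigr]$; hence $\CN\phi_b$ and $(\p_z\phi)_b$ differ in $L^2$ by $\lesssim\eps\|(\p\phi)_b\|_{L^2}+\eps^2\|(\p_z\phi)_b\|_{L^2}$.

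Granting (i)--(iii), everything reduces to the one substantive estimate $\|(\p\phi)_b\|_{L^2(\RR^2)}\sim\|(\p_z\phi)_b\|_{L^2(\RR^2)}$, which I would obtain from a Rellich identity. The vector field $V := (0,0,1)\,|\nabla\phi|^2 - 2(\p_z\phi)\,\nabla\phi$ is divergence-free on $\SD$ since $\Delta\phi=0$. Integrating $\operatorname{div} V = 0$ over $\SD$ and using $N\cdot(0,0,1)=\beta$, $N\cdot\nabla\phi|_{\SCB}=\CN\phi_b$, and $\beta\,dS=dx$, one obtains
$$
\int_{\RR^2}\Bigl[(\p_1\phi)_b^2+(\p_2\phi)_b^2+(\p_z\phi)_b^2\Bigr]\,dx
= \int_{\RR^2}2(\p_z\phi)_b\bigl[(\p_z\phi)_b - \p h\cdot(\p\phi)_b\bigr]\,dx ,
$$
the factor $\beta$ in $\CN\phi_b$ cancelling against $dS=\beta^{-1}\,dx$. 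With $a:=\|(\p\phi)_b\|_{L^2}$ and $b:=\|(\p_z\phi)_b\|_{L^2}$ this is $a^2-b^2 = -2\int_{\RR^2}(\p_z\phi)_b\,\p h\cdot(\p\phi)_b\,dx$, whence $|a^2-b^2|\le 2\eps ab\le\eps(a^2+b^2)$; for $\eps<1$ this forces $a\sim b$. Substituting back into (i)--(iii) gives the five claimed equivalences.

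The delicate point is the justification of the integration by parts in the Rellich identity, i.e.\ that the fluxes of $V$ across $\{|x|=R\}$ and across $z\to-\8$ tend to $0$. This uses that $\phi$ is not an arbitrary harmonic function but (implicitly) one whose trace and normal derivative lie in $L^2$: after flattening $\SD$ by $(x,y)\mapsto(x,h(x)+y)$ the slice energies $\int_{\RR^2}|\nabla\phi(\cdot,y)|^2\,dx$ decay as $y\to-\8$ by the standard $L^2$ theory of harmonic functions in a half-space, while $\nabla\phi\in L^2(\SD)$ kills the lateral flux. Modulo this point the argument is entirely elementary.
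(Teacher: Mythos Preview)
Your proof is correct and follows essentially the same route as the paper. The paper obtains the key identity by writing $0=\int_{\SD}\Delta\phi\,\p_z\phi$ and integrating by parts to get $\int_\SCB N^3|\p_z\phi|^2 = \int_\SCB N^3[(\p_1\phi)^2+(\p_2\phi)^2] - 2\p_z\phi(N^1\p_1\phi+N^2\p_2\phi)$, which is exactly your Rellich identity after unpacking $N^i\,dS$ in graph coordinates; the remaining equivalences (definition of $\CN$, chain rule for $\p_i\phi_b$, Plancherel) are identical to yours, and your explicit discussion of the decay needed to justify the integration by parts is a point the paper leaves implicit.
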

\proof
Since $\phi$ is harmonic, then
\[\begin{split}
0 = \intl_\SCB \Delta \phi \p_z \phi
& = \intl_\SCB N \cdot \nabla \phi \p_z \phi
- \frac 12 N^3 | \nabla \phi |^2 \Rightarrow\\
\intl_\SCB N^3 |\p _ z \phi |^2
& =  \intl_\SCB N^3 [ (\p_{1} \phi)^2 + ( \p_{2} \phi )^2 ]
- 2\p_z\phi ( N^1 \p_{1} \phi + N^2 \p_{2} \phi ) .
\end{split}\]
Since $N^1$ and $N^2$ are $o(1)$ and $N^3$ is $O(1)$
then we  have the first equivalence.
The second equivalence follows from the definition of $\CN$ and from the relative sizes of $N^1, N^2$, and $N^3$
\[
\CN \phi_b = N \cdot (\nabla \phi )_b
 = N^3 (\p_z \phi)_b + N^1( \p_{x^1} \phi)_b + N^2 (\p_{x^2} \phi )_b.
\]
The third equivalence follows  from
\[
 \p_{i} \phi_b = ( \p_{i} \phi + \p_i h \p_z \phi)_b .
\]
The last equivalence follows from using the Fourier transform and Plancherel's Theorem.
\endproof
The lemma above shows that for $1 \le k \le 2s$ we have
\begin{equation}\label{eqhk}
 \|u\|_{H^k(\SCB)} \sim    \|u\|_{H^{k-1}(\SCB)} + \|\CN\CD^{k-1}u\|_{L^2(\SCB)}.
 \end{equation}
In particular  on $H^1(\SCB)$  we have the equivalent norms
$$ \|\varphi\|_{L^{2}(\RR^2)}  + \|\Lambda\varphi\|_{L^2(\RR^2)}\sim \|\varphi\|_{L^{2}(\SCB)} + \|\CN\varphi\|_{L^2(\SCB)}
$$
and by interpolation we can define $H^{\frac12}(\SCB)$ with equivalent norms
\[
 \|\varphi\|_{L^{2}(\RR^2)}  + \|\Lambda^{\frac 12}\varphi\|_{L^2(\RR^2)}\sim \|\varphi\|_{L^{2}(\SCB)} + \|\CN^{\frac 12}\varphi\|_{L^2(\SCB)}.
\]
The spaces $H^{k+1/2}(\SCB)$, for $1\le k\le 2s-\frac12$,  can be defined by
\begin{equation}\label{eqh1/2}
\|\varphi\|_{H^{k+ 1/2}(\SCB)}  \overset{def}{=} \|\varphi\|_{H^{k}(\SCB)} + \|\CN^{\frac 12}\CD^k\varphi\|_{L^2(\SCB)}.
\end{equation}

To show that  harmonic extensions gain  $1/2$ a derivative and that $\Delta_0^{-1}$ gains $2$ derivatives, in an appropriate range of spaces, 
we introduce an $H^{2s+1/2}$ coordinate system that maps  $\mathbb{R}^3_-  \to \mathscr{D}  $ and the $z=0$ plane to $\SCB$.
\begin{lemma}  
\label{hreg}
1.  There exists an extension $h\to \tilde h$ defined on $\mathbb{R}^3$ such that
\[
\|\nabla \tilde h\|_{H^{2s-1/2}(\RR^3)} \lesssim \|\p h\|_{H^{2s-1}(\RR^2)} \qquad \|\nabla \tilde h\|_{H^{s+1/2}(\RR^3)}
\lesssim \|\p h\|_{H^{s}(\RR^2)}
\]

2.  The map $ \tilde y =(\tilde x, \tilde z) \overset{\Psi}{\longrightarrow} y=(x,z) $ defined by
\begin{align*}
x &= \tilde x\\
z &= \tilde z + \tilde h(\tilde x,\tilde z).
\end{align*}
the $\tilde z=0$ plane to $\SCB$, and maps  the lower half space $\mathbb{R}^3_- \to \mathscr{D}$.  $\Psi$ is an $H^{2s+1/2}$ diffeomorphism  on $\mathbb{R}^3$, since
\begin{equation}\label{lippsi}
\|D\Psi- \rm{I}\|_{H^{2s-1/2}} < \infty, \quad \|D\Psi- \rm{I}\|_{H^{s+7/2}} \ll 1.
\end{equation}
\end{lemma}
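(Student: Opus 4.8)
The plan is to obtain everything from a single construction: a smooth extension operator $h \mapsto \tilde h$ that gains exactly half a derivative in the $L^2$ scale. Once that is in hand, Part 2 is essentially bookkeeping.

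\emph{Part 1.} I would take the Littlewood--Paley (Seeley-type) extension
$$
\tilde h(\tilde x,\tilde z) \overset{def}{=} \sum_{j\in\mathbb{Z}} \varphi(2^j \tilde z)\,P_j h(\tilde x),
$$
where $P_j$ is the Littlewood--Paley projection onto $|\xi|\sim 2^j$ and $\varphi$ is a fixed smooth, compactly supported function with $\varphi(0)=1$; since $\sum_j P_j=\mathrm{Id}$ this forces $\tilde h(\cdot,0)=h$, and the formula makes sense for all $\tilde z\in\RR$. The analytic point is the frequency-by-frequency estimate: the $j$-th summand has $\tilde x$-frequency $\sim 2^j$ and is supported in $\tilde z$ at scale $2^{-j}$, so both $\p_{\tilde x}$ and $\p_{\tilde z}$ act on it like multiplication by $2^j$, while $\|\cdot\|_{L^2(\RR^3)}^2$ gains a factor $2^{-j}\|P_jh\|_{L^2(\RR^2)}^2$ from the $\tilde z$-integration. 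Because distinct summands have essentially disjoint $\tilde x$-frequency supports, squared norms add, and one obtains, for every $\sigma$,
$$
\|\nabla\tilde h\|_{\dot H^{\sigma}(\RR^3)}^2 \;\sim\; \sum_j 2^{2j(\sigma+1/2)}\|P_jh\|_{L^2(\RR^2)}^2 \;\sim\; \|\p h\|_{\dot H^{\sigma-1/2}(\RR^2)}^2 .
$$
Specializing to $\sigma=2s-\tfrac12$ and $\sigma=s+\tfrac12$ yields the two homogeneous bounds of Part 1; the inhomogeneous ($L^2(\RR^3)$) and low-frequency contributions, which involve $\|\Lambda^{1/2}h\|_{L^2}$ rather than $\|\p h\|_{L^2}$, are absorbed using the remaining hypotheses in \eqref{A2} (this is exactly why \eqref{A2} controls $\Lambda^{1/2+\alpha}h$ and asserts $h\in L^p$).

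\emph{Part 2.} From $\Psi(\tilde x,\tilde z)=(\tilde x,\tilde z+\tilde h(\tilde x,\tilde z))$ and $\tilde h(\cdot,0)=h$, the plane $\{\tilde z=0\}$ is carried onto $\SCB$. The Jacobian is $D\Psi=\mathrm{I}+E$, where the only nonzero entries of $E$ are $\p_{\tilde x}\tilde h$ and $\p_{\tilde z}\tilde h$, so $E$ is built solely from components of $\nabla\tilde h$; hence \eqref{lippsi} is immediate from Part 1, namely $\|E\|_{H^{2s-1/2}}\lesssim\|\p h\|_{H^{2s-1}}<\infty$, while taking $\sigma=s+\tfrac72$ in the displayed estimate gives $\|E\|_{H^{s+7/2}}\lesssim\|\p h\|_{H^{s+3}}\ll1$ by \eqref{A2}. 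By Sobolev embedding ($s$ large) $\|E\|_{L^\infty}\ll1$, so $\det D\Psi=1+\p_{\tilde z}\tilde h>0$ everywhere and, for each fixed $\tilde x$, the map $\tilde z\mapsto\tilde z+\tilde h(\tilde x,\tilde z)$ is a strictly increasing bijection of $\RR$; together with $x=\tilde x$ this makes $\Psi$ a global bijection, hence a global $C^1$ diffeomorphism with uniformly invertible differential. The $H^{2s+1/2}$ regularity of $\Psi$ and of $\Psi^{-1}$ then follows from $\tilde h\in H^{2s+1/2}(\RR^3)$ (the half-derivative gain of Part 1 applied to $\tilde h$ itself, using $h\in\dot H^{2s}$ from \eqref{A2}) together with the standard Sobolev composition and inversion estimates for $(\mathrm{I}+E)^{-1}$ in the algebra $H^{2s-1/2}(\RR^3)$. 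Finally, for $\tilde z<0$ monotonicity gives $z=\tilde z+\tilde h(\tilde x,\tilde z)<\tilde h(\tilde x,0)=h(\tilde x)$, so $\Psi(\RR^3_-)=\mathscr{D}$.

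The main obstacle is entirely in Part 1: one must use an extension that is \emph{smooth across} $\{\tilde z=0\}$ — the naive harmonic/Poisson extension $e^{-|\tilde z|\langle\nabla\rangle}h$ is only Lipschitz in $\tilde z$ and would throw $D\Psi$ out of $H^{2s+1/2}$ — while simultaneously gaining exactly half a derivative and giving matching control of the transverse derivative $\p_{\tilde z}\tilde h$ at the top regularity. Once the frequency-localized bookkeeping above is arranged this is routine, but it is where all the analysis resides; Part 2 is then formal.
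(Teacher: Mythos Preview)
Your argument is correct, but you construct the extension differently from the paper. The paper defines $\tilde h$ by a single Fourier multiplier,
\[
\hat{\tilde h}(\xi,\eta)=c\,\frac{|\xi|^{2\alpha}}{(|\xi|^2+\eta^2)^{\alpha+1/2}}\,\hat h(\xi),\qquad c^{-1}=\int_{\RR}\frac{ds}{(1+s^2)^{\alpha+1/2}},
\]
and then reads off $\||\xi|^a|\eta|^b\hat{\tilde h}\|_{L^2}\lesssim\||\xi|^{a+b-1/2}\hat h\|_{L^2}$ in one Plancherel line by the change of variables $\eta=|\xi|s$. Your Littlewood--Paley extension $\sum_j\varphi(2^j\tilde z)P_jh$ is the dyadic physical-space counterpart of the same idea (indeed, taking the inverse Fourier transform of the paper's multiplier in $\eta$ gives a kernel of the form $K(|\xi|\tilde z)$, exactly a continuous version of your $\varphi(2^j\tilde z)$). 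What the paper's approach buys is brevity: the half-derivative gain is a single scaling computation with no almost-orthogonality bookkeeping. What your approach buys is transparency about the mechanism and easy transfer to non-Hilbert function spaces; you are also more explicit than the paper about Part~2, in particular about why $\Psi$ is a global diffeomorphism and why the $H^{s+7/2}$ bound on $D\Psi-\mathrm{I}$ follows from the general estimate at regularity $\sigma=s+7/2$ (the paper simply asserts that Part~2 follows from (A1) and the definition of $\tilde h$). Your flagging of the low-frequency issue --- that the bottom of the inhomogeneous norm really sees $\|\Lambda^{1/2}h\|_{L^2}$ rather than $\|\p h\|_{L^2}$ --- is accurate and is glossed over in the paper as well.
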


\proof
Define $\tilde h$ via its Fourier transform $\hat {\tilde h}$ ($\xi$ denoting the dual variable to $(x_1,x_2)$ and $\eta$ the dual variable to $z$)
\[
\hat {\tilde h}(\xi, \eta) = c\frac{|\xi|^{2\alpha}}{(|\xi|^2 +\eta^2)^{\alpha + 1/2}}\hat h(\xi); \quad c^{-1} = \int \frac 1{(1+s^2)^{\alpha + 1/2}}ds.
\]
Then $\tilde h(x,0)=h(x)$ and for $1\le a+b\le 2s +1/2$
\begin{gather*}
|\xi|^a|\eta|^b\hat{\tilde h}(\xi, \eta)  = |\xi|^{a+b -1}\frac{\left(\frac{|\eta|}{|\xi|}\right)^b}
{\left(1+\left(\frac{|\eta|}{|\xi|}\right)^2\right)^{\alpha +1/2}}\hat h(\xi)\, \Rightarrow\,
\||\xi|^a|\eta|^b\hat{\tilde h}\|_{L^2}  \lesssim  \||\xi|^{a+ b -1/2} \hat h\|_{L^2}
\end{gather*}
By assumption \eqref{A2} and the definition of  $\tilde h$ {\it 2.}  follows.
\endproof

The next proposition states that for $0\le a \le 2s$,    {\it 1.}  $g\to g\circ\Psi$ is  one to one  and onto from $H^{a +\frac 12}(\SD)$  to $ H^{a +\frac12}(\mathbb{R}^3_-)$,   {\it 2.}  harmonic extension of  $f\in H^a(\SCB)$ gains $1/2$ derivative,  and {\it 3.}  $\nabla\Delta_0^{-1}\nabla$  is bounded on $H^{a-\frac 12}(\SD)$.

\begin{proposition} \label{elliptic} Under assumption \eqref{A2}  and  for $a\in \NN/2$  and $3 \le a \le 2s$  we have:\\

1. For $g\in H^{a+\frac12}(\SD)$
\begin{align*}
&\|  g\|_{H^{a+\frac12}(\SD) } \lesssim \| g\circ \Psi\|_{H^{a+\frac12}(\mathbb{R}^3_-) }+  \| \p h\|_{H^{a-1}(\mathbb{R}^2)} \| \nabla (g\circ\Psi)\|_{W^{[\frac a2] +1, \8}(\mathbb{R}^3_-)}\\
&\| g\circ \Psi \|_{H^{a+\frac12}(\mathbb{R}^3_-)} \lesssim  \| g\|_{H^{a+ \frac12}(\SD)} +  \| \p h\|_{H^{a-1}(\mathbb{R}^2)} \| \nabla g\|_{W^{[\frac a2] +1, \8}(\SD)}.
\end{align*}

2.  For $ f\in H^{a}(\SCB)$
\begin{alignat*}{2}
&\| \nabla f_\CH\|_{H^{a-\frac12}(\SD) }  && \lesssim    \| f\|_{H^{a }(\SCB)}+  \| \p h\|_{H^{a-1}(\mathbb{R}^2) } \|\nabla f\|_{W^{[\frac a2] +1, \8}};
\end{alignat*}

3.  For  $g\in H^{a-1/2}(\SD)$
\[
\begin{split}
\|\nabla \Delta^{-1}_0 \nabla g\|_{H^{a-\frac12}(\SD)} \lesssim & \|  g\|_{H^{a-\frac12}(\SD)}  + \|\p h\|_{H^{a-1}(\SCB)}\|\nabla\Delta^{-1}_0 \nabla g\|_{W^{[\frac a2] +1, \8}(\SD)}\\
 \lesssim & \|  g\|_{H^{a-\frac12}(\SD)}  + \|\p h\|_{H^{a-1}(\SCB)}\|\nabla g\|_{H^{[\frac a2] +2}(\SD)}
\end{split}
\]
\end{proposition}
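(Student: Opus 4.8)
The plan is to transport the whole problem to the flat half–space $\mathbb R^3_-$ through the diffeomorphism $\Psi$ of Lemma~\ref{hreg}, prove the three estimates in the model case by explicit Fourier computations, and then push the results back to $\SD$, paying a price controlled by $\|D\Psi-\mathrm I\|$. Part~1 is the change–of–variables lemma on which Parts~2 and~3 rest, so I would establish it first, then the flat harmonic–extension and Dirichlet–solver estimates, and finally the perturbative transfer to the curved domain.

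\textbf{Part 1.} The two inequalities follow from one another by interchanging the roles of $\Psi$ and $\Psi^{-1}$ (since $D\Psi^{-1}=(D\Psi)^{-1}\circ\Psi^{-1}$, and $(D\Psi)^{-1}$ satisfies the same bounds as $D\Psi$ by a Neumann series using $\|D\Psi-\mathrm I\|_{H^{s+7/2}}\ll1$), so it suffices to prove one. I would argue by induction on $\lceil a+\half\rceil$ using the Fa\`a di Bruno chain rule: a derivative of total order $\le a+\half\le 2s+\half$ applied to $g\circ\Psi$ is a sum of terms $\big((\p^\ell g)\circ\Psi\big)\prod_r\p^{m_r}\Psi$ with $\sum_r m_r\le a+\half$ and $\ell\le a+\half$. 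The extreme term, in which all the derivatives land on $\Psi$, is $\big((\nabla g)\circ\Psi\big)\,\p^{a+\half}\tilde h$; since $a-\half\le 2s-\half$, Lemma~\ref{hreg} and its proof give $\|\p^{a+\half}\tilde h\|_{L^2}\lesssim\|\nabla\tilde h\|_{H^{a-\half}}\lesssim\|\p h\|_{H^{a-1}(\RR^2)}$, and a tame (Moser) distribution of the derivatives yields exactly the stated error $\|\p h\|_{H^{a-1}}\|\nabla(g\circ\Psi)\|_{W^{[a/2]+1,\infty}}$. Every intermediate term carries either a factor of $D\Psi-\mathrm I$ or a high derivative of $\tilde h$ balanced against low–order factors, so by the product law in $H^{a+\half}$ together with $\|D\Psi-\mathrm I\|_{H^{s+7/2}}\ll1$ it is absorbed by the left–hand side or dominated by $\|g\|_{H^{a+\half}(\SD)}$.

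\textbf{Parts 2 and 3.} On $\mathbb R^3_-$ the harmonic extension of $\varphi$ is $e^{z|\p|}\varphi$ for $z<0$, and $\int_{-\infty}^0 e^{2z|\xi|}\,dz=\tfrac1{2|\xi|}$ gives $\|\nabla(e^{z|\p|}\varphi)\|_{H^{a-\half}(\mathbb R^3_-)}\sim\|\varphi\|_{H^a(\RR^2)}$; likewise $\Delta_0^{-1}$ on the half–space is realized by odd reflection in $z$, so for $b\ge0$ one has the up–to–the–boundary gain $\|\Delta_0^{-1}G\|_{H^{b+2}(\mathbb R^3_-)}\lesssim\|G\|_{H^{b}(\mathbb R^3_-)}$, whence componentwise $\nabla\Delta_0^{-1}\nabla$ is bounded on $H^{b}(\mathbb R^3_-)$. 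Pulling the Laplacian of $\SD$ back through $\Psi$ produces $\mathcal L=\Delta+\p_i\big(q^{ij}\p_j\,\cdot\,\big)$, where the $q^{ij}$ are built from the entries of $D\Psi-\mathrm I$, so $\|q^{ij}\|_{L^\infty}\ll1$ by Sobolev embedding from $\|q^{ij}\|_{H^{s+7/2}}\ll1$. For the harmonic extension I would set $v=f_\CH\circ\Psi$, which solves $\Delta v=-\p_i(q^{ij}\p_j v)$ with trace $f$, hence $v=e^{z|\p|}f+\Delta_0^{-1}\big(-\p_i(q^{ij}\p_j v)\big)$; for the Dirichlet solver I would run $u=\Delta_0^{-1}G-\Delta_0^{-1}\p_i(q^{ij}\p_j u)$, obtaining existence in a low norm (where the operator norm is $\lesssim\|q\|_{L^\infty}\ll1$) and then the $H^{a-\half}$ bound as an a priori estimate. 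In both cases the small–coefficient term is split tamely: the piece with $q$ in $L^\infty$ is absorbed by the left–hand side, and the piece with $q$ in high Sobolev norm contributes $\|q\|_{H^{a-1}}\|\nabla v\|_{W^{[a/2]+1,\infty}}\lesssim\|\p h\|_{H^{a-1}}\|\nabla v\|_{W^{[a/2]+1,\infty}}$. Transporting back to $\SD$ via Part~1 completes Parts~2 and~3; the weaker second bound in Part~3 is just the flat/interior elliptic estimate $\|\nabla\Delta_0^{-1}\nabla G\|_{W^{[a/2]+1,\infty}}\lesssim\|\nabla G\|_{H^{[a/2]+2}}$ followed by Sobolev.

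\textbf{Main obstacle.} The genuine difficulty is that the perturbation $q^{ij}$ is small only in $H^{s+7/2}$, which lies \emph{below} the top regularity $H^{a-\half}$ one must control when $a$ is large, so the commutator between $\Delta$ and $\mathcal L$ cannot be absorbed crudely; it must be handled by a tame splitting in which the small factor always sits in $L^\infty$ while the high Sobolev norm is transferred onto $\p h$. This is precisely what forces the asymmetric shape of all three estimates — $\|\p h\|_{H^{a-1}}$ on one side and the low norm $W^{[a/2]+1,\infty}$ on the other — and getting this bookkeeping exactly right is the crux of the argument. A secondary technical point is the up–to–the–boundary regularity of $\Delta_0^{-1}$ on $\SD$ at the top of the range $a\le 2s$, which is where the full $H^{2s+\half}$ smoothness of $\Psi$ from Lemma~\ref{hreg}, and not merely its smallness, enters.
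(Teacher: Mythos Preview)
Your strategy is essentially the paper's: transport to $\mathbb R^3_-$ via $\Psi$, do the flat elliptic estimates, and handle the perturbation from $D\Psi-\mathrm I$ by a tame product splitting that puts the small factor in $L^\infty$ and the large Sobolev norm on $\p h$. For Parts~2 and~3 the paper does exactly what you describe, with the cosmetic difference that it treats both at once by solving $\Delta U=\nabla g$ on $\SD$ with Dirichlet data $f$, flattening to get $\Delta\phi+2\operatorname{div}(BD\phi)+c\cdot D\phi=\nabla\tilde g+A\nabla\tilde g$, and then specializing to $g=0$ or $f=0$; your separate fixed-point formulations amount to the same thing.

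The one place where your write-up is incomplete is Part~1 for half-integer $a+\tfrac12$ (i.e.\ $a\in\NN$). Fa\`a di Bruno is a pointwise identity for integer-order derivatives, so your induction on $\lceil a+\tfrac12\rceil$ only delivers the $H^a$ bound directly; it does not by itself control the extra half-derivative. The paper handles this case separately: extend $g$ to $\RR^3$, use the Gagliardo seminorm
\[
\|\phi\|_{H^{a+\frac12}}^2=\|\phi\|_{H^a}^2+\iint\frac{|D^a\phi(x)-D^a\phi(y)|^2}{|x-y|^4}\,dx\,dy,
\]
and then invoke the bi-Lipschitz bound $\|D\Psi-\mathrm I\|_{H^{s+7/2}}\ll1$ on $\Psi$ and $\Psi^{-1}$ to pass the difference quotient through the composition. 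This is a routine step, but it should be stated, since without it the top half-derivative is unaccounted for. Your identification of the tame splitting as the crux is correct; just add the Gagliardo argument for the fractional endpoint.
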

\proof[Proof of  1]    For $a+1/2$  an integer the inequalities follow  immediately  from the chain rule.  For $a$ an integer, extend $g$ to a function $g_e\in H^{a+1/2}(\RR^3)$ with $ \|g_e\|_{H^{a+1/2}(\RR^3)}\le 2    \|g\|_{H^{a+1/2}(\SD)} $ and use  the equivalent norm on $H^{a+\frac12}(\RR^3)$
\[
\| \phi\|_{H^{a + \frac12}}^2 =   \| \phi\|_{H^{a}}^2 +\int\int\frac{ |D^a\phi(x)-D^a\phi(y)|^2}{|x-y|^{4}}dxdy,
\]
and \eqref{lippsi} to  conclude 1. \\

\noindent {\it Proof of 2. and 3.} For $g\in H^{a- \half}(\SD)$, solve $\Delta U = \nabla g $ on $\SD$ with Dirichlet data $f$. Change  variables using~$\Psi$
to flatten the domain, and let $\phi = U \circ \Psi$, and $\tilde g =  g \circ \Psi$;  then $D U = \left((I + A)D \phi \right)\circ \Psi^{-1}$
where the matrix $A$ depends on $\p \tilde h$, and thus  satisfies
\begin{equation}\label{eqA}
\|A\|_{H^{2s -1/2}( \RR^3_- ) } < \infty,  \qquad \|A\|_{H^{s + 5/2}( \RR^3_- )  }\ll 1 .
\end{equation}
Consequently $\phi$ satisfies
\[ \begin{split}
&\mbox{trace}[(D + AD)^2]\phi =\Delta \phi +2 \operatorname{div} (B D \phi)  + c\cdot D\phi = \nabla \tilde g +  A \nabla \tilde g \qquad x \in \RR^3_-\\[.5 em]
&\phi( x,0)  = \phi_b (x) = f\circ \Psi(x),
\end{split} \]
where $c$ depends on $\p^2 \tilde h$, and $B$ is a linear combination of $A$ and $A^2$.
By letting $\Delta^{-1}_*$ be the inverse of  $\Delta$  on $\RR^3_-$,  we write
\[
 D \phi =D  (\phi_b)_\CH +D   \Delta^{-1}_*\nabla \tilde g  + D \Delta^{-1}_*(B \nabla \tilde g  -c\cdot D\phi)  - 2\nabla \Delta^{-1}_* \operatorname{div} BD \phi.
\]
By standard elliptic estimates on $\RR^3_-$, and Sobolev embedding  we have
\[
\begin{split}
&\|\nabla  \phi \|_{{L^2} ( \RR^3_- ) } 
\lesssim   \| \phi_b \|_{ \dot H^{\frac12} ( \RR^2) } + \|\tilde g\|_{L^2 ( \RR^3_- ) }
+ \|BD \phi\|_{L^2 ( \RR^3_- ) }  +  \|B \nabla \tilde g\|_{L^{\frac65} ( \RR^3_- ) }  +  \|c\cdot D\phi\|_{L^{\frac65} ( \RR^3_- ) }.\\
&\|\nabla  \phi \|_{H^{a-\frac12} ( \RR^3_- ) } 
\lesssim   \| \phi_b \|_{ H^{a} ( \RR^2) }
+ \|BD \phi\|_{H^{a-\frac12} ( \RR^3_- ) }+ \|\tilde g\|_{H^{a-\frac12} ( \RR^3_- ) } \\
&\phantom{\|\nabla  \phi \|_{H^{a-\frac12} ( \RR^3_- ) } 
\lesssim } +  \|B \nabla \tilde g\|_{H^{a-\frac32} ( \RR^3_- ) }   +  \|B \nabla \tilde g\|_{L^{\frac65} ( \RR^3_- ) } +  \|c\cdot D\phi\|_{H^{a-\frac32} ( \RR^3_- ) }   +  \|c\cdot D\phi\|_{L^{\frac65} ( \RR^3_- ) }.
\end{split}
\]
By setting $g =0$, or $f=0$, and using \eqref{eqA}, we conclude {\it 2.} or  {\it 3.},  for the stated ranges of $a$.
\endproof

The next proposition shows that
\[
\|u\|_{H^q(\SCB)}\sim \sum_{i=0}^q \|\CN^iu\|_{L^2(\SCB)}
\]
Recall that $\tilde  \p_i = \p_i +h_i \p_z$, and    that for  a vector field $\mathcal{X}= X\cdot \nabla$ on $T\SCB$, the commutator
$[\mathcal{X}, \CN]$ acting on $\varphi: \mathscr{B}\to \RR$  is given by:
\begin{equation}\label{comm-na}
[\mathcal{X}, \CN ] \varphi  = N \cdot \nabla \Delta^{-1}_0(2  {\rm div}((\nabla X_\CH)\nabla\varphi_\CH))+ (XN ) \cdot ( \nabla \varphi_\CH)   -N \cdot ( D X_\CH D\varphi _\CH )
 \end{equation}
Thus by proposition  \ref{elliptic}
\begin{equation}\label{eqnd}
\|[\tilde \p, \CN] \varphi\|_{L^2}\sim \|(D^2h)D\varphi\|_{L^2}
\end{equation}
\begin{proposition}\label{harmonic}
For any $\varphi:\SCB\to \RR$ and $3\le q\le  2s$ with $q\in\mathbb{N}_0/2$
\begin{equation}\label{hqnorm}\begin{cases}
\|\CN^{q}\varphi\|_{L^2}   \lesssim \sum_{i=0}^q \|\Lambda^iu\|_{L^2(\RR^2)}  +   \| \p h\|_{H^{q-1}} \| \nabla \varphi\|_{L^\infty}\\[.5em]
\|\Lambda^{q} \varphi\|_{L^2} \lesssim  \sum_{i=0}^q \|\CN^iu\|_{L^2(\SCB)}  +   \| \p h\|_{H^{q-1}} \| \nabla \varphi\|_{L^\infty}
\end{cases}
\end{equation}
\end{proposition}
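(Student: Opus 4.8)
The plan is to prove, in one stroke, the stronger two-sided estimate
$$
\sum_{i=0}^q\|\CN^i\varphi\|_{L^2(\SCB)}\;\sim\;\|\varphi\|_{H^q(\SCB)}\;+\;\|\p h\|_{H^{q-1}}\|\nabla\varphi\|_{L^\infty},
$$
which, together with the equivalence $\|\varphi\|_{H^q(\SCB)}\sim\|\varphi\|_{H^q(\RR^2)}+\|\p h\|_{H^{q-1}}\|\nabla\varphi\|_{L^\infty}$ recorded just after \eqref{A2} and the identity $\|\varphi\|_{H^q(\RR^2)}\sim\|\varphi\|_{L^2}+\|\Lambda^q\varphi\|_{L^2}$, gives both inequalities of the proposition at once (the first is the bound of the $L^2(\SCB)$ side by the Sobolev side, the second the reverse). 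I would argue by strong induction on $q\in\NN_0/2$, raising $q$ in steps of $\frac12$. For $q\le 1$ the estimate is exactly Lemma~\ref{harmonic0} together with the interpolation definition of $H^{1/2}(\SCB)$, and there the error term is harmless because $\|\p h\|_{H^{q-1}}\ll1$ by \eqref{A2}.

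For the inductive step, I would assume the estimate for all admissible $q'<q$ (for arbitrary functions) and establish it at level $q$. Applying Lemma~\ref{harmonic0} to the function $\CN^{q-1}\varphi$ gives $\|\CN^q\varphi\|_{L^2(\SCB)}\sim\|\nabla(\CN^{q-1}\varphi)\|_{L^2(\RR^2)}$, so it suffices to compare the latter with $\|\Lambda^q\varphi\|_{L^2}$. Commuting the flat derivative past the powers of $\CN$,
$$
\nabla\CN^{q-1}\varphi\;=\;\CN^{q-1}(\nabla\varphi)\;+\;\sum_{k=0}^{q-2}\CN^{k}\,[\nabla,\CN]\,\CN^{q-2-k}\varphi ,
$$
the main term is handled by the inductive hypothesis applied to $\nabla\varphi$ at level $q-1$: it produces $\|\nabla\varphi\|_{H^{q-1}(\RR^2)}+\|\p h\|_{H^{q-2}}\|\nabla^2\varphi\|_{L^\infty}$, the first of which is $\sim\|\Lambda^q\varphi\|_{L^2}+\|\nabla\varphi\|_{L^2}$, while the second is disposed of by interpolating $\|\nabla^2\varphi\|_{L^\infty}$ between $\|\nabla\varphi\|_{L^\infty}$ and a high Sobolev norm of $\varphi$ (bounded by the left-hand side) and invoking Young's inequality. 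Each commutator term is an operator of order $q-1$ applied to $\varphi$: setting $w_k:=[\nabla,\CN]\CN^{q-2-k}\varphi$, the commutator formula \eqref{comm-na} and the elliptic estimates of Proposition~\ref{elliptic} show, as in \eqref{eqnd}, that $[\nabla,\CN]$ sends $H^{m+1}(\SCB)$ into $H^m(\SCB)$ modulo errors of the admissible type, so $\|w_k\|_{H^k(\SCB)}\lesssim\|\CN^{q-2-k}\varphi\|_{H^{k+1}(\SCB)}\lesssim\|\varphi\|_{H^{q-1}(\SCB)}$ plus errors, and then the inductive hypothesis at level $k$ applied to $w_k$ controls $\|\CN^k w_k\|_{L^2}$ by the same quantity, which the inductive hypothesis at level $q-1$ bounds as required. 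Reading the same chain of inequalities backwards gives the lower bound, closing the induction; the half-integer values of $q$ are treated identically with \eqref{eqh1/2} and $\CN^{1/2}$ in place of \eqref{eqhk} and $\CN$.

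All the analytic inputs — Lemma~\ref{harmonic0}, the commutator identity \eqref{comm-na} with its corollary \eqref{eqnd}, and the elliptic bounds of Proposition~\ref{elliptic} — are already available, so the only real work is the bookkeeping of error terms. Concretely, I would need to check that every product $(D^{a}h)(D^{b}\varphi)$ generated by the iterated commutators has $a\le q$, so that $h$ is never differentiated beyond the range $q\le 2s$ allowed by \eqref{A2}, and that each such product is either of the form (small constant)$\times$(quantity already dominated by the left-hand side) — hence absorbable thanks to the smallness built into \eqref{A2} — or, after a Gagliardo--Nirenberg interpolation trading intermediate $L^\infty$ norms of $\nabla\varphi$ for $\|\nabla\varphi\|_{L^\infty}$ plus an absorbable high-order Sobolev norm, is controlled by the single term $\|\p h\|_{H^{q-1}}\|\nabla\varphi\|_{L^\infty}$. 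That combinatorial tracking — not any single estimate — is the main obstacle.
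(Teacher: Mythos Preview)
Your approach is correct in outline, but it takes a genuinely different route from the paper. The paper's proof hinges on the second-order identity
\[
(-\Delta_{\SCB}-\CN^2)\varphi \;=\; \kappa\,\CN\varphi \;+\; 2N\cdot\nabla\Delta_0^{-1}\operatorname{div}(DN_\CH\,D\varphi_\CH)\;-\;\CN(N)\cdot(\CN(\varphi)N+\nabla^\top\varphi),
\]
which shows that $\CN^2+\Delta_\SCB$ is a first-order operator with coefficients built from $\p^2 h$. With this in hand, integer $q$ is immediate: one trades $\CN^2$ for $\Delta_\SCB$ repeatedly, and the error terms are manifestly lower order with the right structure. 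Odd $q$ follows by one further application of Lemma~\ref{harmonic0}, and half-integers $q=k+\tfrac12$ are handled by the bilinear identity $\|\CN^{q}\varphi\|_{L^2}^2=\int D^k\varphi\,\CN D^k\varphi+\text{l.o.t.}$ together with \eqref{eqh1/2}.

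Your argument, by contrast, never invokes the $\CN^2\approx-\Delta_\SCB$ identity; you induct in single steps using only Lemma~\ref{harmonic0} (to trade one $\CN$ for one tangential derivative) and the first-order commutator $[\tilde\p,\CN]$ from \eqref{comm-na}--\eqref{eqnd}. This is more elementary in its ingredients but, as you correctly note, generates a cascade of error terms of the form $\|\p h\|_{H^{q-j}}\|\nabla^{j+1}\varphi\|_{L^\infty}$ at each level of the induction, which must all be interpolated back down to the single admissible error $\|\p h\|_{H^{q-1}}\|\nabla\varphi\|_{L^\infty}$. That bookkeeping is doable (Gagliardo--Nirenberg plus Young, exactly as you sketch), but it is where the bulk of the work lies in your scheme. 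The paper's route sidesteps this: because $\CN^2+\Delta_\SCB$ is already first order, two steps collapse to one and the error terms never accumulate higher $L^\infty$ derivatives of $\varphi$.
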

\proof  From \cite{SZ06} we have
\begin{equation} \label{E:deltacn1}
(-\Delta_{\SCB} - \CN^2) \varphi = \kappa \CN(\varphi) +2N\cdot \nabla
\Delta_0^{-1}\operatorname{div} (D N_\CH  D \varphi_\CH) - \CN(N) \cdot (\CN(\varphi) N +
\nabla^\top \varphi).
\end{equation}
Thus the operator $\Delta_{\SCB} +  \CN^2$  is a first order operator with coefficients depending on second derivatives of $h$.  
Consequently  if $q$ is even \eqref{hqnorm}  are immediate.  If $q$ is odd then the equation \eqref{E:deltacn1}  and Lemma \ref{harmonic0} imply \eqref{hqnorm}.
For $q=k +\frac12$,  we use the commutator estimate \eqref{eqnd} to write
\[
\|\CN^{q}\varphi\|_{L^2}^2 = \int \CN^{k+1}\varphi\CN^k\varphi = \int D^k\varphi\CN D^k\varphi+ \, \text{lower order terms}.
\]
>From equation \eqref{eqh1/2} we obtain the desired result.
\endproof

\section{Properties of the Dirichlet-Neumann operator}

\label{appendixpotdno}

Recall that we denote $\mathcal{N}(h)$ or simply $\mathcal{N}$ for the Dirichlet-Neumann operator associated to the domain $\mathcal{D} = \{ z \leq h(x) \}$ with boundary 
$\mathscr{B} = \{ z = h(x) \}$.

\subsection{$L^\infty$ estimate}

Next we bound $\CN$ in $L^\8(\SCB) \cong L^\8(\RR^2)$  by allowing the loss of  small powers of $\Lambda = |\nabla|$ for small and large frequencies. 
\begin{proposition}\label{prop:elliptic}
There exists $\epsilon_0$ such that: if
$$
\|\p h \|_\infty + \| \nabla^3 h \|_\infty + \|\p h \|_2 + \| \nabla^3 h \|_2< \epsilon_0,
$$
then for any $\varphi$ defined on $\mathscr{B}$, and for $0<\sigma<1/2$,
\begin{align}\label{eq:1}
&\|\CN \varphi \|_{W^{2,\infty}(\RR^2)} \lesssim \|D \varphi\|_{W^{3,\infty}} +   \|\Lambda^{1-\sigma}\varphi\|_{L^\infty}.
\end{align}
\end{proposition}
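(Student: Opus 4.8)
The plan is to derive an integral/paradifferential representation for $\CN\varphi$ and then read off the $W^{2,\infty}$ bound by tracking frequency localizations, since a direct $L^\infty$ estimate on the Dirichlet--Neumann operator cannot be obtained by $L^2$-based elliptic theory alone. First I would write $\CN\varphi = \Lambda\varphi + (\CN - \Lambda)\varphi$, where $\Lambda\varphi$ is the flat Dirichlet--Neumann operator; the flat piece contributes $\|\Lambda\varphi\|_{W^{2,\infty}}$, which after a Littlewood--Paley split into low and high frequencies is controlled by $\|\Lambda^{1-\sigma}\varphi\|_{L^\infty}$ for the low frequencies (losing $\Lambda^\sigma$, $\sigma<1/2$) and by $\|D\varphi\|_{W^{3,\infty}}$ for the high frequencies (here the extra derivatives in the $W^{3,\infty}$ norm on the right give room). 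For the remainder $\CN-\Lambda$, I would use the harmonic extension and the change of variables $\Psi$ from Lemma~\ref{hreg} to flatten $\SD$ to $\RR^3_-$: writing $\phi = \varphi_\CH \circ \Psi$, the flattened equation is $\Delta\phi = 2\operatorname{div}(BD\phi) + c\cdot D\phi$ with $B,c$ small and depending on $\p\tilde h, \p^2\tilde h$ (exactly as in the proof of Proposition~\ref{elliptic}), and $\CN\varphi$ is a combination of $\nabla\phi|_{z=0}$ with coefficients built from $\p h$. Then $\CN\varphi - \Lambda\varphi$ is expressed, via $\phi = (\phi_b)_\CH + \Delta_*^{-1}(\text{forcing})$ on the half-space (the Poisson kernel plus the half-space Green's function for the lower order terms), as a sum of terms each of which carries at least one factor of $\p h$ or $\p^2 h$.

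The key step is then a fixed-point / Neumann-series argument in an $L^\infty$-type space for $\nabla\phi$: schematically $\nabla\phi = (\text{Poisson of }\varphi) + \mathcal{L}(\nabla\phi)$ where $\mathcal{L}$ involves convolution against $\nabla\Delta_*^{-1}\operatorname{div}$ or $\nabla\Delta_*^{-1}$ against $B\nabla\phi$, $c\cdot\nabla\phi$. Because $\|B\|, \|c\|$ and the relevant norms of $\p h, \p^2 h$ are below $\epsilon_0$, $\mathcal{L}$ is a contraction on the appropriate space (an $L^\infty$-based space on $\RR^3_-$ with a few derivatives, or better a frequency-weighted version allowing $\Lambda^{\pm\sigma}$ losses at the endpoints), so one solves for $\nabla\phi$ and the trace $\nabla\phi|_{z=0}$ inherits the bound. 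Each application of $\mathcal{L}$ produces a factor $\lesssim\epsilon_0$ times either $\|D\varphi\|_{W^{3,\infty}}$ (from the high-frequency/smooth part) or $\|\Lambda^{1-\sigma}\varphi\|_{L^\infty}$ (from the low frequencies, where one must be careful that $\nabla\Delta_*^{-1}\operatorname{div}$ acting on a product involving $\p h$ does not lose more than $\Lambda^\sigma$ at low frequency — this is where the hypothesis includes $\|\p h\|_2$ and $\|\nabla^3 h\|_2$, so that $\p h$ itself can be estimated in $L^\infty \cap L^2$ and placed so as not to degrade the low-frequency behavior). Summing the geometric series gives $\|\CN\varphi\|_{W^{2,\infty}} \lesssim \|D\varphi\|_{W^{3,\infty}} + \|\Lambda^{1-\sigma}\varphi\|_{L^\infty}$.

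The main obstacle I expect is the low-frequency bookkeeping in the remainder terms: operators like $N\cdot\nabla\Delta_0^{-1}\operatorname{div}(\,\cdot\,)$ are bounded on $L^2$-based spaces (Proposition~\ref{elliptic}) but in $L^\infty$ one must avoid the logarithmic/endpoint failure of Riesz-type transforms, which forces the $\Lambda^{\pm\sigma}$ losses and requires splitting every product into Littlewood--Paley pieces and carefully deciding, in each frequency regime, which factor carries derivatives and which carries the $\epsilon_0$-smallness. A secondary technical point is justifying that the flattening map $\Psi$ and the extension $\tilde h$ are compatible with $L^\infty$-based estimates and preserve the smallness quantitatively; this follows from \eqref{lippsi} together with the $L^\infty$ and $L^2$ smallness of $\p h, \nabla^3 h$ in the hypothesis, but it must be checked that the $\tilde h$ construction in Lemma~\ref{hreg}, which is stated with $H^s$ norms, also gives the needed $W^{k,\infty}$ control — which it does by Sobolev embedding once enough derivatives are present, consistent with the $\nabla^3 h$ appearing on the right-hand side.
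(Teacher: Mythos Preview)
Your approach is genuinely different from the paper's. The paper does \emph{not} flatten the domain or use the half-space elliptic machinery of Proposition~\ref{elliptic}; instead it represents $\varphi_\CH$ by a \emph{double layer potential} with density $\mu$, solves the resulting boundary integral equation $-\tfrac12\mu + K\mu = \varphi$ on $\RR^2$ by Neumann series in $L^\infty$ and $\dot{\mathcal C}^\alpha$ (the kernel $K(x,y)$ being explicitly small in these norms thanks to $\|\p h\|_\infty,\|\p h\|_2<\epsilon_0$), differentiates the integral equation to get $\dot{\mathcal C}^\alpha$ control on $\p_x\mu,\p_x^2\mu$, and then reads off $\CN\varphi$ from the normal derivative of the layer potential, handling the hypersingular piece by the identity $D^2G(N,N)=-\Delta_\SCB G-\kappa N\!\cdot\!\nabla G$. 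The H\"older framework is what makes the $L^\infty$ estimates go through cleanly; the $\Lambda^{1-\sigma}$ on the right of~\eqref{eq:1} corresponds to $\|\varphi\|_{\dot{\mathcal C}^{1-\sigma}}$ entering via the $L^\infty$ bound on $J\mu$.

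Your route could in principle be made to work, but as written it has a real gap at the contraction step. The operator $\nabla\Delta_*^{-1}\operatorname{div}$ on $\RR^3_-$ is a Calder\'on--Zygmund operator of order zero and is \emph{not} bounded on $L^\infty$; you acknowledge this, but the proposed remedy---a Neumann series in a ``frequency-weighted $L^\infty$'' space allowing $\Lambda^{\pm\sigma}$ losses---is not actually set up. On the half-space the vertical variable is not a Fourier variable (Dirichlet boundary), so Littlewood--Paley in the tangential frequencies alone does not tame the vertical part of $\nabla\Delta_*^{-1}\operatorname{div}$, and you would end up needing pointwise kernel bounds for the half-space Green's function---which is essentially the potential-theoretic input the paper uses from the start, only now buried inside a 3D problem rather than a 2D boundary equation. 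A second, smaller issue: the hypothesis here is weaker than~\eqref{A2} (only $\p h,\nabla^3 h\in L^\infty\cap L^2$ small), so your appeal to Lemma~\ref{hreg} and~\eqref{lippsi} for the flattening needs to be rechecked under these weaker assumptions; it survives by interpolation, but the $H^{2s+1/2}$ regularity of $\Psi$ is not available and you must verify the $W^{k,\infty}$ bounds on $D\Psi$ directly. If you want to salvage your strategy, the cleanest fix is to abandon the 3D contraction and instead run the contraction \emph{on the boundary}, e.g.\ via the multilinear expansion of Proposition~\ref{propexpansion} and $L^\infty$ bounds on each $M_n$; but at that point you are very close to the paper's layer-potential argument.
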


\noindent
\begin{proof} \underline{Step 1: the double layer potential} Let $a = (x, z) \in \SD$ and $b = (y, h(y) ) \in \SCB$.
Then $\varphi_{\CH}$ can be represented by the double layer potential
\[
\varphi_{\CH} (a) = \int\limits_\SCB \mu (b)N
\cdot \nabla G (a -b) dS (b)
= -1/2\mu (b_0) +  \int\limits_\SCB (\mu (b)- \mu (b_0) ) N
\cdot \nabla G (a -b) dS (b)
\]
where $G(a-b) = \frac 1{4\pi} |a-b|^{-1}$
is the Newtonian potential and $b_0$ is an arbitrary point on $\SCB$. 
Define $K (x,y) = \sqrt{1+|\p h(y)|^2}N(b) \cdot \nabla G (b_0-b)$
for $b_0= (x, h(x) ) \in \SCB$ and $b = (y, h(y) ) \in \SCB$. In other words,
$$
K (x,y) = \frac{{- \nabla} h(y) \cdot (x-y) + h(x) -h (y) }{ 4\pi (|x-y|^2 + (h(x) -h(y))^2) ^{\frac 32}}.
$$
Then as $a\to b_0$, it follows by standard singular integral calculations~\cite{Folland} that
\begin{equation}
\label{eq:e2}
-1/2 \mu (x) + \int \mu (y) K (x,y) dy = \varphi (x).
\end{equation}

\bigskip
\noindent \underline{Step 2: estimating $\mu$} Notice that $K(x,y)$ has  the following properties:
\begin{itemize}
\item $\int K(x,y) \,dy = 0$
\item $|K(x,y)| \lesssim \epsilon_0 \min \left[ \frac{1}{|x-y|} \,,\, \frac{1}{|x-y|^2} \right]$
\item $|K(x,y) - K(x',y)| \lesssim \epsilon_0 \frac{|x-x'|}{|y-x|^3}$ if $|y-x|>>|x-x'|$
\item $\int |K(x,y)|\,dy \lesssim \epsilon_0$.
\end{itemize}
The first point above can be seen by a standard integration by parts; the second and third points follow from $\|\p h\|_\infty < \epsilon_0$; for the fourth one use in
addition that $\|\p h\|_2 < \epsilon_0$. 

Next we also denote $K$ for the operator with kernel $K(x,y)$. By points one to three above, it is standard to see that
$$
\|K\|_{\dot{\mathcal{C}}^\alpha \rightarrow \dot{\mathcal{C}}^\alpha} \lesssim \epsilon_0 \quad \mbox{if $0<\alpha<1$};
$$
and point four easily implies
$$
\|K\|_{L^\infty \rightarrow L^\infty} \lesssim \epsilon_0.
$$
Choosing $\epsilon_0$ small enough, it is possible to solve~(\ref{eq:e2}) by Neumann series to obtain
$$
\|\mu\|_{L^\infty} \lesssim \|\varphi\|_{L^\infty} \quad \mbox{and} \quad \|\mu\|_{\dot{\mathcal{C}}^\alpha} \lesssim \|\varphi\|_{\dot{\mathcal{C}}^\alpha}
$$
To obtain estimates on derivatives of $\mu$, write
\begin{equation*}
 -1/2\mu (x)
+ \int( \mu (y)-\mu(x)) K (x,y)\, dy
= \varphi (x)  .
\end{equation*}
and differentiate with respect to $x$ to get
$$
-1/2\partial_x \mu (x) + \int (\mu (y)-\mu(x)) \partial_x K (x,y)\, dy = \partial_x \varphi (x).
$$ 
Denoting $J(x,y) = (\partial_x + \partial_y)K(x,y)$, an integration by parts gives (writing $J$ for the operator with kernel $J(x,y)$)
\begin{equation}
\label{equn}
-1/2\partial_x \mu (x) - K \partial_x \mu + J \mu = \partial_x \varphi.
\end{equation}
Using that $\|\nabla^2 h\|_\infty + \|\nabla^2 h\|_2 < \epsilon_0$, one checks that $J(x,y)$ enjoys the properties of $K(x,y)$ listed above. Thus if $0<\alpha<1$,
\begin{equation}
\label{eqdeux}
\|J\|_{L^\infty \rightarrow L^\infty} + \|J\|_{\dot{\mathcal{C}}^\alpha \rightarrow \dot{\mathcal{C}}^\alpha} \lesssim \epsilon_0.
\end{equation}
It is furthermore easy to see that
$$
\|J\|_{\mathcal{C}^{1-\sigma} \rightarrow L^\infty} + \|J\|_{\dot{\mathcal{C}}^{1-\sigma} \rightarrow L^\infty} + \lesssim \epsilon_0.
$$
Now equation~(\ref{equn}) implies
$$
\partial_x \mu (x) = \left(\frac{1}{2} + K \right)^{-1} \left( J \mu - \partial_x \varphi \right),
$$
from which follows, in conjunction with the various bound on $K$ and $J$, if $0<\alpha<1$,
\begin{equation*}
\begin{split}
& \left\| \partial_x \mu \right\|_\infty \lesssim \| J \mu \|_\infty + \| \partial_x \varphi \|_\infty \lesssim \|\varphi\|_{\dot{\mathcal{C}}^{1-\sigma}} + 
\| \partial_x \varphi \|_\infty \\
& \left\| \partial_x \mu \right\|_{\dot{\mathcal{C}}^{1-\sigma}} \lesssim \| J \mu \|_{\dot{\mathcal{C}}^{1-\sigma}} + \| \partial_x \varphi \|_{\dot{\mathcal{C}}^{1-\sigma}}
\lesssim \|\varphi\|_{\dot{\mathcal{C}}^{1-\sigma}} + \| \partial_x \varphi \|_\infty. \\
\end{split}
\end{equation*}
Taking one more derivative and arguing similarly gives
$$
\| \partial_x^2 \mu \|_{\infty} \lesssim \| \partial_x^2 \varphi \|_\infty + \|\varphi\|_{\dot{\mathcal{C}}^{1-\sigma}}.
$$

\bigskip
\noindent
\underline{Step 3: estimating $\CN\varphi$}
Fix a point $b_0\in \SCB$ and  use normal coordinates in a neighborhood of $\SCB$  to  restrict $a$ near the boundary to the line  $a = b_0 + \nu N (b_0 )$. Thus
\[
N(b_0)\cdot \nabla \varphi_{\CH} (a)
= \int\limits_\SCB (\mu(b) -\mu(b_0))
 D^2 G(N(b), N(b_0))(a-b) dS (b) .
\]
For $|b-b_0|$ large and $\nu$ small  $|D^2G(a-b)|\lesssim |b-b_0|^{-3}$ and thus the above integral can be bounded by $\|\mu \|_{\dot{\mathcal{C}}^{ 1- \sigma}} $.   
For $|b-b_0|$ small we write
\[
N(b_0) = \theta(b,b_0)N(b) + \gamma(b,b_0) \tau \qquad \text{where} \quad  \tau \in T_{b}S.
\]

The term involving $\tau$ is integrable due to the vanishing  of $\gamma(b_0,b_0)$.   By repeating the argument that led to inequality \eqref{eqdeux} we obtain

\[
| \int\limits_\SCB (\mu(b) -\mu(b_0))D^2 G(N(b),\gamma \tau)(b_0-b) dS (b) | \lesssim
\|\mu\|_{\dot{C}^{1-\sigma}} \lesssim \|\varphi\|_{\dot{C}^{1-\sigma}}.
\]

Split the remaining term
\[
I = \int\limits_\SCB (\mu(b) -\mu(b_0))D^2 G(N(b), N(b))(\nu N(b_0) +b_0-b) dS (b)
\]
with the help of a cutoff function $\chi$ which localizes smoothly away from the ball of center 0 and radius one:
$$
I = \int \left( \chi(b-b_0) + [1 - \chi(b-b_0)] \right) \dots dS(b) = II + III.
$$
The term $III$ is easily dealt with; as for $II$, it is hypersingular as $\nu \to 0$. Use the identity
\[
0 = \De G
= \De_{S} G
+ \kappa N
\cdot \nabla G
+ D^2 G (N, N)
\]
for $\nu < 0$ to re-express $II$ as
\begin{equation*}
\begin{split}
II  = & \int\limits_{\mathscr{B}} \CD \left( \chi(b-b_0) (\mu(b)-\mu(b_0)) \right) \CD G(\nu N(b_0) +b_0-b) \\
& \qquad \qquad \qquad \qquad - \chi(b-b_0) (\mu(b) -\mu(b_0)) \kappa N(b) \cdot \nabla G(\nu N(b_0) +b_0-b) \, dS(b).
\end{split}
\end{equation*}
Letting now $\nu \to 0$, this can be bounded as before
\[
\| I \| \lesssim  \| \p \mu \|_{C^1}
+ \| \mu  \|_{C^{1-\sigma} } \lesssim \| \p \varphi \|_{C^{1}}
+ \| \varphi \|_{\dot {C}^{1-\sigma}  }.
\]
Thus
\[
\|  \CN \varphi \|_{L^\infty(S) }
\le \| \p \varphi \|_{C^1 }
+ \| \varphi \|_{\dot{C}^{1-\sigma} }.
\]
By repeating the above argument after applying
tangential derivatives to $N(b_0)\cdot\nabla \varphi$ we obtain
\[
\|  \CN \varphi \|_{W^{2,\infty}(S) }
\le \| \p \varphi \|_{C^3 }
+ \| \varphi \|_{\dot{C}^{1-\sigma} }.
\]
This proves inequality \eqref{eq:1}. 
\end{proof}

\subsection{Expansion into multilinear $L^2$ bounded operators}

\begin{prop}
\label{propexpansion}
The operator $\mathcal{N}(h)$ can be expanded into a series of multilinear operators
$$
\mathcal{N}(h) = \sum_{n \geq 0} M_n (h,\dots,h,\cdot) \quad \mbox{i.e.} \quad \mathcal{N}(h)f = \sum_{n \geq 0} M_n (h,\dots,h,f)
$$
where the operators $M_n$ are symmetric, $n-linear$ in $h$, and satisfy the estimate
\begin{equation} 
\label{multilinbounds}
\left\| M_n(h,\dots,h,f) \right\|_2 \leq C_*^n 
  \left\|\p h \right\|_\infty^n \|\nabla f\|_2.
\end{equation}
\end{prop}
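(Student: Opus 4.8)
The plan is to exploit the fact that $v := \nabla f_\CH$ is divergence- and curl-free in $\SD$, so that by elliptic (div--curl) theory its boundary trace $v|_\SCB$ is reconstructed from its \emph{tangential} part by an order-zero singular integral operator built from layer potentials. Since the tangential part of $v$ on $\SCB$ is the surface gradient of $f$, this realizes $\CN(h)f = N\cdot v|_\SCB$ in the form $\CN(h)f = T(h)\nabla f$, with $T(h)$ an order-zero operator whose kernel is an explicit rational function of $h(x)-h(y)$, $\nabla h$ and $|x-y|$. Expanding that kernel in powers of $h$ produces the $M_n$, and~\eqref{multilinbounds} follows from the $L^2$ theory of the resulting higher-dimensional Calder\'on commutators.

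\textbf{Step 1 (reduction to an order-zero operator acting on $\nabla f$).} First I would represent $f_\CH$ by the double layer potential as in the proof of Proposition~\ref{prop:elliptic}, reducing the Dirichlet problem to $(-\tfrac12+K)\mu=f$ with
\[
K(x,y)=\frac{-\nabla h(y)\cdot(x-y)+h(x)-h(y)}{4\pi\bigl(|x-y|^2+(h(x)-h(y))^2\bigr)^{3/2}},
\]
and recover $\nabla f_\CH|_\SCB$, hence $\CN(h)f = N\cdot\nabla f_\CH|_\SCB$, by applying to $\mu$ a further singular integral operator coming from $D^2G$. More transparently, I would use the div--curl structure: the tangential part of $\nabla f_\CH$ on $\SCB$ equals $\nabla^\top f$ (a combination of $\nabla f$ with corrections involving $\nabla h\cdot\nabla f$), and $\nabla f_\CH|_\SCB$ is reconstructed from it by the geometric "Cauchy-type" operator $\Lambda^{-1}\nabla\cdot$ conjugated by the layer-potential parametrization of $\SD$; for $h\equiv0$ this gives exactly $\CN(0)f=-R\cdot\nabla f=\Lambda f$, with $R$ the Riesz transform, so $M_0=\Lambda$ and $\|M_0 f\|_2=\|\nabla f\|_2$. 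Either way one gets $\CN(h)f=T(h)\nabla f$ with $T(h)$ of order zero. Convergence of these integrals at spatial infinity uses $h\in L^p$, which follows from~\eqref{A2}; near-diagonal integrability uses the Taylor cancellation $-\nabla h(y)\cdot(x-y)+h(x)-h(y)=O(|x-y|^2)$.

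\textbf{Step 2 (multilinear expansion and $L^2$ bounds).} Next I would expand $T(h)$ in powers of $h$: write the denominators as geometric series $\bigl(|x-y|^2+(h(x)-h(y))^2\bigr)^{-3/2}=|x-y|^{-3}\sum_{k\ge0}c_k\bigl(\tfrac{h(x)-h(y)}{|x-y|}\bigr)^{2k}$ and split the numerators, so that $T(h)=\sum_{n\ge0}\widetilde M_n(h,\dots,h,\cdot)$ where $\widetilde M_n$ depends $n$-linearly on $h$ through the difference quotient $\tfrac{h(x)-h(y)}{|x-y|}$, i.e. $\widetilde M_n$ is a multidimensional Calder\'on commutator of order $0$. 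By the Coifman--McIntosh--Meyer theory of such operators (trading, after an integration by parts, each difference quotient for a factor of $\partial h$), one has $\|\widetilde M_n(h,\dots,h,\cdot)\|_{L^2\to L^2}\le C_*^n\|\partial h\|_\infty^n$ with $C_*$ independent of $n$ and of $h$; composing with $\nabla$ on the right gives $\|\widetilde M_n(h,\dots,h,\nabla f)\|_2\le C_*^n\|\partial h\|_\infty^n\|\nabla f\|_2$. The series converges in operator norm since $\|\partial h\|_\infty$ is small. Finally, replace $\widetilde M_n$ by its symmetrization $M_n$ over the $n$ copies of $h$; this leaves the diagonal values unchanged and preserves the bound, which is~\eqref{multilinbounds}. (The same expansion applied to $\sqrt{1+|\partial h|^2}$ followed by a product expansion gives the analogous statement for $G(h)$ used in Section~\ref{remainderterm}.)

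\textbf{Main obstacle.} The heart of the matter is the $L^2$ bound on $\widetilde M_n$ with a constant growing only geometrically in $n$ and depending on $h$ only through $\|\partial h\|_\infty$: this is the multidimensional Calder\'on commutator / Cauchy integral estimate of Coifman--McIntosh--Meyer type, whose proof needs a uniform $T1$/$Tb$-type verification together with careful control of the $n$-dependence. The secondary difficulty is organizational: one must arrange Step~1 so that $\CN(h)f$ genuinely lands in the form "order-zero Calder\'on--Zygmund operator, with coefficients built from $n$ factors of $\partial h\in L^\infty$, applied to $\nabla f$" — with no derivative falling on $f$ itself or on a lone factor of $h$ — which is precisely what the div--curl reconstruction of $v|_\SCB$ from its tangential part guarantees, and what forces the precise shape of~\eqref{multilinbounds}.
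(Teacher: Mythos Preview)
Your proposal is correct in its essential mechanism --- expand the layer-potential kernels as series in the difference quotient $\tfrac{h(x)-h(y)}{|x-y|}$ and invoke the Coifman--McIntosh--Meyer $L^2$ bounds with geometric growth in $n$ --- and this is exactly what the paper does. The one genuine difference is in Step~1: the paper uses the \emph{single}-layer potential rather than the double-layer. Writing $f(x)=\int \rho(y)\bigl(|x-y|^2+(h(x)-h(y))^2\bigr)^{-1/2}\,dy$ and applying $\Lambda = R\cdot\nabla$ to this identity yields directly $\Lambda f = \rho + \sum_{n\ge 1} P_n\rho$ with each $P_n$ a Calder\'on commutator of order zero; a Neumann-series inversion then gives $\rho$ as a sum of such operators acting on $\Lambda f$, and the representation of $\CN(h)f$ through $\rho$ (via the jump formula for the single layer) immediately has the form ``order-zero CZO applied to $\Lambda f$''. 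This sidesteps precisely the ``secondary difficulty'' you flag: with the double-layer approach the density $\mu$ solves $(-\tfrac12+K)\mu=f$, so $\mu$ depends on $f$ itself, and one must then argue separately (your div--curl reconstruction) that the final answer factors through $\nabla f$. The single-layer route makes the $\|\nabla f\|_2$ dependence automatic and the bookkeeping shorter; your route would also work but requires the extra organizational step you identify.
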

\begin{proof} Though we could not find this explicit statement, this result is classical in harmonic analysis.
The idea is to expand the Dirichlet-Neumann operator into elementary operators akin to Calderon's commutators.

\noindent \underline{Step 1: The single-layer potential}
Let $\psi$ be the solution of the Dirichlet problem with boundary data $f$:
$$
\Delta \psi = 0 \quad \mbox{in $\mathcal{D}$} \quad \mbox{and} \quad \psi(x,h(x)) = f(x) \quad \mbox{on $\mathscr{B}$}.
$$
It can be represented via a single layer potential by
\begin{equation*}
\psi(x,z) = \int_{\mathbb{R}^2} \frac{\rho(y)}{\left(|x-y|^2+|h(y)-z|^2\right)^{1/2}} dy
\end{equation*}
(this is not quite the standard definition, where the weight $\sqrt{1+|\p h|^2}$ would appear, reflecting the volume element for $\mathscr{B}$).
In particular,
\begin{equation}
\label{slp}
f(x) = \int_{\mathbb{R}^2} \frac{\rho(y)}{\left(|x-y|^2+|h(x)-h(y)|^2\right)^{1/2}} dy.
\end{equation}
It is well-known (see Folland~\cite{Folland}) that 
\begin{equation}
\label{lim}
\begin{split}
\mathcal{N}(h)\psi (x) & = \operatorname{lim}_{z \rightarrow h(x)} N_x \cdot \nabla \psi(z,h(x)) \\
& = \frac{1}{2} \frac{\rho(x)}{\sqrt{1+|\p h|^2(x)}} + \int_{\mathbb{R}^2} \frac{h(x) - h(y) + \p h(x) \cdot (x-y)}{\left( |x-y|^2 + |h(x) - h(y)|^2\right)^{3/2}} \rho(y) \,dy .
\end{split}
\end{equation}

\bigskip
\noindent \underline{Step 2: expanding $\rho$}
Start by expanding~(\ref{slp}) in $h$:
$$
f(x) = \sum_{n=0}^\infty \beta_n \int_{\mathbb{R}^2} \frac{|h(x)-h(y)|^{2n}}{|x-y|^{2n+1}}\rho(y)\,dy,
$$
where $|\beta_n|\leq 1$. Apply $\Lambda$ to the above, noting that $\Lambda = \frac{D}{\Lambda}\cdot D = \sum_{j=1,2} R_j \partial_j$. This gives
\begin{equation*}
\begin{split}
\Lambda f(x) & = \rho(x) + \sum_{n=1}^\infty \sum_{j=1,2} R_j \int_{\mathbb{R}^2} \frac{(h(x)-h(y))^{2n-1}}{|x-y|^{2n+1}} \\
& \qquad \qquad \qquad \qquad \left[2n \partial_j h(x)-(2n+1)\frac{(x^j-y^j)}{|x-y|}\frac{(h(x)-h(y))}{|x-y|}\right] \rho(y)\,dy \\
& \overset{def}{=} \rho(x) + \sum_{n=1}^\infty \left[ P_n \rho \right] (x).
\end{split}
\end{equation*}
Inverting the above by Neumann series gives
\begin{equation}
\label{toureiffel}
\rho(x) = \sum_{k=0}^{\infty} (-1)^k \left[ \sum_{n=0}^\infty P_n \right]^k \Lambda f(x).
\end{equation}

\bigskip
\noindent \underline{Step 3: expanding $\mathcal{N}(h) f$} 
The right-hand side of~(\ref{lim}) can be expanded in $h$ to give
\begin{equation}
\label{champselysees}
\begin{split}
\left[ \mathcal{N}(h) \psi \right] (x) & =  \frac{1}{2} \frac{\rho(x)}{\sqrt{1+|\p h|^2(x)}} + 
\sum_{n\geq 0} \alpha_n \frac{h(x)-h(y) + \p h(x) \cdot (x-y)}{|x-y|^3} \frac{|h(x)-h(y)|^{2n}}{|x-y|^{2n}} \rho(y) \,dy, \\
& \overset{def}{=}  \frac{1}{2} \frac{\rho(x)}{\sqrt{1+|\p h|^2(x)}} + \sum_{n=0}^\infty \left[ Q_n \rho \right] (x)
\end{split}
\end{equation}
where $|\alpha_n| \leq n$.

\bigskip
\noindent \underline{Step 4: boundedness of the elementary operators and conclusion}
The elementary operators $P_n$ and $Q_n$ appearing respectively in~(\ref{toureiffel}) and~(\ref{champselysees}) can be estimated by the following result 
of Coifman, MacIntosh, and Meyer~\cite{CMIM}
\footnote{This result corresponds to Theorem III in~\cite{CMIM}, transfered on $\mathbb{R}^2$ by the method of rotations.}: the operator on $\mathbb{R}^2$ with kernel
$$
\frac{(h_1(x)-h_1(y))\dots (h_n(x)-h_n(y))}{|x-y|^{n+2}} 
$$
(and $n$ odd) has a norm on $L^2$ which is bounded by
$$
C(1+n^4) \|\p h_1 \|_\infty \dots \| \p h_n \|_\infty.
$$
Using this estimate in conjunction with~(\ref{toureiffel}) and~(\ref{champselysees}) gives the desired conclusion.
\end{proof}

\subsection{Symmetries}

\begin{lem}
\label{lemmasymmetries}
The Dirichlet-Neumann operator is invariant by the following symmetries:
\begin{itemize}
\item Translation: $G(h(\cdot+\delta)) \left[ f(\cdot + \delta) \right] = \left[ G(h) f \right] (\cdot + \delta)$ for $\delta \in \mathbb{R}^2$.
\item Rotation: $G(h (R_\theta \cdot) ) \left[ f(R_\theta \cdot) \right] = \left[ G(h) f \right] (R_\theta \cdot)$ with $\theta \in \mathbb{R}$, where $R_\theta$ is the rotation by an angle~$\theta$.
\item Dilation: $G \left( \frac{1}{\lambda} h(\lambda \cdot) \right) \left[ f(\lambda \cdot) \right] = \lambda \left[ G(h) f \right] (\lambda \cdot)$ with $\lambda > 0$
\end{itemize}
\end{lem}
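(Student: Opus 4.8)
The plan is to deduce all three identities from a single observation: each symmetry of $h$ is induced by a rigid motion or a uniform dilation $\Phi$ of $\mathbb{R}^3$ that carries the transformed fluid domain onto $\mathcal{D}=\{z\le h(x)\}$, and the Dirichlet--Neumann operator transforms covariantly under such maps because it is assembled from two ingredients — solving $\Delta f_{\mathcal{H}}=0$ and taking a normal derivative — each of which does. Throughout I would use $G(h)f=\sqrt{1+|\p h|^2}\,N\cdot\nabla f_{\mathcal{H}}$ and reduce, in each case, to an identity of the form $\mathcal{N}(\tilde h)[f\circ\pi]=c\,[\mathcal{N}(h)f]\circ\pi$ for the appropriate horizontal map $\pi$ and constant $c$, noting that the prefactor $\sqrt{1+|\p h|^2}$ transforms correctly on its own once one checks that $|\p\tilde h|$ equals $|\p h|$ composed with $\pi$ (up to an orthogonal matrix, which is harmless for the norm).

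First I would treat translation and rotation together. For translation take $\Phi(x,z)=(x+\delta,z)$ and $\tilde h(x)=h(x+\delta)$, and for rotation take $\Phi(x,z)=(R_\theta x,z)$ and $\tilde h(x)=h(R_\theta x)$; in each case $\Phi$ is an isometry of $\mathbb{R}^3$ mapping $\mathcal{D}_{\tilde h}$ onto $\mathcal{D}_h$. If $f_{\mathcal{H}}$ is the harmonic extension of $f$ to $\mathcal{D}_h$, then $f_{\mathcal{H}}\circ\Phi$ is harmonic on $\mathcal{D}_{\tilde h}$ with boundary trace $f(\cdot+\delta)$ (resp. $f(R_\theta\cdot)$), so by uniqueness of the finite-energy harmonic extension (the class set up in Appendix~\ref{appendixtaee}, Proposition~\ref{elliptic}) one gets $[f(\cdot+\delta)]_{\mathcal{H}}=f_{\mathcal{H}}\circ\Phi$ (resp. $[f(R_\theta\cdot)]_{\mathcal{H}}=f_{\mathcal{H}}\circ\Phi$). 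Since the differential of $\Phi$ is orthogonal and block-diagonal, it acts the same way on the unit normal $N$ and on $\nabla$, so $N\cdot\nabla$ is invariant and $\mathcal{N}(\tilde h)[f(\cdot+\delta)]=[\mathcal{N}(h)f](\cdot+\delta)$ (resp. with $R_\theta$). Multiplying by $\sqrt{1+|\p\tilde h|^2}$ and using $|\p\tilde h(x)|=|\p h(x+\delta)|$ (resp. $|\p(h\circ R_\theta)(x)|=|R_\theta^{-1}(\p h)(R_\theta x)|=|\p h(R_\theta x)|$) gives the two stated identities.

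For the dilation I would take $\tilde h(x)=\tfrac1\lambda h(\lambda x)$ and $\Phi(x,z)=(\lambda x,\lambda z)$; then $\{z\le\tilde h(x)\}=\{\lambda z\le h(\lambda x)\}$ shows $\Phi(\mathcal{D}_{\tilde h})=\mathcal{D}_h$. Composition with the uniform scaling $\Phi$ preserves harmonicity ($\Delta(g\circ\Phi)=\lambda^2(\Delta g)\circ\Phi$), and $f_{\mathcal{H}}\circ\Phi$ has boundary trace $x\mapsto f(\lambda x)$, so uniqueness again gives $[f(\lambda\cdot)]_{\mathcal{H}}=f_{\mathcal{H}}\circ\Phi$. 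Here the chain rule yields $\nabla(f_{\mathcal{H}}\circ\Phi)=\lambda\,(\nabla f_{\mathcal{H}})\circ\Phi$ while the unit normal is unchanged (a dilation preserves directions), hence $\mathcal{N}(\tilde h)[f(\lambda\cdot)](x)=\lambda\,[\mathcal{N}(h)f](\lambda x)$; combined with $\p\tilde h(x)=\p h(\lambda x)$ this produces $G(\tilde h)[f(\lambda\cdot)](x)=\lambda\,[G(h)f](\lambda x)$.

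The one place needing genuine care is the uniqueness step, since $\mathcal{D}$ is unbounded: one must fix the function class in which the harmonic extension is unique (finite energy, as in Appendix~\ref{appendixtaee}) and verify that pulling back by $\Phi$ keeps the extension in that class, which is immediate because $\Phi$ and $\Phi^{-1}$ are bi-Lipschitz with controlled constants. If one wishes to sidestep this entirely, I would instead argue directly on the single-layer representation: writing $f$ through~\eqref{slp} and $\mathcal{N}(h)f$ through~\eqref{lim}, each symmetry of $h$ induces the change of variables $y\mapsto y+\delta$, $y\mapsto R_\theta y$, or $y\mapsto y/\lambda$ in the integral equation determining the density $\rho$ and then in the formula for $\mathcal{N}(h)f$; tracking the Jacobians and the isometry property $|R_\theta x-R_\theta y|=|x-y|$ reproduces all three identities after multiplying by $\sqrt{1+|\p h|^2}$. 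I expect this second route to be the quickest to write out in complete detail, and would likely present it in place of the uniqueness argument.
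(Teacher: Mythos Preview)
The paper states this lemma without proof, treating it as self-evident from the geometric definition of $G(h)$. Your argument is correct and is exactly the natural justification: pull back by the rigid motion or dilation of $\mathbb{R}^3$, use that harmonicity and the outward unit normal transform covariantly, and read off the factor $\lambda$ from the chain rule in the dilation case; the alternative route via the single-layer formulas \eqref{slp}--\eqref{lim} would work equally well.
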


Consider the expansion of Proposition~\ref{propexpansion}. Applying for instance the rotation symmetry, and using in conjunction the above lemma gives
$$
\left[ G(h) f \right] (R_\theta \cdot)  = \sum_{n \geq 0} M_n
 (h(R_\theta \cdot) ,\dots,h(R_\theta \cdot) , f(R_\theta \cdot) )
$$
Differentiating in the continuous parameter ($\theta$ in the above example, or $\lambda$ or $\delta$ if the other symmetries are used), we obtain that
$$
\Gamma \left[ G(h) f \right] = \sum_{n \geq 0} n M_n
 (\Gamma h,\dots,h,f) + M_n(h,\dots,h,\Gamma f) 
$$
for $\Gamma = \nabla, \Omega$; and
\begin{equation}
\label{leibniz}
\Gamma \left[ G(h) f \right] = \sum_{n \geq 0} n M_n (\Gamma h,\dots,h,f) - (n+1) 
 M_n(h,\dots,h,\Gamma f) + G(h) f.
\end{equation}
for $\Gamma = x\cdot \nabla$. This last formula remains valid for $\mathcal{S} = \frac{3}{2} t \partial_t + x \cdot \nabla$ if the functions under consideration depend on time. 

Analogous formulas can of course be obtained if more than one vector field $\Gamma$ is applied.

\section{Tools from linear and multilinear harmonic analysis}

\label{appendixbobo}

\subsection{Littlewood-Paley theory}

Consider $\theta$ a function supported in the annulus $\mathcal{C}(0,\frac{3}{4},\frac{8}{3})$ such that
$$
\mbox{for $\xi \neq 0$,}\quad\sum_{j \in \mathbb{Z}} \theta \left( \frac{\xi}{2^j} \right) = 1 .
$$
Also define
$$
\Theta(\xi) \overset{def}{=} \sum_{j <0} \theta \left( \frac{\xi}{2^j} \right) .
$$
Define then the Fourier multipliers
$$
P_j \overset{def}{=} \theta\left( \frac{D}{2^j} \right) \quad P_{<j} = \Theta \left( \frac{D}{2^j} \right)\quad P_{\geq j} = 1- \Theta\left( \frac{D}{2^j} \right)
$$
and similarly $P_{\leq j}$, $P_{>j}$.
This gives a homogeneous and an inhomogeneous decomposition of the identity (for instance, in $L^2$)
$$
\sum_{j \in \mathbb{Z}} P_j = \operatorname{Id} \quad\mbox{and}\quad P_{\leq 0} + \sum_{j> 0} P_j = \operatorname{Id}.
$$
All these operators are bounded on $L^p$ spaces:
$$
\mbox{if $1 \leq p \leq \infty$,}\quad \|P_j f \|_p \lesssim \|f\|_p \quad,\quad \|P_{<j} f \|_p \lesssim \|f\|_p\quad\mbox{and} \quad\|P_{>j} f \|_p \lesssim \|f\|_p.
$$
Furthermore, for $P_j f$, taking a derivative is essentially equivalent to multiplying by $2^j$:
\begin{equation}
\label{LPderivative}
\begin{split} 
& \mbox{if $1 \leq p \leq \infty$ and $\alpha \in \mathbb{R}$,}\quad \|\Lambda^\alpha P_j f \|_p \sim 2^{\alpha j} \|P_j f\|_p\\
& \mbox{if $1 \leq p \leq \infty$ and $\ell \in \mathbb{Z}$,} \quad \|\nabla^\ell P_j f \|_p \sim 2^{\ell j} \|P_j f\|_p.
\end{split}
\end{equation}
Also, we  recall Bernstein's lemma: if $1\leq q\leq p \leq \infty$,
\begin{equation}
\label{lemmadeltaj}
\|P_j f \|_p \leq 2^{2j\left( \frac{1}{q}-\frac{1}{p} \right)} \left\| P_j f \right\|_q\quad\mbox{and}\quad \left\| P_{<j} f \right\|_p \leq 2^{2j\left( \frac{1}{q}-\frac{1}{p} \right)} \left\| P_{<j} f \right\|_q .
\end{equation}

\subsection{Boundedness of bilinear operators}

\label{appendixbilin}

Recall the definition of the bilinear operator with symbol $m$:
$$
T_m(f,g)(x) \overset{def}{=} \int_{\mathbb{R}^2} e^{ix \xi} \widehat{f}(\eta) \widehat{g}(\xi-\eta) m(\xi,\eta)\, d\xi d\eta.
$$
These operators are called pseudo-products and were introduced by Coifman and Meyer~\cite{CM}.

\begin{df}
Let $\mathcal{M}^{\beta,c_1,c_2,c_3}$ denote the set of bilinear symbols $m(\xi,\eta)$ such that
\begin{itemize}
\item $m$ is homogeneous of degree $\beta$: $m(\lambda \xi,\lambda \eta) = \lambda^\beta  m(\xi,\eta)$.
\item $m$ is smooth away from $\{\xi=0\} \cup \{\eta = 0\} \cup \{ \xi-\eta=0 \}$.
\item  If $|\xi|\ll|\eta| \sim 1 $, $m(\xi,\eta)$ can be written under the form $|\xi|^{c_1} \mathcal{A}\left( |\xi|,\frac{\xi}{|\xi|},\eta \right)$, 
where $\mathcal{A}$ is smooth in its arguments. 
\item If $|\eta|\ll|\xi| \sim 1$, $m(\xi,\eta)$ can be written under the form $|\eta|^{c_2} \mathcal{A}\left( |\eta|,\frac{\eta}{|\eta|} ,\xi \right)$, 
where $\mathcal{A}$ is smooth in its arguments.
\item  If $|\xi-\eta|\ll|\eta| \sim 1$, $m(\xi,\eta)$ can be written under the form $|\xi-\eta|^{c_3} \mathcal{A}\left( |\xi-\eta|,\frac{\xi-\eta}{|\xi-\eta|},\eta \right)$, 
where $\mathcal{A}$ is smooth in its arguments.
\end{itemize}
\end{df}

\begin{prop} \label{Bili-prop}
Let $\psi(\xi,\eta)$ be a smooth function, supported on an annulus, and let $m \in \mathcal{M}^{\beta,c_1,c_2,c_3}$, with $c_1,c_2,c_3>0$. 
Finally set
$$ 
\mu = \psi \left( \frac{(\xi,\eta)}{2^j} \right) m(\xi,\eta).
$$
Then
\begin{equation}\label{bili-est}
\left\| T_{\mu}(f,g) \right\|_p \lesssim 2^{\beta  j} \|f\|_q \|g\|_r
\end{equation}
if $1 \leq p,q,r \leq \infty$ and $\frac{1}{q} + \frac{1}{r} = \frac{1}{p}$.
\end{prop}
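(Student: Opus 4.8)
The plan is to reduce the statement to an $L^1$ kernel bound and then run a standard Coifman--Meyer type argument, the only nonroutine point being the integrable singularities of $m$ on $\{\xi=0\}\cup\{\eta=0\}\cup\{\xi=\eta\}$. First I would exploit the homogeneity of degree $\beta$: the substitution $\xi\mapsto 2^j\xi$, $\eta\mapsto 2^j\eta$ reduces matters to $j=0$ at the cost of the claimed factor $2^{\beta j}$, so it suffices to bound $T_{\mu_0}$ with $\mu_0=\psi(\xi,\eta)m(\xi,\eta)$ supported on a fixed annulus $|(\xi,\eta)|\sim 1$. Writing $\zeta=\xi-\eta$ and taking the inverse Fourier transform of $\mu_0(\eta+\zeta,\eta)$ in the variables $(\eta,\zeta)$ produces a kernel $K(y,z)$ on $\mathbb{R}^2\times\mathbb{R}^2$ with
\[
T_{\mu_0}(f,g)(x)=\int\!\!\int K(y,z)\,f(x-y)\,g(x-z)\,dy\,dz,
\]
and Minkowski's inequality followed by H\"older's inequality (using $\tfrac1q+\tfrac1r=\tfrac1p$) gives $\|T_{\mu_0}(f,g)\|_p\le \|K\|_{L^1(\mathbb{R}^4)}\|f\|_q\|g\|_r$. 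Thus the whole proposition reduces to showing $\|K\|_{L^1}\lesssim 1$ uniformly.

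The key geometric observation is that on the annulus $|(\xi,\eta)|\sim 1$ at most one of $|\xi|$, $|\eta|$, $|\xi-\eta|$ can be small: if $|\xi|\ll 1$ then necessarily $|\eta|\sim 1$ and $|\xi-\eta|\sim 1$, and likewise for the other two. Using a partition of unity adapted to this, I would write
\[
\mu_0=\mu^{\mathrm{gen}}+\sum_{k\le 0}\bigl(\mu^{(1)}_k+\mu^{(2)}_k+\mu^{(3)}_k\bigr),
\]
where $\mu^{\mathrm{gen}}$ is supported where all three of $|\xi|,|\eta|,|\xi-\eta|$ are comparable to $1$, while $\mu^{(1)}_k$ is supported where $|\xi|\sim 2^k$ and $|\eta|,|\xi-\eta|\sim 1$, and symmetrically $\mu^{(2)}_k$ in $|\eta|\sim 2^k$ and $\mu^{(3)}_k$ in $|\xi-\eta|\sim 2^k$. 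Since $m$ is smooth away from the three singular sets, $\mu^{\mathrm{gen}}$ is a smooth compactly supported function on $\mathbb{R}^4$, so its inverse Fourier transform is Schwartz, in particular $L^1$, and contributes a finite amount to $\|K\|_{L^1}$.

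For the singular pieces, consider $\mu^{(1)}_k$. On its support the structural hypothesis defining $\mathcal{M}^{\beta,c_1,c_2,c_3}$ gives $m(\xi,\eta)=|\xi|^{c_1}\mathcal{A}(|\xi|,\xi/|\xi|,\eta)$ with $\mathcal{A}$ smooth, whence $|\partial_\xi^a\partial_\eta^b\mu^{(1)}_k|\lesssim 2^{c_1 k}2^{-ka}$ for all $a,b$. Rescaling the $\xi$-variable, $\tilde\mu_k(\xi,\eta):=\mu^{(1)}_k(2^k\xi,\eta)$ is supported in a fixed box with $\|\tilde\mu_k\|_{C^N}\lesssim 2^{c_1 k}$ uniformly in $k$, so $\|\mathcal{F}^{-1}\tilde\mu_k\|_{L^1}\lesssim 2^{c_1 k}$; as rescaling the symbol in $\xi$ by $2^k$ leaves the $L^1$-norm of the corresponding kernel invariant, the piece $\mu^{(1)}_k$ contributes $\lesssim 2^{c_1 k}$ to $\|K\|_{L^1}$, and the sum over $k\le 0$ converges precisely because $c_1>0$. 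The pieces $\mu^{(2)}_k$ and $\mu^{(3)}_k$ are handled in the same way using $c_2>0$ and $c_3>0$ (for $\mu^{(3)}_k$ one rescales the $\zeta=\xi-\eta$ variable instead). Altogether $\|K\|_{L^1}\lesssim 1+\sum_{k\le 0}(2^{c_1 k}+2^{c_2 k}+2^{c_3 k})\lesssim 1$, which completes the argument.

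The step I expect to be the main obstacle is the last one: extracting the uniform symbol estimates $|\partial^\alpha\mu^{(i)}_k|\lesssim 2^{c_i k}2^{-k|\alpha_\xi|}$ from the ``$|\xi|^{c_i}\mathcal{A}$'' form of $m$. The factor $\mathcal{A}(|\xi|,\xi/|\xi|,\eta)$ is not smooth in $\xi$ at the origin, but it is uniformly smooth on each dyadic shell $|\xi|\sim 2^k$ after the rescaling $\xi\mapsto 2^k\xi$, which is exactly what the kernel estimate needs; checking this, together with the summability over $k$ that forces the hypotheses $c_1,c_2,c_3>0$ to be used in an essential way, is the technical heart of the proof. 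Everything else is the classical reduction of a Coifman--Meyer pseudo-product multiplier to an $L^1$ kernel operator.
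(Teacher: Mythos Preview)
Your proof is correct, and it takes a genuinely different route from the paper. Both arguments begin with the same scaling reduction to $j=0$ and the same partition into a ``generic'' smooth piece plus three singular regions near $\xi=0$, $\eta=0$, $\xi-\eta=0$. From there the methods diverge. You show directly that the bilinear kernel lies in $L^1(\mathbb{R}^4)$: using that $\|K\|_{L^1}=\|\mathcal{F}^{-1}_{(\xi,\eta)}\mu_0\|_{L^1}$ (via the change of variables $(u,v)\mapsto(v,u-v)$), you dyadically localize each singularity, rescale the singular variable, and use that a compactly supported symbol with $C^N$ norm $\lesssim 2^{c_ik}$ has inverse Fourier transform with $L^1$ norm $\lesssim 2^{c_ik}$; summability over $k\le 0$ then uses $c_i>0$. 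The paper instead treats the singular region (say $|\eta|\ll|\xi|\sim 1$) by expanding $\mathcal{A}(|\eta|,\eta/|\eta|,\xi)$ in a Taylor series in $|\eta|$ and a Fourier series in $\eta/|\eta|$, which separates the symbol into tensor products $|\eta|^{c+\ell}e^{ik\eta/|\eta|}\mathcal{A}_{k\ell}(\xi)$; the remainder is handled by Coifman--Meyer, and each separated term is bounded via the linear multiplier estimate $\|\Lambda^c e^{ikD/\Lambda}P_m\|_{L^p\to L^p}\lesssim 2^{mc}k^C$, summing over $m<0$ (using $c>0$) and over $k$ (using the rapid decay of the Fourier coefficients).

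Your approach is more self-contained and elementary: it avoids invoking the Coifman--Meyer theorem and the linear multiplier bounds for $e^{ikD/\Lambda}$, at the price of checking the symbol estimates after rescaling (which you correctly identify as the crux, and which does go through since $\mathcal{A}(2^k|\xi|,\xi/|\xi|,\eta)$ has uniformly bounded derivatives on $|\xi|\sim 1$). The paper's approach is more modular, delegating the analytic work to standard black boxes. Both exploit $c_1,c_2,c_3>0$ in exactly the same way, as the condition guaranteeing summability of the dyadic pieces.
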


\begin{proof}
By scaling, it suffices to treat the case $j = 0$, $\beta = 0$. Consider thus the symbol $\psi \mu$. It is supported on a compact set. First consider the region inside this
compact set where none of $\xi$, $\eta$, $\xi-\eta$ vanish. Then $\psi \mu$ is smooth, and the proposition is clear. We are left with the three regions where
$|\eta|\ll|\xi|$, $|\xi|\ll|\eta|$, and $|\xi-\eta|\ll|\eta|$. By a duality argument and symmetry, it suffices to treat one of these three cases, say $|\eta|\ll|\xi|$.
Then $m$ can be written
$$
|\eta|^c \mathcal{A}\left( |\eta|,\frac{\eta}{|\eta|} ,\xi \right).
$$
Expand $\mathcal{A}$ in Taylor series in its first argument, and in Fourier series in its second argument $\frac{\eta}{|\eta|} \in \mathbb{S}^1$
(we omit the necessary cut-off function to alleviate the notations)
$$
\mathcal{A}\left( |\eta|,\frac{\eta}{|\eta|} ,\xi \right) = \sum_{k\in\mathbb{Z}} \sum_{\ell=0}^M |\eta|^{c+\ell} e^{i k \frac{\eta}{|\eta|}} \mathcal{A}_{k\ell} (\xi) + R_M.
$$
(notice that the smoothness of $\mathcal{A}$ entails fast decay of the $\mathcal{A}_{k \ell}$ in $k,\ell$). 
Taking $M$ sufficiently large, $R_M$ is sufficiently smooth for
the theory of Coifman-Meyer to apply, and we are left with $\sum_{\ell=0}^M$. We might as well consider only the first summand, $\ell = 0$, the other being treated similarly.
Thus we are left with
$$
\sum_{k\in\mathbb{Z}} |\eta|^{c} e^{i k \frac{\eta}{|\eta|}} \mathcal{A}_{k0} (\xi).
$$
Multiply this symbol by $\psi(\xi,\eta)$, and call the result 
$\rho(\xi,\eta) \overset{def}{=} \psi(\xi,\eta) \sum_{k\in\mathbb{Z}} |\eta|^{c} e^{i k \frac{\eta}{|\eta|}} \mathcal{A}_{k0} (\xi)$. Then
$$
T_\rho (f,g) = \sum_{m<0} T_{\rho}(P_m f,g) =  
\sum_{m<0} \sum_{k\in\mathbb{Z}} T_{\psi(\xi,\eta)\mathcal{A}_{k0}(\xi)} (\Lambda^c e^{i k \frac{D}{\Lambda}} P_m f,P_1g).
$$
Now by standard linear theory there exists $C$ such that
$$
\left\| \Lambda^c e^{i k \frac{D}{\Lambda}} P_m f \right\|_{p} \lesssim 2^{mc} k^C \|f\|_p
$$
valid for any $p$ in $[1,\infty]$. Summing over $m$ is possible since $c>0$; and the fast decay of the $\mathcal{A}_{k0}$ makes the sum over $k$ converge. This finishes the
proof.
\end{proof}

A simple consequence of the previous Proposition is the following 

\begin{cor} \label{Bili-prop1}
Let $m \in \mathcal{M}^{\beta,c_1,c_2,c_3}$ and assume that $\sigma_2$, $\sigma_3$, $q$, $r$, $Q$, $R$ satisfy
$$
c_1>0, \quad \sigma_2 < c_2, \quad \sigma_3 < c_3, \quad \mbox{and}\quad \frac{1}{q} + \frac{1}{r} = \frac{1}{Q} + \frac{1}{R} = \frac{1}{p}.
$$
Then for any $\kappa > 0$,
$$
\left\| T_m(f,g) \right\|_{L^p} \lesssim \left\|f\right\|_{\dot{W}^{\sigma_2,q}} \left\| \left[\Lambda^\kappa + \Lambda^{-\kappa} \right] 
\Lambda^{\beta-\sigma_2} g\right\|_{L^r} + \left\| \left[\Lambda^\kappa + \Lambda^{-\kappa} \right] \Lambda^{\beta-\sigma_3} f\right\|_{L^Q} 
\left\|g\right\|_{\dot{W}^{\sigma_3,R}}.
$$
\end{cor}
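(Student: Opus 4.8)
The estimate will follow from Proposition~\ref{Bili-prop} after two preparatory moves: separating the symbol into a ``no small $g$-frequency'' part and a ``small $g$-frequency'' part, and extracting a fractional power from the symbol in each part so that Proposition~\ref{Bili-prop} becomes applicable. First I would fix a function $\chi(\xi,\eta)$, smooth and homogeneous of degree $0$, equal to $1$ on $\{|\xi-\eta|\le\tfrac1{100}|\eta|\}$ and supported in $\{|\xi-\eta|\le\tfrac1{50}|\eta|\}$; on $\operatorname{supp}\chi$ one automatically has $|\xi|\sim|\eta|$, so this cut-off isolates precisely the degenerate direction governed by $c_3$. Then write
\[
T_m(f,g)=T_{(1-\chi)m}(f,g)+T_{\chi m}(f,g),
\]
and handle the two pieces by symmetric arguments, producing respectively the first and the second term on the right-hand side.

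For $T_{(1-\chi)m}$ I would extract $\sigma_2$ derivatives from the ``$\eta$-slot'': set $m':=|\eta|^{-\sigma_2}(1-\chi)m$, so that $(1-\chi)m=|\eta|^{\sigma_2}m'$ and $T_{(1-\chi)m}(f,g)=T_{m'}(\Lambda^{\sigma_2}f,g)$. The symbol $m'$ is homogeneous of degree $\beta-\sigma_2$ and smooth off $\{\xi=0\}\cup\{\eta=0\}\cup\{\xi=\eta\}$; near $\{\xi=0\}$ it retains the factor $|\xi|^{c_1}$ ($c_1>0$), near $\{\eta=0\}$ it acquires the factor $|\eta|^{c_2-\sigma_2}$ ($c_2-\sigma_2>0$ since $\sigma_2<c_2$), and near $\{\xi=\eta\}$ it vanishes identically (because $1-\chi$ does), so $m'\in\mathcal{M}^{\beta-\sigma_2,c_1,c_2-\sigma_2,c_3'}$ for any $c_3'>0$, and Proposition~\ref{Bili-prop} applies to every dyadic piece $\psi(\cdot/2^j)m'$. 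Moreover, on $\operatorname{supp}(1-\chi)$ one has $|\xi-\eta|\sim\max(|\xi|,|\eta|)$, so on $\operatorname{supp}\psi(\cdot/2^j)m'$ the second input is frequency-localized at $2^j$; replacing $g$ by a fattened Littlewood--Paley projector $\widetilde P_jg$ and applying Proposition~\ref{Bili-prop} with exponents $q,r$ gives
\[
\bigl\|T_{\psi(\cdot/2^j)m'}(\Lambda^{\sigma_2}f,\widetilde P_jg)\bigr\|_p
\lesssim 2^{(\beta-\sigma_2)j}\,\|\Lambda^{\sigma_2}f\|_q\,\|\widetilde P_jg\|_r
=\|f\|_{\dot{W}^{\sigma_2,q}}\cdot 2^{(\beta-\sigma_2)j}\|\widetilde P_jg\|_r.
\]
Since $\widetilde P_j$ is bounded on $L^r$ and commutes with powers of $\Lambda$, one has $2^{(\beta-\sigma_2)j}\|\widetilde P_jg\|_r\lesssim\|\widetilde P_j\Lambda^{\beta-\sigma_2}g\|_r\lesssim 2^{-\kappa|j|}\bigl\|[\Lambda^\kappa+\Lambda^{-\kappa}]\Lambda^{\beta-\sigma_2}g\bigr\|_r$, and summing the geometric series over $j\in\mathbb Z$ bounds $\|T_{(1-\chi)m}(f,g)\|_p$ by the first term on the right-hand side.

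The piece $T_{\chi m}$ is treated identically with $\eta$ and $\xi-\eta$ exchanged: put $m'':=|\xi-\eta|^{-\sigma_3}\chi m$, so $T_{\chi m}(f,g)=T_{m''}(f,\Lambda^{\sigma_3}g)$. Because $\chi$ is supported where $|\xi|\sim|\eta|$, the symbol $m''$ vanishes near $\{\xi=0\}$ and $\{\eta=0\}$ and carries the factor $|\xi-\eta|^{c_3-\sigma_3}$ near $\{\xi=\eta\}$ with $c_3-\sigma_3>0$, so it too is admissible for Proposition~\ref{Bili-prop}. On $\operatorname{supp}\chi$ the first input is frequency-localized at the scale of $\operatorname{supp}\psi(\cdot/2^j)m''$, so after decomposing dyadically, inserting $\widetilde P_j$ on $f$, applying Proposition~\ref{Bili-prop} with exponents $Q,R$, and summing via the same $\Lambda^{\pm\kappa}$ device, one gets $\|T_{\chi m}(f,g)\|_p\lesssim\bigl\|[\Lambda^\kappa+\Lambda^{-\kappa}]\Lambda^{\beta-\sigma_3}f\bigr\|_Q\,\|g\|_{\dot{W}^{\sigma_3,R}}$. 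Adding the two estimates gives the corollary.

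The only step that is not pure bookkeeping is the choice of $\chi$ together with the verification that $m'$ and $m''$ again belong to the admissible symbol classes with \emph{all} vanishing exponents strictly positive: the partition must be arranged so that dividing by $|\eta|^{\sigma_2}$ (respectively $|\xi-\eta|^{\sigma_3}$) is only ever performed on a region where the matching gain $c_2$ (respectively $c_3$) dominates $\sigma_2$ (respectively $\sigma_3$), while the support condition annihilates the other two degenerate directions outright. Everything after that --- the dyadic sum, the insertion of fattened projectors, and the $\Lambda^{\pm\kappa}$ trick making the series converge --- is routine Littlewood--Paley analysis.
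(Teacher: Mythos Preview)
Your argument is correct and follows essentially the same scheme as the paper's proof: extract a fractional power of the appropriate frequency variable so that the quotient symbol lands back in an admissible class $\mathcal{M}^{\beta',c_1',c_2',c_3'}$ with strictly positive exponents, apply Proposition~\ref{Bili-prop} on each dyadic shell, and then sum over $j$ using the $\Lambda^{\pm\kappa}$ device. The paper's version is terser: it invokes the three-region decomposition already set up in the proof of Proposition~\ref{Bili-prop} (the regions $|\eta|\ll|\xi|$, $|\xi|\ll|\eta|$, $|\xi-\eta|\ll|\eta|$), writes out only the region $|\eta|\ll|\xi|$ as representative, and leaves the reader to supply the symmetric treatment of $|\xi-\eta|\ll|\eta|$ that produces the second term involving $\sigma_3$. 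Your two-way split via the homogeneous cutoff $\chi$ accomplishes the same thing more explicitly and in fact a bit more cleanly, since on $\operatorname{supp}(1-\chi)$ the diagonal $\xi=\eta$ is killed outright (so $c_3'$ is irrelevant there) while on $\operatorname{supp}\chi$ both axes $\xi=0$ and $\eta=0$ are avoided; this spares you from tracking three separate cases. Substantively the two proofs are the same.
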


\begin{proof} As in the proof of Proposition~\ref{Bili-prop}, it suffices to treat the case where $m$ is supported in the region where $|\eta|<<|\xi|$. Then
$\frac{m(\xi,\eta)}{|\eta|^{\sigma_2}} \in \mathcal{M}^{\beta - \sigma_2,c_1,c_2-\sigma_2,c_3}$. 
Keeping the notation $\psi$ defined above, and applying Proposition~\ref{Bili-prop} one gets
$$
\left\| T_{\frac{m(\xi,\eta)}{|\eta|^{\sigma_2}} \psi \left( \frac{(\xi,\eta)}{2^j} \right) } \left( |\eta|^{\sigma_2} f , g \right) \right\|_{L^p}
\lesssim 2^{(\beta - \sigma_2)j} \left\| |\eta|^{\sigma_2} f \right\|_q \left\|P_j g \right\|_r.
$$
Summing over $j$ gives the desired result.
\end{proof}

\end{document}